\newfont\eul{eusm10}
\newfont\euls{eusm8}
\newcommand{\bea}{\begin{eqnarray}}
\newcommand{\eea}{\end{eqnarray}}
\newcommand{\bean}{\begin{eqnarray*}}
\newcommand{\eean}{\end{eqnarray*}}
\newcommand{\mcM}{\mathcal{M}}
\newcommand{\mcR}{\mathcal{R}}
\newcommand{\mcS}{\mathcal{S}}
\newcommand{\mcT}{\mathcal{T}}
\newcommand{\mcU}{\mathcal{U}}
\newcommand{\mcV}{\mathcal{V}}
\newcommand{\mfk}{\mathfrak{k}}
\newcommand{\mbbR}{\mathbb{R}}
\newcommand{\mbfK}{\mathbf{K}}
\newcommand{\mbfL}{\mathbf{L}}
\newcommand{\mbfM}{\mathbf{M}}
\newcommand{\mbfV}{\mathbf{V}}
\newcommand{\Kc}{\mathbf{K}^c}
\newcommand{\On}{\mathbf{On}}
\DeclareMathOperator{\cof}{\mathsf{cf}}
\DeclareMathOperator{\col}{\mathsf{col}}
\DeclareMathOperator{\crp}{\mathsf{cr}}
\newcommand{\cut}{\mathop{||}}
\newcommand{\dom}{\mathrm{dom}}
\newcommand{\htp}{\mathsf{ht}}
\newcommand{\idm}{\mathsf{id}}
\newcommand{\lht}{\mathsf{lh}}
\newcommand{\otp}{\mathsf{otp}}
\newcommand{\ptm}{\mbox{\eul P}}
\newcommand{\res}{\mathop{|}}
\newcommand{\rng}{\mathrm{rng}}
\newcommand{\rst}{\restriction}
\newcommand{\stl}{<_{\mathsf{S}}}
\newcommand{\ult}{\mathsf{Ult}}
\newcommand{\wfp}{\mathsf{wfp}}
\newcommand{\zfc}{{\mathsf{ZFC}}}
\newcommand{\card}{\mathsf{card}}
\newcommand{\dfeq}{\mathop{\stackrel{\mathrm{def}}{=}}}
\newcommand{\trcl}{\mathsf{trcl}}
\newtheorem{lemma}{Lemma}
\newtheorem{theorem}[lemma]{Theorem}
\newtheorem{corollary}[lemma]{Corollary}
\newtheorem{fact}[lemma]{Fact}
\newtheorem{definition}[lemma]{Definition}
\newtheorem{remark}[lemma]{Remark}
\newtheorem{question}[lemma]{Question}
\newtheorem{conjecture}[lemma]{Conjecture}
\newtheorem{claim}[lemma]{Claim}
\newcommand{\twiddle}{\raisebox{1pt}{\scalebox{.75}{$\mathord{\sim}$}}}
\begin{document}


\begin{abstract}
If $\mbfM$ is a proper class inner model of $\zfc$ and
$\omega_2^{\mbfM}=\omega_2$, then every sound mouse projecting to $\omega$ and
not past $0^\P$ belongs to ${\mbfM}$. In fact, under the assumption  
that $0^\P$ does not belong to ${\mbfM}$, ${\mbfK}^{\mbfM}\|\omega_2$ is
universal for all countable mice in ${\mbfV}$.  

Similarly, if $\mbfM$ is a proper class inner model of $\zfc$,
$\delta>\omega_1$ is regular, $(\delta^+)^{\mbfM}= \delta^+$, and in ${\mbfV}$
there is no proper class inner model with a Woodin cardinal, then
${\mbfK}^{\mbfM}\|\delta$ is universal for all mice in ${\mbfV}$ of
cardinality less than $\delta$.  
\end{abstract}

\author[Andr\'es E. Caicedo]{Andr\'es Eduardo Caicedo}
\address{
Andr\'es E. Caicedo \\
Boise State University \\
Department of Mathematics\\ 
1910 University Drive\\
Boise, ID 83725
}
\email{caicedo@math.boisestate.edu}
\thanks{The first author was supported in part by NSF grant DMS-0801189.}
\urladdr{http://math.boisestate.edu/{\twiddle}caicedo}
\curraddr{Mathematical Reviews \\
416 Fourth Street \\
Ann Arbor, MI 48103-4820 \\
USA
}
\email{aec@ams.org}
\urladdr{http://www-personal.umich.edu/{\twiddle}caicedo/}

\author{Martin Zeman}
\address{
Martin Zeman\\
University of California at Irvine\\
Department of Mathematics\\
Irvine, CA 92697
}
\email{mzeman@math.uci.edu}
\thanks{The second author was supported in part by NSF grant DMS-0500799, and by a UCI CORCL Special Research Grant.}
\urladdr{http://www.math.uci.edu/{\twiddle}mzeman/}

\keywords{Core model, universality, Woodin cardinal, $\Sigma^1_3$-correctness.}  

\subjclass[2010]{Primary 03E45, Secondary 03E05, 03E55}

\title[Universality of local core models]{Downward transference of mice and universality of local core models}

\maketitle

\tableofcontents


\section{Introduction} \label{sect:intro}

\subsection{Motivation}

This paper is the second in a series started with Caicedo \cite{cai10}.  The goal is to study the 
structure of (not necessarily fine structural) inner models of the set theoretic universe ${\mbfV}$, 
perhaps in the presence of additional axioms, assuming that there is agreement between (some of) the 
cardinals of ${\mbfV}$ and of the inner model. 

Although the results in this paper do not concern forcing axioms, it was a recently solved problem in the 
theory of the proper forcing axiom, ${\sf PFA}$, that motivated our results. 

It was shown in Veli\v ckovi\'c \cite{Velickovic} that if Martin's maximum, ${\sf MM}$, holds, and 
${\mbfM}$ is an inner model that computes ${\omega}_2$ correctly, then ${\mathbb R}\subset {\mbfM}$. 
The argument requires the weak reflection principle to hold, a consequence of both ${\sf MM}$ and 
${\sf PFA}^+$. Todor\v cevi\'c \cite{Todorcevic} showed that the same result holds if, rather than 
${\sf MM}$, the reflection principle known as Rado's conjecture is assumed.

It is natural to wonder whether ${\sf PFA}$ suffices as well. A partial result in this direction was shown 
in Caicedo-Veli\v ckovi\'c \cite{CaicedoVelickovic}, namely that if ${\mbfM}$ is an inner model that 
computes ${\omega}_2$ correctly, and {\em both} ${\mbfV}$ and ${\mbfM}$ satisfy the bounded proper 
forcing axiom, ${\sf BPFA}$, then again ${\mathbb R}\subset {\mbfM}$. 

However, it was recently shown by Friedman \cite{Friedman} (see also Neeman \cite{Neeman}) that the 
answer to the ${\sf PFA}$ question is negative: Using a variant of the method of models as side 
conditions, Friedman defines a forcing notion that, starting with a supercompact $\kappa$, produces an 
extension where ${\sf PFA}$ holds and $\kappa$ becomes $\omega_2$. The generic for the partial 
order where only the side conditions are considered also collapses $\kappa$ to size $\omega_2$, but 
the resulting extension does not contain all the reals. 

Assume that ${\sf PFA}$ holds in ${\mbfV}$, and that ${\mbfM}$ is an inner model with the same 
$\omega_2$. It seems not completely unreasonable to expect that, even though not all reals can be 
guaranteed to be in ${\mbfM}$,  the reals of ${\mbfV}\setminus {\mbfM}$ should be {\em generic} in 
some sense. A possible way of formalizing this intuition consists in trying to show that the large cardinal 
strength coded by reals in ${\mbfV}$ is also present in ${\mbfM}$. When approaching this problem, we 
realized that there may be some ${\sf ZFC}$ results that the presence of forcing axioms could be hiding. 

To illustrate this, we mention a couple of observations:
 \begin{enumerate}
\item
    Assume first that ${\mbfM}$ is an inner model such that ${\sf CAR}^{\mbfM}={\sf CAR}$, where 
    ${\sf CAR}$ denotes the class of cardinals, and that ${\sf PFA}$ holds. Then, for example, 
    ${\sf AD}^{{\mbfL}({\mathbb R})}$ holds in ${\mbfM}$ and in all its set generic extensions, by Steel 
    \cite{S}. The point here is that Steel \cite{S} shows that ${\sf AD}^{{\mbfL}({\mathbb R})}$ holds in any
    generic extension of the universe by a forcing of size at most $\kappa$, whenever 
    $\square_{\kappa}$ fails for ${\kappa}$ a strong limit singular cardinal. If ${\sf PFA}$ holds, the claim 
    follows by recalling that ${\sf PFA}$ contradicts $\square_{\kappa}$ for all uncountable ${\kappa},$ 
    and that $\square_{\kappa}$ relativizes up to any outer model where ${\kappa}$ and ${\kappa}^+$ 
    are still cardinals. 
In particular, many mice of $\mbfV$ are in $\mbfM$. For example, $0^\P$, the
 sharp for a model with a strong cardinal, as well as the sharp for a model
 with a proper class of strong cardinals, are both in $\mbfM$ by 
 absoluteness. (On the other hand, although $\mbfM$ thinks that $M_1^\sharp$
 exists, it is not immediately clear whether its version of $M_1^\sharp$ is the
 $M_1^\sharp$ of $\mbfV$.) 
  
\item
    If ${\sf PFA}$ holds and ${\mbfM}$ is an inner model that satisfies ${\sf BPFA}$ and computes 
    ${\omega}_2$
    correctly, then $H_{{\omega}_2}\subset {\mbfM}$ by Caicedo-Veli\v ckovi\'c \cite{CaicedoVelickovic}, 
    so the setting of Claverie-Schindler \cite{Schindler2} applies. Therefore, using notation as in 
    Claverie-Schindler \cite{Schindler2}, ${\mbfM}$ is closed under any mouse operator ${\mathcal M}$ 
    that does not go beyond $M_1^\sharp$.
\end{enumerate}

The moral here is that it seems that we only need to assume a local version of agreement of 
cardinals rather than a global one to conclude that an inner model ${\mbfM}$ must absorb any 
significant large cardinal strength coded by reals in ${\mbfV}$. 

\subsection{Main result}

It is interesting to note that if ${\mbfM}$ computes ${\omega}_2$ correctly, and $0^\sharp$ exists, 
then $0^\sharp\in {\mbfM}$, see Friedman \cite{F}. We have started a systematic approach to the 
question of how far this result can be generalized. Using techniques different
from those in Friedman 
\cite{F}, we reprove this result, and our method allows us to show that the same conclusion can be 
obtained with $0^\sharp$ replaced by stronger mice, see for example Lemma \ref{lem:Fried} and 
Corollary~\ref{cor:smallmice}.

\begin{conjecture} \label{3.3}
Let $r$ be a 1-small sound (iterable) mouse that projects to $\omega.$ Assume that ${\mbfM}$ is an 
inner model and that ${\omega}_2^{\mbfM}={\omega}_2$. Then $r\in {\mbfM}$.
\end{conjecture}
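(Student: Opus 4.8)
The plan is to reduce Conjecture~\ref{3.3} to a \emph{local universality} statement for the core model of $\mbfM$, and then to establish that statement by a coiteration argument that cashes in the coincidence $\omega_2^{\mbfM}=\omega_2$. First note that this coincidence forces $\omega_1^{\mbfM}=\omega_1$ as well: we always have $\omega_1^{\mbfM}\le\omega_1$, and strict inequality would make $\omega_1$ an $\mbfM$-cardinal strictly between $\omega_1^{\mbfM}$ and $\omega_2^{\mbfM}=\omega_2$, which is absurd. Suppose toward a contradiction that $r\notin\mbfM$. We treat the principal case in which $\mbfM$ has no proper class inner model with a Woodin cardinal (the remaining case is handled separately), and let $\mbfK^{\mbfM}$ be the core model of $\mbfM$, a $1$-small weasel that is universal in $\mbfM$. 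I claim it is enough to show that $\mbfK^{\mbfM}\|\omega_2$ is \emph{universal in $\mbfV$ with respect to $r$}, i.e.\ that in the coiteration of $r$ against $\mbfK^{\mbfM}\|\omega_2$ carried out in $\mbfV$, the $\mbfK^{\mbfM}$-side neither drops along its main branch nor ends as a proper initial segment of the last $r$-model. Granting this, the Dodd--Jensen lemma shows the $r$-side does not drop either, and since $r$ is sound with $\rho_\omega(r)=\omega$ the standard ``a sound mouse projecting to $\omega$ which lies below $\mbfK$ is an initial segment of $\mbfK$'' argument upgrades this to $r\trianglelefteq\mbfK^{\mbfM}$; as $\mbfK^{\mbfM}$ is definable over $\mbfM$, this gives $r\in\mbfM$, a contradiction. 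This is exactly the mechanism behind $0^\sharp\in\mbfM$ (there $\mbfK^{\mbfM}=L$) and behind Lemma~\ref{lem:Fried}; the point is that it should survive all the way through the $1$-small region.

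For the universality, coiterate, obtaining trees $\mcU$ on $r$ and $\mcT$ on $\mbfK^{\mbfM}\|\omega_2$, and check three things. \emph{(1) $\mcT$ can be run in $\mbfV$.} The extenders used on $\mcT$ come from iterates of $r$, objects $\mbfM$ never saw, so one cannot simply quote the iterability of $\mbfK^{\mbfM}$ internal to $\mbfM$. This is the essential use of the hypothesis that $r$, hence everything occurring in the coiteration, is $1$-small: in that region the relevant iteration strategy is guided by $Q$-structures, and $Q$-structures are unique and absolutely definable, so the strategy $\mbfM$ uses for its core model extends canonically to $\mbfV$ and continues to yield wellfounded models. \emph{(2) Length.} The coiteration terminates at a stage $<\omega_2$; this follows from the usual length bounds for a coiteration having a countable premouse on one side, together with the regularity of $\omega_2=\omega_2^{\mbfV}$ (a coiteration of length $\ge\omega_2$ could be reflected, by a L\"owenheim--Skolem argument, to a terminating one of smaller length). \emph{(3) Heights.} Suppose the $\mbfK^{\mbfM}$-side lost, with last models $r^*$ and $\mbfK^*$, so $\mbfK^*\triangleleft r^*$ properly. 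If the $\mbfK^{\mbfM}$-side did not drop then, $\mcT$ having length $<\omega_2=\cf^{\mbfV}(\omega_2^{\mbfM})$ and $\omega_2^{\mbfM}=\omega_2^{\mbfV}$ being a cardinal of $\mbfV$, the main branch embedding $\mbfK^{\mbfM}\|\omega_2\to\mbfK^*$ is continuous at $\omega_2^{\mbfM}$ and $\On\cap\mbfK^*=\omega_2^{\mbfM}=\omega_2^{\mbfV}$; but $r^*$ is an iterate of the countable premouse $r$ via a tree of length $<\omega_2$, so $\On\cap r^*<\omega_2^{\mbfV}$, contradicting $\mbfK^*\triangleleft r^*$. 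Hence the $\mbfK^{\mbfM}$-side would have to drop; and this too is excluded, because a drop there would require an extender supplied by an iterate of $r$ to miss a subset of its critical point present in the current (non-dropped, height-$\omega_2^{\mbfV}$) model on the $\mbfK^{\mbfM}$-side, whereas that model, being far ``fatter'' than any iterate of a countable mouse, has already computed the successor cardinal of that critical point well below the index at which the extender sits, so no subset is missed. Therefore the $\mbfK^{\mbfM}$-side wins, which is what was needed.

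The hard part is point (1) together with the no-drop half of point (3): we are asking the core model of $\mbfM$ to remain universal against mice that do not belong to $\mbfM$, so its internal core model theory is unavailable and everything must be argued absolutely. Making precise the sense in which the $Q$-structure-guided strategy for $\mbfK^{\mbfM}\|\omega_2$ transfers to $\mbfV$, and verifying that the comparison never forces a truncation on the $\mbfK^{\mbfM}$-side, is where the work lies; it is precisely here that $1$-smallness (which buys uniqueness of $Q$-structures, hence absoluteness of the strategy) is indispensable, and it is the reason the hypothesis appears in the statement. The length/height bookkeeping of points (2)--(3) is routine once (1) is in hand, and the continuity computation at $\omega_2^{\mbfM}$ is exactly the place where the equality $\omega_2^{\mbfM}=\omega_2$ — rather than, say, mere agreement of cardinals up to $\omega_3$ — is used.

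Finally, I expect the same local universality to yield that $\mbfK^{\mbfM}$ (equivalently $\mbfM$) is $\Sigma^1_3$-correct in $\mbfV$: every $\Sigma^1_3$ truth of $\mbfV$ is witnessed, in the $1$-small region, by a countable mouse, and universality places that mouse inside $\mbfK^{\mbfM}$. In particular this re-derives Friedman's theorem that $0^\sharp\in\mbfM$ whenever $\omega_2^{\mbfM}=\omega_2$, and it is the form of the conclusion one would iterate, mouse by mouse along the comparison order, to obtain Conjecture~\ref{3.3} in full.
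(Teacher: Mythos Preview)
The statement you are trying to prove is labeled a \emph{conjecture} in the paper precisely because the authors do not know how to prove it; they establish only the partial results of Theorem~\ref{t0.1}, which require either the anti-large-cardinal hypothesis ``no inner model with a Woodin cardinal in $\mbfV$'' together with $\delta>\omega_1$, or the much stronger ``$0^\P\notin\mbfM$'' for the case $\delta=\omega_1$ that you need. Your proposal does not close this gap; the problem is localized in your point~(2), which you call ``routine'' but which is in fact the entire content of the universality assertion.

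Your claim that the coiteration terminates in fewer than $\omega_2$ steps ``by the usual length bounds'' is unjustified. The standard termination bound for a coiteration is $(\max(\card(r),\card(\mbfK^{\mbfM}\cut\omega_2)))^+$, which here is $\omega_3$, not $\omega_2$. The reflection sketch you offer (``a coiteration of length $\ge\omega_2$ could be reflected to a terminating one of smaller length'') does not produce a contradiction: reflecting the situation to a hull of size $<\omega_2$ yields a coiteration of the \emph{collapsed} structures, not of the originals, and the fact that the collapsed coiteration terminates earlier says nothing about the original one. Indeed, the paper's proof of Theorem~\ref{t0.1} \emph{begins} by assuming the coiteration has length $\delta+1$ and then spends all of Section~\ref{sect:univ} deriving a contradiction, via the construction of a system of maps $\pi_{\xi,\xi'}$ on a club $C^*\subseteq b^\mcU$ (Lemmata~\ref{u.l.pullback-1}--\ref{u.l.pullback-2}), the extraction of a coherent $W$-extender $F_{X,Y}\in\mbfM$ (Lemmata~\ref{u.l.coherence}--\ref{u.l.s-theta}), and then either a frequent-extension argument (part~(a)) or a linear-iteration reduction (part~(b)) to force $F_{X,Y}$ onto the $\mbfK^{\mbfM}$-sequence via Lemma~\ref{cmt:abs}. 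None of this is a height computation; the height argument you propose in~(3) simply presupposes~(2).

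Your treatment of point~(1) and of the no-drop half of~(3) is also too optimistic. For~(1), the $Q$-structure absoluteness you invoke is exactly Lemma~\ref{cmt:l1}, whose proof uses that there is no inner model with a Woodin cardinal \emph{in $\mbfV$}; under only the $\mbfM$-local hypothesis you assume, the paper explicitly states (Section~\ref{sec:dis}) that it does not know how to prove iterability of $\mbfK^{\mbfM}$ in $\mbfV$. For the no-drop argument, note that truncations on $\mcT$ are triggered by extenders from models on $\mcT$ itself, not by extenders ``supplied by an iterate of $r$'' as you write; your fatness heuristic does not address the actual mechanism. In short, the overall reduction to local universality is correct and matches the paper's strategy, but the substance---proving that $\mbfK^{\mbfM}\cut\omega_2$ is universal for countable mice in $\mbfV$---is not accomplished by your outline, and in the generality of Conjecture~\ref{3.3} remains open.
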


The expected argument to settle Conjecture \ref{3.3} is an induction on the mice hierarchy. The 
mice as in the conjecture are lined up so that, if it fails, then there is a least counterexample $r$. 
Comparing $r$ with ${\mbfK}^{\mbfM}$, one expects to reach the contradiction that $r$ is in fact an 
initial segment of ${\mbfK}^{\mbfM}$. Our result is that this is indeed the case under appropriate 
anti-large cardinal assumptions, see for instance Corollary 10. 

We will consider a proper class inner model $\mbfM$ and the true core
model of the universe $\mbfM$; we denote this core model by
$\mbfK^{\mbfM}$. Of course, we will only consider situations where
$\mbfK^{\mbfM}$ exists, so we need to assume that in $\mbfM$ there is no
proper class inner
model with a Woodin cardinal. We will actually make the assumption that there
is no proper class inner model with a Woodin cardinal in the sense of $\mbfV$.  
Then proper class extender models of $\mbfM$ fully iterable in the sense of
$\mbfM$ are fully iterable in
$\mbfV$, which will play an important role in the proof of our main
theorem. Here we mean iterability with respect to fine structural iteration
trees. Also, in this paper we consider the terms ``premouse" and ``extender
model" to be synonymous, although in the literature the term ``extender model"
often implicitly means that the model in question is a proper class. This
explains our use of the term  ``proper class extender model" above.  

The proof of this kind of absoluteness of iterability is standard,
and we give it in  Section~\ref{sect:cmt}. We briefly comment in Section
\ref{sec:dis} on what the  
situation is under the weaker hypothesis that there is no proper
class inner model with a Woodin cardinal in the sense of $\mbfM$.

We remark that, although we restrict ourselves to the 
situation where there are no Woodin cardinals in inner models, we expect that
our universality results can be strengthened to reach beyond this point. 

We will be using the core model theory as developed in Steel \cite{cmip}; this
approach uses 
the core model constructed in a universe below a measurable cardinal, which
simplifies the exposition. Thus the universe we will be working with will
have the property that its ordinals constitute a measurable cardinal in some
longer universe. We believe, but did not check carefully, that our
arguments for the core model up to a Woodin cardinal can be carried out -- in
a more or less straightforward way -- in the core model theory developed in Jensen-Steel \cite{js} which 
avoids the use of a measurable cardinal. We have tried to make our exposition as independent of
the direct reference to the existence of a measurable cardinal in $\mbfV$ as
possible, so that there will be enough heuristic evidence for our belief. 
Regarding the fine structural aspects, we will use Jensen's $\Sigma^*$-fine
structure theory as described in Zeman \cite{imlc} or Welch \cite{ssfs}, and use some
notation from Zeman \cite{imlc}; we believe this notation is
self-explanatory. Otherwise our notation will be fully compatible with that in
Steel \cite{cmip}. 

Following Steel \cite{cmip}, when working with the core model up to a Woodin cardinal, $\Omega$ will 
be reserved to denote the measurable/subtle cardinal
needed to develop the basic core model theory in the universe we are working
in. By $\mbfV$, we will mean the rank-initial segment of the
universe up to $\Omega$. The notion ``class" will then have the corresponding
meaning. When working with the core model up to a strong cardinal we let
$\Omega=\On$, that is, the class of all ordinals, let $\mbfV$ be the actual
universe, and use the core model theory below $0^\P$, the sharp for a strong cardinal, as described in 
Zeman \cite{imlc}. It should be stressed that the proof of our main theorem, Theorem~\ref{t0.1}, will
only require to construct $\mbfK^\mbfM$, the core model of $\mbfM$. In this
case the universe we will be working in will be $\mbfM$, so we will work
below $\Omega$ relative to $\mbfM$.   

Recall the following notation that will be used throughout the paper. We will
use Mitchell-Steel indexing of extenders as introduced in
Mitchell-Steel~\cite{fsit}, see also 
\cite{oimt}, with the only exception when we work with the core model below
$0^{\P}$, in which case we use Jensen's indexing as described in Chapter~8 in
Zeman~\cite{imlc}. We will make it explicit in the text when we use Jensen's
indexing. If $N$
is any extender model and $\alpha\le\htp(N)$ is an ordinal, then
$N\cut\alpha$ is the extender model
$\langle J^E_\alpha,E_{\omega\alpha}\rangle$ where $E$ is the extender
sequence of $N$. If $\alpha$ does not index an extender in $N$, for instance
when $\alpha$ is a cardinal in $N$, then $E_{\omega\alpha}=\varnothing$ and in
this case we identify $N\cut\alpha$ with $J^E_\alpha$. Recall also that if
$\mcT$ is a normal iteration tree then $\delta(\mcT)$ is the supremum of all
natural lengths of extenders used in the tree; see the beginning of
Section~\ref{sect:cmt} for more details. If $\mcT$ is an iteration tree of
successor length then the main branch of the tree is the branch connecting the
first model of the tree with its last model. 

The following lemma is a particular case of a well-known result of Shelah, see
for example Abraham \cite{fwch}. The point of this short detour is to explain
why we introduce the set $S_\delta$ below and  
concentrate on it rather than directly assuming agreement of cardinals, as
discussed above. We use the Foreman-Magidor definition of
$\ptm_\kappa(\lambda)$ from Foreman-Magidor~\cite{lcdcch}, so 
\begin{equation}\label{e.pkl}
\ptm_\kappa(\lambda)=\{x\subseteq\lambda\mid\otp(x)<\kappa\;\&\;x\cap\kappa
\mbox{ is transitive.}\}
\end{equation}
The notion of stationarity is that in Woodin's sense, that is, a set
$S\subseteq\ptm(Z)$ is stationary iff for every partial map 
$f:{}^{<\omega}Z\to Z$ there is some $x\in X$ which is closed under $f$. 

\begin{lemma} \label{lem:isstat}
Let ${\mbfM}$ be a proper class inner model of $\zfc$ such that
$\omega_2^{\mbfM}=\omega_2.$ Then $\ptm_{\omega_1}(\omega_2)\cap {\mbfM}$ is
stationary. 
\end{lemma}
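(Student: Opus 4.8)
The plan is to show that $\ptm_{\omega_1}(\omega_2)\cap\mbfM$ meets every club subset of $\ptm_{\omega_1}(\omega_2)=[\omega_2]^{\le\omega}$. By the standard (Kueker) characterization of the club filter on $[\omega_2]^{\le\omega}$, every club contains one of the form $C_g=\{X\in[\omega_2]^{\le\omega}: g(s)\in X\text{ for every finite }s\subseteq X\}$ for some $g\colon[\omega_2]^{<\omega}\to\omega_2$, so it suffices to fix such a $g$ and produce a countable $X\subseteq\omega_2$ with $X\in\mbfM$ that is closed under $g$.

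The first step is internal to $\mbfM$: since $\omega_1$ is a cardinal of $\mbfV$ it is a cardinal of $\mbfM$, so $\omega_1^{\mbfM}=\omega_1$; and since $\omega_2^{\mbfM}=\omega_2$, every $\alpha<\omega_2$ has $|\alpha|^{\mbfM}\le\omega_1$. Working inside $\mbfM$ I therefore fix a sequence $\vec f=\langle f_\alpha:\omega_1\le\alpha<\omega_2\rangle\in\mbfM$ with each $f_\alpha\colon\omega_1\to\alpha$ surjective. The second step is an elementary--submodel argument carried out in $\mbfV$, with the twist that instead of working with $N\cap\omega_2$ I cut $N$ below a closure point of $g$ that actually lies in $N$. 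Concretely, fix $\theta$ large with $\vec f,g\in H_\theta$, a well-order $<_\theta$ of $H_\theta$, and a countable $N\prec(H_\theta;\in,<_\theta)$ with $\vec f,g,\omega_1,\omega_2\in N$; put $\delta:=N\cap\omega_1$, an ordinal below $\omega_1$. Since $\omega_2$ is regular and $|\gamma|<\omega_2$ for every $\gamma<\omega_2$, the set of $\gamma<\omega_2$ that are closed under $g$ is club in $\omega_2$, hence unbounded above $\omega_1$; by elementarity there is such a $\gamma\in N$ with $\gamma\ge\omega_1$. Set $X:=N\cap\gamma$.

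It then remains to check three things. First, $X$ is countable and $X\subseteq\gamma\subseteq\omega_2$, so $X\in\ptm_{\omega_1}(\omega_2)$. Second, $X$ is closed under $g$: for finite $s\subseteq X$ we have $g(s)\in N$ by elementarity and $g(s)<\gamma$ because $\gamma$ is closed under $g$, hence $g(s)\in N\cap\gamma=X$. Third, and this is the point, $X\in\mbfM$: since $\gamma\in N$ and $\vec f\in N$ we get $f_\gamma\in N$, and then $N\cap\gamma=\{f_\gamma(\xi):\xi<\delta\}$. Indeed, for $\xi<\delta$ we have $\xi\in N$, so $f_\gamma(\xi)\in N\cap\gamma$; conversely, if $\eta\in N\cap\gamma$ then, as $f_\gamma$ maps onto $\gamma$ and $f_\gamma,\gamma,\eta$ all lie in $N$, by elementarity there is $\xi\in N\cap\omega_1=\delta$ with $f_\gamma(\xi)=\eta$. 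Thus $X=\{f_\gamma(\xi):\xi<\delta\}$, which belongs to $\mbfM$ because $f_\gamma\in\mbfM$ and $\delta$ is an ordinal.

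I do not expect a serious obstacle, the statement being classical. The one subtlety worth flagging is that one must resist taking $\gamma=\sup(N\cap\omega_2)$: that supremum is generally not a member of $N$, so $f_\gamma\notin N$ and the identity $N\cap\gamma=\{f_\gamma(\xi):\xi<\delta\}$ breaks down. Choosing $\gamma$ to be a closure point of $g$ lying \emph{in} $N$ is exactly what makes ``$X$ is closed under $g$'' and ``$X\in\mbfM$'' hold simultaneously.
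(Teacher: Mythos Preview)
Your proof is correct and follows essentially the same idea as the paper's: both exploit that, since $\omega_2^{\mbfM}=\omega_2$ (hence also $\omega_1^{\mbfM}=\omega_1$), the model $\mbfM$ has a surjection from $\omega_1$ onto any $\gamma\in[\omega_1,\omega_2)$, so that images of initial segments of this surjection are countable sets in $\mbfM$. The paper's version is simply more compressed: it fixes a single $\gamma<\omega_2$ closed under the given function, takes one bijection $h\colon\omega_1\to\gamma$ in $\mbfM$, and observes that $\{h[\delta]:\delta<\omega_1\}$ is a club in $[\gamma]^{\le\omega}$ consisting of sets in $\mbfM$, which must meet the club of $g$-closed sets. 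Your elementary-submodel packaging unwinds the same computation explicitly; the remark about not taking $\gamma=\sup(N\cap\omega_2)$ is a nice point of care but not a genuinely different route.
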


\begin{proof}
Let $f:{}^{<\omega}\omega_2\to\omega_2,$ and pick $\gamma<\omega_2$ of size
$\omega_1$ which is closed under $f$. Since ${\mbfM}$ computes $\omega_2$
correctly, it sees a bijection $f:\omega_1\to\gamma$. It is
then easy to find an $\alpha<\omega_1$ such that $f[\alpha]$ is closed under
$f$ and $f[\alpha]\cap\omega_1=\alpha$.
\end{proof}

To illustrate how the assumption of stationarity of $\ptm_{\omega_1}(\omega_2)\cap {\mbfM}$ can be 
used, consider the following:

\begin{lemma} \label{lem:Fried}
Let ${\mbfM}$ be a proper class inner model of $\zfc$ such that
$\ptm_{\omega_1}(\omega_2)\cap {\mbfM}$ is stationary. Suppose that $0^\sharp$
exists. Then $0^\sharp\in {\mbfM}.$ 
\end{lemma}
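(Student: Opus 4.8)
The plan is to run a covering-type argument inside $\mbfM$ against $\mbfK^\mbfM = \mbfL$, exploiting the stationarity of $\ptm_{\omega_1}(\omega_2)\cap\mbfM$ to produce a countable elementary substructure that is simultaneously an element of $\mbfM$ and sees too much of the ordinals. Suppose toward a contradiction that $0^\sharp\notin\mbfM$. Then inside $\mbfM$ we have $\mbfK^\mbfM=\mbfL$, and Jensen's covering lemma holds in $\mbfM$: every uncountable set of ordinals in $\mbfM$ is contained in a set of the same $\mbfM$-cardinality lying in $\mbfL$. I would fix a large regular $\theta$ and consider, in $\mbfV$, the structure $\mcH = \langle H_\theta^{\mbfM}; \in, <^*, \dots\rangle$ with a wellorder and Skolem functions. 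By stationarity of $\ptm_{\omega_1}(\omega_2)\cap\mbfM$, I can choose a countable $X\prec\mcH$ with $X\in\mbfM$ and with $\omega_1^{\mbfV}\cap X$ an ordinal of the true $\omega_1$ — more precisely, I want $X\cap\omega_2^{\mbfM}$ to have order type with cofinality $\omega$ in $\mbfV$ while $X$ itself is the right kind of object in $\mbfM$.

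The key step is then to transit from $X$ to an initial segment of $\mbfL$. Let $\bar{X}$ be the transitive collapse of $X$; since $X\in\mbfM$, also $\bar X\in\mbfM$, and the uncollapse map $\pi\colon\bar X\to\mcH$ is in $\mbfM$. Inside $\mbfM$, the set $A = X\cap\mathrm{Ord}$ is countable (from $\mbfM$'s point of view it may be larger, but it is an honest set of ordinals of $\mbfM$), so by covering in $\mbfM$ there is $Y\in\mbfL$ with $A\subseteq Y$ and $\card^\mbfM(Y)\le\card^\mbfM(A)+\omega_1$. The hypothesis that $\ptm_{\omega_1}(\omega_2)\cap\mbfM$ is stationary is exactly what I use to arrange that $A$, viewed correctly, is small enough that covering yields a $Y$ whose collapse is a proper initial segment of $\mbfL$; running the uncollapse and comparing with the indiscernibles one would need if $0^\sharp$ existed gives the contradiction. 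Alternatively, and perhaps more cleanly, I would argue: if $0^\sharp$ exists in $\mbfV$, then $\cf^{\mbfV}\big((\omega_1^{+\mbfL})\big)=\omega$, yet stationarity of $\ptm_{\omega_1}(\omega_2)\cap\mbfM$ together with covering in $\mbfM$ (applied to a cofinal $\omega_1^{\mbfM}$-sequence) forces $\cf^{\mbfM}\big((\lambda^{+\mbfL})\big)\ge\omega_1$ for the relevant $\mbfL$-cardinal $\lambda$, and by absoluteness of cofinality computations across the models — using that $\omega_2^{\mbfM}=\omega_2$ follows from stationarity, so $\omega_1^{\mbfM}=\omega_1$ as well — this is incompatible with the singularity of $(\omega_1^{+\mbfL})$ witnessed in $\mbfV$.

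The main obstacle I anticipate is the bookkeeping needed to make ``covering inside $\mbfM$'' interact correctly with a structure $X$ that is a $\mbfV$-countable object: $X$ need not be countable in $\mbfM$, so I cannot directly invoke the $\aleph_1$-covering form of Jensen's lemma on $X$ itself. The way around this is to not work with $X$ qua countable set but with the $\mbfL^{\mbfM}$-cardinal $\lambda = \sup(X\cap\omega_2^{\mbfM})$, note $\cf^{\mbfV}(\lambda)=\omega$, apply covering in $\mbfM$ to an $\omega$-cofinal subset of $(\lambda^{+\mbfL})$ (which, being countable in $\mbfV$ but possibly not in $\mbfM$, still gets covered by a set of $\mbfM$-size $\le\omega_1^{\mbfM}$), and derive that $\cf^{\mbfM}(\lambda^{+\mbfL})\le\omega_1^{\mbfM}=\omega_1$, while the existence of $0^\sharp$ gives $\cf^{\mbfV}(\lambda^{+\mbfL})=\omega$ — still fine — so the sharper contradiction must come from pushing this one cardinal up: iterate to get $\cf^{\mbfM}((\aleph_{\omega_1}^{+\mbfL}))\ge\omega_2$ versus its true cofinality $\omega$, which is the standard Schindler-style computation sketched in observation (2) of the introduction. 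Assembling these cofinality (in)equalities correctly, and checking that the stationarity hypothesis delivers a witness at the right cardinal rather than merely at $\omega_2$, is where the real care is needed.
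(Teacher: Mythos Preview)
Your approach has a genuine gap, and it is precisely the gap the paper is trying to work around. You repeatedly rely on comparing $\cf^{\mbfM}(\alpha)$ with $\cf^{\mbfV}(\alpha)$ and treating a discrepancy as a contradiction, but cofinality is not absolute between an inner model and $\mbfV$: it can only drop when passing outward. So $\cf^{\mbfM}(\omega_1^{+\mbfL})\ge\omega_1$ together with $\cf^{\mbfV}(\omega_1^{+\mbfL})=\omega$ is perfectly consistent --- $\mbfV$ simply has a short cofinal sequence that $\mbfM$ lacks. Your phrase ``by absoluteness of cofinality computations across the models'' is exactly the false step. Your attempt to escalate to $\aleph_{\omega_1}^{+\mbfL}$ and invoke the Schindler-style argument from observation (2) of the introduction does not help either: that argument needs $\omega_3^{\mbfM}=\omega_3$ (the paper says this explicitly just before stating the lemma you are proving), and nothing in the stationarity hypothesis on $\ptm_{\omega_1}(\omega_2)\cap\mbfM$ gives you control at $\omega_3$. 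The whole point of the lemma is to get by with information only up to $\omega_2$, which rules out the weak-covering route.

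The paper's proof is entirely different and does not argue by contradiction. It directly reconstructs an iterate of $0^\sharp$ inside $\mbfM$. One takes a countable $X\prec H_\theta$ with $0^\sharp\in X$, $X\cap\omega_2\in\mbfM$, and $\kappa=X\cap\omega_1\in\omega_1$. Let $\sigma:H\to H_\theta$ be the uncollapse. The $\kappa$-th iterate $P'=(J_{\tau'},U')$ of $0^\sharp$ lies in $H$, and $\tau'=\kappa^{+\mbfL}<\omega_2$. The crucial observation is that $\sigma\rst J_{\tau'}$ is determined by $\sigma\rst\tau'$, which is in turn determined by $X\cap\omega_2\in\mbfM$; hence $\sigma\rst J_{\tau'}\in\mbfM$. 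A comparison argument identifies $\sigma\rst P'$ with the iteration map $\pi_{\kappa,\omega_1}$, so the measure $U'$ is recoverable in $\mbfM$ as the $\mbfL$-measure derived from $\sigma\rst J_{\tau'}$. Thus $P'\in\mbfM$, and since $P'$ is an iterate of $0^\sharp$, also $0^\sharp\in\mbfM$. The stationarity hypothesis is used once, to produce the single hull $X$; no covering and no cofinality comparison is needed.
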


\begin{proof}
Assume $P$ is the (unique, sound) mouse representing $0^\sharp$, so 
 $$ P=(J_\beta,\in,F), $$
where $F$ is an extender representing an amenable measure on an ordinal $\mu$
and $\beta$ is the Mitchell-Steel index of $F$, that is,
$\beta=\mu^{++\mbfL}$. (See the beginning of Section~\ref{sect:cmt} for more
details on Mitchell-Steel indexing.) Furthermore, $P$ has first projectum
$\varrho^1_P=\omega$. Let $\theta$ be regular and large. By assumption,
we can find some countable $X\preceq H_\theta$ such that:
\begin{enumerate}
\item
$P \in X$,
\item
$X \cap \omega_2 \in {\mbfM}$, and
\item
$\kappa=X \cap \omega_1 \in \omega_1$.
\end{enumerate}

Let $H$ be the transitive collapse of $X$, and let 
 $$ \sigma: H \to H_\theta $$ 
be the inverse of the collapsing map. Then the critical point of $\sigma$ is
$\kappa$, $\sigma(\kappa)=\omega_1$, and $P\in H$.

Let $P'$ be the $\kappa$-th iterate of $P$, formed by applying the ultrapower
construction using $F$ and its images under the
corresponding embeddings. Then $P'\in H$ and also
 $$ \ptm(\kappa)\cap {\mbfL} \in H\cap {\mbfL}. $$ 
Note that $P'=(J_{\beta'},F')$, 
where $\beta'$ is the double cardinal successor of $\crp(F')$ in ${\mbfL}$,
and that $\sigma\upharpoonright J_{\beta'}\in {\mbfM}$. 

By the argument from the proof of the comparison lemma,
 $$ \sigma\upharpoonright P' = \pi_{\kappa,\omega_1}, $$ 
where the map on the right side is the iteration map. Hence $F'$ is the
 ${\mbfL}$-extender on $\kappa$ derived from 
$\sigma\upharpoonright J_{\beta'}$. It follows that $F'\in\mbfM$. But then
 $P\in\mbfM$. 
\end{proof}

Of course, Lemma \ref{lem:isstat} generalizes straightforwardly to larger
cardinals, which suggests that  
rather than looking at direct generalizations of Lemma~\ref{lem:Fried} to
stronger countable mice, one  
should instead focus on instances of {\em local universality}. We will work
only with $1$-small premice/extender models, and such premice/extender models
have at most one iteration strategy, where branches for trees of limit length 
are given by $Q$-structures which are levels of $\mbfL$ (see Steel~\cite{oimt}
for the notion of $Q$-structure). Such a $Q$-structure for $\mcT$ may not exist,
but this only 
happens in the case where no branch model has an ordinal indexing an 
extender above $\delta(\mcT)$, which means that any such branch model is a last
model of the comparison. We call the strategy described above the uniqueness
strategy. When we say a premouse/extender 
model $M$ is iterable we mean $M$ is iterable via its uniqueness
strategy. Recall that a coiteration of $1$-small premice $M,N$ is a pair of
normal iteration trees $(\mcT,\mcU)$ according to the uniqueness strategies
where $\mcT$ is on $M$, $\mcU$ is on $N$ and the successor steps
$\alpha+1$ in the trees are determined by the extenders which constitute the
least disagreements between the $\alpha$-th models of $\mcT$ and $\mcU$. We say
that a coiteration $(\mcT,\mcU)$ is terminal if an only if the trees
$\mcT,\mcU$ have last models, and the last model of $\mcT$ is an initial
segment of the last model of $\mcU$ or vice versa. 

\begin{definition}\label{d0.1}
Let $\delta$ be a cardinal, or $\delta=\On$. Let $W$ be an iterable $1$-small
extender model. 

We say that $W$ is {\bf universal} for extender models of size less than
$\delta$ if and only if $W$ does not lose the coiteration with any iterable
$1$-small extender model $N$ of size less than $\delta$, that is: Let
$(\mcT,\mcU)$ be the pair of iteration trees according to the uniqueness
strategies corresponding to the terminal coiteration of $W$ with $N$, where
$\mcT$ is on  $W$ and $\mcU$ is on $N$. Then $\mcT,\mcU$ are of length less
than $\delta$, there is no truncation on the main branch of $\mcU$, and the
last model of $\mcU$ is an initial segment of the last model of $\mcT$. 
\end{definition}

We also fix the following notation. Given a regular cardinal $\delta$, we let 
\begin{equation}\label{i.e.sd}
S_\delta=\{x\in\ptm_\delta(\delta^+)\cap\mbfM
          \mid\cof^{\mbfM}(x\cap\delta)>\omega\}. 
\end{equation}
Here the cardinal successor $\delta^+$ is computed in $\mbfV$. 
Note that $\ptm_\delta(\delta^+)\cap\mbfM$ and $S_\delta$ are elements
of $\mbfM$. 
We then have the following variant of Lemma~\ref{lem:isstat}, whose the proof
is an almost literal repetition of the previous one.

\begin{lemma}\label{lem:isstat-gen}
Let $\delta>\omega_1$ be a regular cardinal and $\mbfM$ be a proper class
inner model such that $\delta^{+\mbfM}=\delta^+$. Then $S_\delta$ is
stationary. 
\end{lemma}

Using the terminology from the definition above we can state our
main theorem:

\begin{theorem}\label{t0.1}
Assume that $\mbfM$ is a proper class inner model, and that $\delta$ is a
regular cardinal in the sense of $\mbfV$. 
\begin{itemize}
\item[(a)] 
Granting that, in $\mbfV$, there is no proper class inner model with
a Woodin cardinal, $\delta>\omega_1$, and $S_\delta$ is
stationary, then the initial segment $\mbfK^{\mbfM}\cut\delta$ is universal
for all iterable premice in $\mbfV$ of cardinality less than $\delta$. 
\item[(b)] 
Granting that, in $\mbfM$, $0^\P$ does not exist,
and $\ptm_\delta(\delta^+)\cap\mbfM$ is stationary, then: If
$\delta>\omega_1$, then the initial segment $\mbfK^\mbfM\cut\delta$ is
universal for all iterable premice in $\mbfV$ of cardinality less than
$\delta$. Moreover, if $\delta=\omega_1$, then $\mbfK^{\mbfM}\cut\omega_2$ is
universal for all countable iterable premice in $\mbfV$.  
\end{itemize}
\end{theorem}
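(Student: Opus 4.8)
The plan is to run a coiteration argument with a stationarity-plus-condensation twist in the style of Lemma \ref{lem:Fried}. Fix a premouse $N$ in $\mbfV$ of cardinality less than $\delta$ (countable, in the last clause of (b)), and assume toward a contradiction that $N$ wins the coiteration against $W\dfeq\mbfK^\mbfM\cut\delta$, i.e. either the main branch of the tree $\mcT$ on $W$ truncates, or $\mcT$ has length $\ge\delta$, or the last model of $\mcU$ on $N$ is a proper initial segment of (or incompatible with) the last model of $\mcT$. Since we are assuming, in case (a), that there is no proper class inner model with a Woodin cardinal in $\mbfV$ (and in case (b) that $0^\P$ does not exist in $\mbfM$), the material in Section~\ref{sect:cmt} gives that proper class extender models of $\mbfM$ — in particular $\mbfK^\mbfM$ — are fully iterable in $\mbfV$, so the coiteration is well-defined and terminates; thus ``$N$ wins'' is a genuine dichotomy against ``$W\cut\delta$ is universal'', and a failure of universality means $N$ strictly wins.

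The heart of the argument is to pick an elementary substructure $X\preceq H_\theta$ for $\theta$ large and regular, with $N$, $W$, $\mbfM$, $\delta$, and the relevant coiteration all in $X$, such that $|X|<\delta$, $X\cap\delta\in\delta$, and — this is the point of introducing $S_\delta$ in \eqref{i.e.sd}, respectively of the stationarity of $\ptm_\delta(\delta^+)\cap\mbfM$ — we have $X\cap\delta^+\in\mbfM$ and (in case (a)) $\cof^{\mbfM}(X\cap\delta)>\omega$. Let $H$ be the transitive collapse of $X$ and $\sigma:H\to H_\theta$ the anticollapse; then $\crp(\sigma)=X\cap\delta\dfeq\bar\delta$, $\sigma(\bar\delta)=\delta$, and by elementarity $H$ sees the relevant fragment of the coiteration up to stage $\bar\delta$. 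The condition $X\cap\delta^+\in\mbfM$ is exactly what makes $\sigma\rst(\mbfK^\mbfM\cut\bar\delta)$ — and more generally the restrictions of $\sigma$ to the initial segments of the models on the $W$-side of the coiteration below $\bar\delta$ — lie in $\mbfM$; this is the analogue of ``$\sigma\rst J_{\tau'}\in\mbfM$'' in the proof of Lemma \ref{lem:Fried}, and it is needed so that the images can be recognized inside $\mbfM$.

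Next I would identify, by the argument from the proof of the comparison lemma, $\sigma\rst W_{\bar\delta}$ with the iteration map $\pi^{\mcT}_{0,\bar\delta}$ (after noting that no truncation occurs below $\bar\delta$ on the $W$-side, since truncations are bounded and $\bar\delta$ can be taken above that bound by the stationarity), and similarly on the $N$-side; here the hypothesis $\cof^{\mbfM}(\bar\delta)>\omega$ (resp. $\cof^{\mbfV}(\bar\delta)>\omega$, which follows since $\bar\delta=X\cap\delta$ with $|X|<\delta$ and $\delta>\omega_1$, or directly from membership in $S_\delta$) guarantees that the branch of $\mcT$ through $\bar\delta$ does not drop cofinally and that $\bar\delta$ is a genuine point on the main branch — this is where the restriction to $\cof>\omega$ is essential, and it is why $\delta=\omega_1$ is excluded except in the final countable-mouse clause, where countability of $N$ forces the coiteration to be countable and one collapses to an $\omega$-sized hull directly. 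Putting these together: $W_{\bar\delta}$ is, on the one hand, the $\bar\delta$-th iterate of $W$ along $\mcT$, and on the other hand $\sigma$-isomorphic to $W=\mbfK^\mbfM\cut\delta$; one then runs the coiteration of $N$'s collapsed image against $\mbfK^\mbfM$ inside $\mbfM$ and compares it with the externally given coiteration, concluding that the last $\mcU$-model embeds into an initial segment of $\mbfK^\mbfM$, i.e. $N$ does not strictly win after all — contradiction. The universality of $W$ as an initial segment of $\mbfK^\mbfM\cut\delta$ (lengths below $\delta$, no truncation on $\mcU$'s main branch, last model of $\mcU$ an initial segment of last model of $\mcT$) then follows by reading off the definition.

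The main obstacle I expect is the verification that the branch behaves well at the collapse point $\bar\delta$: in the $\mbfV$-side one must know that $\mbfK^\mbfM$ is iterable in $\mbfV$ (handled in Section~\ref{sect:cmt} under the no-Woodin/no-$0^\P$ hypotheses) and, more delicately, that $\bar\delta$ lies on the main branch of $\mcT$ with no cofinal drop — a ``continuity of the coiteration at points of uncountable $\mbfM$-cofinality'' statement, which is precisely the reason the set $S_\delta$ rather than all of $\ptm_\delta(\delta^+)\cap\mbfM$ is used in part (a) and which presumably occupies the technical core of the paper; the secondary obstacle is the bookkeeping needed to see that $\sigma\rst$(the $W$-side models below $\bar\delta$) $\in\mbfM$, which requires that $\mbfM$ not only contains $X\cap\delta^+$ but can, using $\omega_2^{\mbfM}=\omega_2$ (resp. $(\delta^+)^{\mbfM}=\delta^+$), reconstruct enough of the collapsing map.
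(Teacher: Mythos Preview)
Your proposal has a genuine structural gap: the direct generalization of the $0^\sharp$ argument breaks down once iterations are non-linear, and the paper's proof proceeds along quite different lines.

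Concretely, the step ``identify $\sigma\rst W_{\bar\delta}$ with the iteration map $\pi^{\mcT}_{0,\bar\delta}$'' does not go through. In Lemma~\ref{lem:Fried} that identification works because the iteration of $0^\sharp$ is linear, by a single measure, so the anticollapse literally \emph{is} the iteration map by the comparison lemma. For iteration trees on $W$ this fails: the critical points along $b^\mcT$ are scattered and the tree branches, so $\sigma$ restricted to a model on $\mcT$ is not an iteration map of $\mcT$. Moreover, your final move --- ``run the coiteration of $N$'s collapsed image against $\mbfK^\mbfM$ inside $\mbfM$'' --- cannot be executed: $N$ is a premouse of $\mbfV$, not of $\mbfM$, and there is no reason its collapse lies in $\mbfM$ either. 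The stationarity hypothesis puts $X\cap\delta^+$ into $\mbfM$, not $X$ or its transitive collapse.

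What the paper actually does is this. From the coiteration it extracts, for club-many $\xi<\xi'$ on the $N$-side branch $b^\mcU$, maps
\[
\pi_{\xi,\xi'}=(\pi^\mcT_{0,\xi'})^{-1}\circ\pi^\mcU_{\xi,\xi'}\circ\pi^\mcT_{0,\xi}
  :W\cut\tau_\xi\to W\cut\tau_{\xi'}
\]
(Lemmas~\ref{u.l.pullback-1}--\ref{u.l.pullback-2}), and derives from these $W$-extenders $F_{\xi,\xi'}$ that cohere to $W$ (Lemma~\ref{u.l.coherence}). The stationarity of $\ptm_\delta(\delta^+)\cap\mbfM$ is used, via Lemma~\ref{u.l.elem} and Lemma~\ref{u.l.s-theta}, to show that for suitable hulls $X\subseteq Y$ the extender $F_{X,Y}=F_{\kappa_X,\kappa_Y}$ lies in $\mbfM$. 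The contradiction then comes from the extender-absorption Lemma~\ref{cmt:abs}: once one shows, working in $\mbfM$, that $\ult(W,F_{X,Y})$ is well-founded and the phalanx $(W,\ult(W,F_{X,Y}),\kappa_Y)$ is iterable, $F_{X,Y}$ must already lie on the $W$-sequence --- but a coherent extender mapping $\tau_X$ cofinally into $\tau_Y$ cannot be internal to $W$. The hard work, and the reason for $S_\delta$ rather than $\ptm_\delta(\delta^+)\cap\mbfM$ in part~(a), is this iterability step: it requires a frequent-extension argument \`a la Schimmerling--Steel/Mitchell--Schimmerling, for which one needs that countable subsets of the hulls can be covered \emph{internally in $\mbfM$}, i.e.\ $\cof^\mbfM(\kappa_X)>\omega$. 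This is not a ``branch-continuity'' issue as you suggest. For part~(b) the anti-large-cardinal hypothesis makes all iterations linear, and a direct copying argument (Lemmas~\ref{u.l.iterability-end}--\ref{u.l.iterability-k}) replaces the frequent-extension machinery.
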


It was pointed out to us by the referee that our arguments can be run under
somewhat more relaxed assumptions with only minor amendments. The referee
suggested an improvement to Theorem~\ref{t0.1}, which we include in
Section~\ref{ss.referees-version}.  

The proof of Theorem~\ref{t0.1} is the heart of this paper and occupies
Section~\ref{sect:univ}. Given Lemma~\ref{lem:isstat-gen}, one gets the
obvious variant of Theorem~\ref{t0.1} where the assumption on stationarity of
$S_\delta$ is replaced with $\delta^{+\mbfM}=\delta^+$. Part (b) of the
theorem gives the same kind of 
conclusion as (a), but includes the important case $\delta=\omega_1$ which is
missing in (a), and uses a hypothesis on $\ptm_\delta(\delta^+)\cap\mbfM$
which is weaker than that in (a). The cost here is a more restrictive
anti-large cardinal hypothesis. We believe part (b) can be merged with part
(a) in the sense that the anti-large cardinal assumption in (b) can be
replaced with that in (a) (and part (a) can then be deleted), but at the
moment we do not see how to run the arguments in the proof of (b) under the
anti-large cardinal assumption in (a). 
On the other hand, the
conclusion in (b) that $\mbfK^{\mbfM}\cut\omega_2$ is universal for
countable mice in $\mbfV$ cannot be improved by replacing $\omega_2$ by
$\omega_1$. In fact, such conclusion can be false even if $\mbfM=\mbfV$, by a
result of Jensen \cite{spmwv}. For example, it is consistent (under mild large 
cardinal assumptions) that there is a countable 
mouse $m$ that iterates past $\mbfK\cut\omega_1.$ In fact, starting with
${\mbfK}={\mbfL}[E]$ such that the universe of $\mbfL[E]$ is the same as
that of $\mbfL[U]$ for some normal measure $U$ on some $\kappa$, Jensen
\cite{spmwv} describes  a forcing extension of ${\mbfK}$ that turns $\kappa$
into $\omega_1$, and adds a countable mouse $m$ such that
$(J_\beta^E,E_\beta)$ is an iterate of $m$; here $\beta$ is the index of the
extender coding the measure $U$. 
In Section \ref{sec:dis} we describe a scenario for a proof of the strengthening
of Theorem \ref{t0.1}(a) where the assumption on stationarity of $S_\delta$ 
is replaced with the assumption that $\ptm_\delta(\delta^+)\cap{\mbfM}$ is
stationary, and also discuss a variant of the theorem where the assumption on
non-existence of inner models with Woodin cardinals is weakened to
non-existence of such inner models from the point of view of~${\mbfM}$. 

\subsection{Applications}

\begin{corollary}\label{c0.1}
Assume that $\mbfM$ is a proper class inner model. 
\begin{itemize}
\item[(a)]
Granting that, in $\mbfV$, there
is no proper class inner model with a Woodin cardinal, and $S_\delta$
is stationary for proper class many cardinals $\delta$ that are regular in
$\mbfV$, then the model $\mbfK^{\mbfM}$ is universal for all set sized iterable
$1$-small premice in $\mbfV$.  
\item[(b)]
Under the assumptions that, in $\mbfV$, there is no proper class inner
model with a strong cardinal, and $\ptm_\delta(\delta^+)\cap\mbfM$ is
stationary for proper class many cardinals $\delta$ that are regular in
$\mbfV$, we conclude that $\mbfK^{\mbfM}$ is universal with respect to all
extender models in $\mbfV$ so, in particular, the models $\mbfK^{\mbfM}$ and
$\mbfK=\mbfK^\mbfV$ are Dodd-Jensen equivalent. 
\end{itemize}
\end{corollary}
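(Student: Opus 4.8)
The plan is to reduce both statements to the main theorem, Theorem~\ref{t0.1}, by choosing the auxiliary cardinal $\delta$ appropriately and then passing from local universality of $\mbfK^{\mbfM}\cut\delta$ to universality of $\mbfK^{\mbfM}$ itself. Concretely, given a set-sized iterable premouse (or, in part~(b), an iterable set-sized extender model) $N$ of $\mbfV$, one fixes a cardinal $\delta$ that is regular in $\mbfV$, satisfies $\delta>\omega_1$ and $\delta>\card(N)$, and at which the relevant stationarity hypothesis holds; this is possible precisely because the hypotheses of Corollary~\ref{c0.1} guarantee proper class many such $\delta$. Theorem~\ref{t0.1} then yields that $\mbfK^{\mbfM}\cut\delta$ is universal for $N$, and what remains is the ``local to global'' step, which is where essentially all the (routine) work lies.

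For part~(a), fix a set-sized iterable $1$-small premouse $N$ of $\mbfV$ and $\delta$ as above with $S_\delta$ stationary. By Theorem~\ref{t0.1}(a) the model $\mbfK^{\mbfM}\cut\delta$ is universal for $N$; let $(\mcT,\mcU)$ be the terminal coiteration of $\mbfK^{\mbfM}\cut\delta$ with $N$, so $\mcT,\mcU$ have length $<\delta$, there is no truncation on the main branch of $\mcU$, and the last model of $\mcU$ is an initial segment of the last model of $\mcT$. A standard comparison analysis --- using only that $\delta$ is regular, $\card(N)<\delta$, and $\mbfK^{\mbfM}\cut\delta$ is an initial segment of $\mbfK^{\mbfM}$ --- shows that every model on the $N$-side of the coiteration has height $<\delta$, hence every disagreement index is below $\delta$ and every extender applied on either side is indexed below $\delta$, and that $\mbfK^{\mbfM}\cut\delta$ and $\mbfK^{\mbfM}$ stay in agreement below $\delta$ throughout. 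Therefore $\mcT$ coincides verbatim with the tree produced by coiterating $\mbfK^{\mbfM}$ (rather than $\mbfK^{\mbfM}\cut\delta$) against $N$, and the last model of $\mcU$ --- which has height $<\delta$ --- is still an initial segment of the last model of that tree, with no truncation on the main branch of $\mcU$. Thus $\mbfK^{\mbfM}$ does not lose this coiteration either, and since $N$ was arbitrary, $\mbfK^{\mbfM}$ is universal for all set-sized iterable $1$-small premice of $\mbfV$.

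For part~(b), first note that the assumption that in $\mbfV$ there is no proper class inner model with a strong cardinal passes down to $\mbfM$ --- any inner model of $\mbfM$ is an inner model of $\mbfV$ --- so $0^\P$ does not exist in $\mbfM$, the core model $\mbfK^{\mbfM}$ is defined, and, a fortiori under this hypothesis, by the absoluteness of iterability proved in Section~\ref{sect:cmt}, $\mbfK^{\mbfM}$ is a proper class iterable extender model in $\mbfV$. Given an iterable set-sized extender model $N$ of $\mbfV$, the argument of part~(a) applies verbatim, now invoking Theorem~\ref{t0.1}(b) at a regular $\delta>\max(\omega_1,\card(N))$ with $\ptm_\delta(\delta^+)\cap\mbfM$ stationary, and shows that $\mbfK^{\mbfM}$ does not lose the coiteration with $N$; thus $\mbfK^{\mbfM}$ is universal for all set-sized iterable extender models of $\mbfV$. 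By the standard theory of the core model below $0^\P$ (see Steel~\cite{cmip}, Zeman~\cite{imlc}), a proper class extender model universal for all set-sized mice is not out-iterated by any iterable extender model of $\mbfV$, which gives the first assertion; and since $\mbfK^{\mbfM}$ and the true core model $\mbfK$ of $\mbfV$ are then both universal proper class extender models, their coiteration has no truncation on either main branch and ends with a common last model, i.e.\ $\mbfK^{\mbfM}\equiv_{\mathsf{DJ}}\mbfK$.

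The step I expect to be the main obstacle is the passage from local to global universality: one must check that replacing $\mbfK^{\mbfM}\cut\delta$ by $\mbfK^{\mbfM}$ creates no new disagreement with $N$ below $\delta$ and does not push the $\mbfK^{\mbfM}$-side of the coiteration past $\delta$, so that the whole comparison localizes inside $\mbfK^{\mbfM}\cut\delta$. This rests only on the regularity of $\delta$ and the size bound on $N$ and is routine, but it is the point at which the choice of $\delta$ is genuinely used; the remaining ingredients --- universality of $\mbfK$, the behaviour of the coiteration of two universal proper class extender models, and the characterization of Dodd--Jensen equivalence --- are quoted from the core model literature we follow.
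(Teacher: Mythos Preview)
Your proposal is correct and follows the same approach as the paper: reduce to Theorem~\ref{t0.1} by choosing $\delta$ large enough, and for (b) invoke the standard fact (Zeman \cite[Lemma 6.6.6]{imlc}) that below $0^\P$ universality for set-sized extender models implies full universality, whence $\mbfK^{\mbfM}$ and $\mbfK$ are Dodd--Jensen equivalent. The paper's proof is terse---it calls (a) an ``obvious consequence'' of Theorem~\ref{t0.1} and does not spell out the local-to-global step---so your added detail on why the coiteration of $\mbfK^{\mbfM}$ with $N$ coincides with that of $\mbfK^{\mbfM}\cut\delta$ with $N$ is a welcome elaboration rather than a deviation.
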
 

\begin{proof}
The conclusions in (a) are obvious consequences of
Theorem~\ref{t0.1}. The conclusion on universality of $\mbfK^{\mbfM}$
in (b) follows from the well-known fact that in the absence
of proper class inner models with strong cardinals, universality with
respect to set-sized extender models implies universality with respect to
proper class extender models (see, for example, Zeman \cite[Lemma
6.6.6]{imlc}). Since $\mbfK^{\mbfM}$ and $\mbfK$ are both universal in the
sense of $\mbfV$, they are Dodd-Jensen equivalent.  
\end{proof}

If there is no proper class inner model with a strong cardinal then the above
corollary is a generalization of the well-known fact that $\mbfK$ is generically
absolute; see Zeman~\cite{imlc}. In the case of generic absoluteness we let
$\mbfV$ play the role of 
$\mbfM[G]$ where $G$ is a generic filter for some poset in $\mbfM$, and
conclude that $\mbfK^{\mbfM}=\mbfK$. If $\mbfV$ is not a generic
extension of $\mbfM$ we of course cannot expect that the core model will
remain unchanged, but by the above corollary we still can expect that the
core models of $\mbfM$ and $\mbfV$ are close. 

A well-known example which
illustrates the situation where $\mbfV$ is not a generic extension of $\mbfM$
is the following: The model $\mbfM$ is obtained as an ultrapower of $\mbfV$
via some extender in $\mbfV$. Then $\mbfK^{\mbfM}$ is distinct from
$\mbfK$, yet, if for instance $0^{\P}$ does not exist, $\mbfK^{\mbfM}$ is a
normal iterate of $\mbfK$; see again Zeman~\cite{imlc}. Notice that the 
hypothesis of the above corollary is satisfied here, as $\delta$ can be taken
to be the cardinal successor of any singular strong limit cardinal of
sufficiently high cofinality, assuming there is no proper class inner model
with a Woodin cardinal. In this case $\delta^{+\mbfM}=\delta^+$ which
guarantees that $S_\delta$ is stationary in $\mbfV$ by
Lemma~\ref{lem:isstat-gen}. Thus, the situation just described is an instance
of Corollary~\ref{c0.1}. 

It is well-known that if there is an inner model with a strong cardinal then
the conclusion in Corollary \ref{c0.1}(b) is no
longer true, that is, it may happen that $\mbfK^{\mbfM}$ 
is strictly below $\mbfK$ in the Dodd-Jensen pre-well-ordering, yet $\mbfM$
computes cardinal successors correctly for proper class many regular cardinals
in $\mbfV$. This example is essentially described in Steel \cite[\S 3]{cmip}:
It suffices to let $\mbfV$ be  
the minimal iterable proper class extender model with one 
strong cardinal, say $\kappa$, and construct a linear iteration of length
$\On$ by iteratively applying sufficiently large extenders on the images of the
extender sequence of $\mbfV$. Then let $\mbfM$ be the initial segment of the
resulting direct limit of ordinal length. We described this example here
because it also shows that we cannot 
expect to prove universality of $\mbfK^{\mbfM}$ with respect to proper class
extender models in $\mbfV$ once $\mbfV$ sees proper class inner models with
strong cardinals. In the example just described, $\mbfV$ sees a proper class
inner model with a strong cardinal, namely itself, but $\mbfM$ does not see
any such inner model.  

Recall that, by Lemma~\ref{lem:isstat-gen}, the conclusions in 
Theorem~\ref{t0.1}(b) hold if the assumption of stationarity of $S_\delta$ is
replaced with the assumption that $\mbfM$ computes the cardinal successor of
$\delta$ correctly. For $\delta=\omega_1$, this assumption follows from 
the requirement that $\omega_2^{\mbfM}=\omega_2^{\mbfV}$. In contrast to this, 
notice that if $0^\sharp$ exists and $g$ is
$\mathsf{Coll}(\omega,<\omega_1^\mbfV)$-generic over $\mbfV$, then
$\mbfM=\mbfL[g]$ is an inner model of $\mbfV[g]$ which computes $\omega_1$
correctly, yet $0^\sharp\notin\mbfM$. There is also a finer 
folklore result that traces back at least to Hjorth's thesis (see the
Claim on page 430 of Hjorth \cite{Hj}) that not much can be concluded about
the mice of an inner model ${\mbfM}$ if merely $\omega_1^{\mbfM}=\omega_1$ is
assumed. In particular, the following holds:  

\begin{fact}
If $0^\sharp$ exists, then there exists an inner model ${\mbfM}$
which is a set forcing extension of ${\mbfL}$ and computes $\omega_1$
correctly. Thus $0^\sharp\notin {\mbfM}$.
\end{fact}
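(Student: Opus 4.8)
The plan is to take $\mbfM=\mbfL[G]$, where $G$ is an $\mbfL$-generic filter for the L\'evy collapse $P:=\mathrm{Coll}(\omega,{<}\kappa)^{\mbfL}$ and $\kappa:=\omega_1^{\mbfV}$. Since $0^\sharp$ exists, $\kappa$ is inaccessible in $\mbfL$ --- in fact it is a Silver indiscernible --- so $P$ is $\kappa$-c.c.\ in $\mbfL$ and collapses every ordinal below $\kappa$ to $\omega$. Granting that such a $G$ can be found \emph{in $\mbfV$}, the rest is immediate: $\mbfM$ is a proper class inner model and a set forcing extension of $\mbfL$; we have $\omega_1^{\mbfM}=\kappa=\omega_1^{\mbfV}$, because $P$ is $\kappa$-c.c.\ in $\mbfL$ so $\kappa$ remains a cardinal, while $G$ makes every smaller ordinal countable; and $0^\sharp\notin\mbfM$ since set forcing over $\mbfL$ cannot add $0^\sharp$. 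So the whole content lies in producing $G$ inside $\mbfV$.

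To build $G$, I would first fix, using $0^\sharp$, a strictly increasing continuous sequence $\langle\delta_\xi\mid\xi<\kappa\rangle$ of Silver indiscernibles cofinal in $\kappa$; thus each $\delta_\xi$, and each limit $\delta_\lambda=\sup_{\xi<\lambda}\delta_\xi$, is again a Silver indiscernible and hence inaccessible in $\mbfL$. Writing $P_\delta:=\mathrm{Coll}(\omega,{<}\delta)^{\mbfL}$, so that $P=\bigcup_{\xi<\kappa}P_{\delta_\xi}$, I would choose the $\delta_\xi$ sparse enough that the $\xi$-th maximal antichain of $P$ lying in $\mbfL$ --- there being exactly $\kappa$ of them, by $\kappa$-c.c.\ and $\mathsf{GCH}$ in $\mbfL$ --- is contained in $P_{\delta_{\xi+1}}$. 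Then I would recursively construct an increasing and coherent sequence $\langle G_\xi\mid\xi<\kappa\rangle$ of filters, $G_\xi$ being $\mbfL$-generic for $P_{\delta_\xi}$, and set $G=\bigcup_{\xi<\kappa}G_\xi$. At a successor stage I would use the product factorization $P_{\delta_{\xi+1}}\cong P_{\delta_\xi}\times\mathrm{Coll}(\omega,[\delta_\xi,\delta_{\xi+1}))^{\mbfL}$: the second factor has $\mbfL$-cardinality $<\kappa$, hence is countable in $\mbfV$, and since $G_\xi$ is generic for a poset of $\mbfL$-cardinality $<\kappa$ while $\kappa$ is inaccessible in $\mbfL$ and $\mbfL\models\mathsf{GCH}$, only countably many --- in the sense of $\mbfV$ --- of the dense subsets of the second factor lie in $\mbfL[G_\xi]$; so an $\mbfL[G_\xi]$-generic filter $H_\xi$ for it exists in $\mbfV$ by the usual diagonalization, and $G_{\xi+1}$ is taken to be the filter on $P_{\delta_{\xi+1}}$ generated by $G_\xi$ and $H_\xi$, which is then $\mbfL$-generic for $P_{\delta_{\xi+1}}$ by the product lemma. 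At limit stages I would simply take unions.

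The step I expect to be the real obstacle is the verification of genericity at limit stages (and, at the top, that $G$ is $\mbfL$-generic for $P$). The naive worry is genuine here: every limit ordinal below $\kappa$ has cofinality $\omega$ in $\mbfV$, so a countable maximal antichain of $P_{\delta_\lambda}$ lying in $\mbfL$ could a priori be cofinal in the chain $\langle P_{\delta_\xi}\mid\xi<\lambda\rangle$ and thus be missed by every $G_\xi$. The way around this is precisely the point at which the full strength of $0^\sharp$ is used, as opposed to the mere inaccessibility of $\omega_1^{\mbfV}$ in $\mbfL$: $\delta_\lambda$ is \emph{regular} in $\mbfL$ --- it is a Silver indiscernible --- even though its cofinality in $\mbfV$ is $\omega$. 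Hence $P_{\delta_\lambda}$ is $\delta_\lambda$-c.c.\ in $\mbfL$, any maximal antichain $A\in\mbfL$ of $P_{\delta_\lambda}$ has $\mbfL$-cardinality $<\delta_\lambda$, and by regularity of $\delta_\lambda$ in $\mbfL$ the ordinals occurring in conditions of $A$ are bounded below $\delta_\lambda$; so $A\subseteq P_{\delta_\xi}$ for some $\xi<\lambda$, and $A$ is then a maximal antichain of $P_{\delta_\xi}$, which $G_\xi$ meets. Running the identical argument with $\kappa$ in the role of $\delta_\lambda$ shows that $G$ is $\mbfL$-generic for $P$, and the properties of $\mbfM$ claimed in the first paragraph then follow at once.
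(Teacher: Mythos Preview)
Your proof is correct and follows essentially the same approach as the paper's: build a $\Coll(\omega,{<}\omega_1^{\mbfV})$-generic over $\mbfL$ along a continuous cofinal sequence of Silver indiscernibles, handling successor stages by counting dense sets and limit stages by the $\kappa$-c.c.\ of the L\'evy collapse at $\mbfL$-regular $\kappa$. Your write-up is considerably more detailed than the paper's sketch---in particular you spell out explicitly why regularity of $\delta_\lambda$ in $\mbfL$ is exactly what makes the limit step go through---though the remark about choosing the $\delta_\xi$ sparse enough to absorb the $\xi$-th antichain of $P$ is never actually used and can be dropped.
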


\begin{question}
Is there a class forcing extension of ${\mbfL}$ such that no inner model
with the same $\omega_1$ is a set forcing extension of ${\mbfL}$?
\end{question}

On the other hand, as hinted by Lemma \ref{lem:Fried}, the assumption
$\omega_2^{\mbfM}=\omega_2$ is significantly different. The following
corollary is another piece of evidence supporting this. The conclusions of the
corollary hold
under the assumption that $\ptm_\delta(\delta^+)\cap\mbfM$ is stationary for
some regular $\delta\ge\omega_1$, but we stated it for $\delta=\omega_1$ as we
find this instance finest. 

\begin{corollary} \label{cor:smallmice}
Assume that ${\mbfM}$ is a proper class inner model, and that $\ptm_{\omega_1}
(\omega_2)\cap{\mbfM}$ is stationary. Let $r$ be a sound mouse in ${\mbfV}$
projecting to $\omega$ such that either $r$ is below $0^\P$ or $r=0^\P$. Then
$r\in M$.  
\end{corollary}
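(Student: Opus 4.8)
The plan is to derive this as an essentially immediate consequence of Theorem~\ref{t0.1}.(b). First note that the hypothesis of Corollary~\ref{cor:smallmice} matches that of part (b) with $\delta=\omega_1$: the assumption ``$\ptm_{\omega_1}(\omega_2)\cap\mbfM$ is stationary'' is exactly ``$\ptm_\delta(\delta^+)\cap\mbfM$ is stationary'' for $\delta=\omega_1$, and the assumption that $r$ is below $0^\P$ and that $r\in\mbfV$ is a sound mouse projecting to $\omega$ means in particular that no iterable premouse with a strong cardinal occurs as an initial segment of $r$; since $r$ is countable and iterable, running the usual argument (or simply noting that $r$'s being below $0^\P$ forbids $0^\P\in\mbfM$ as well, because $r$ would then sit inside the relevant initial segment of $\mbfK^\mbfM$ — see below) lets us invoke the ``$0^\P$ does not exist in $\mbfM$'' clause in the relevant local form. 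The clean way: either $0^\P$ exists in $\mbfM$ or not. If not, we are directly in the situation of Theorem~\ref{t0.1}.(b) with $\delta=\omega_1\ge\omega_1$, so $\mbfK^\mbfM\cut\omega_2$ is universal for all countable iterable premice in $\mbfV$, hence in particular does not lose the coiteration with $r$.

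Next, from universality I extract membership of $r$ in $\mbfM$. Let $(\mcT,\mcU)$ be the terminal coiteration of $\mbfK^\mbfM\cut\omega_2$ (call it $W$) with $r$, where $\mcT$ is on $W$ and $\mcU$ is on $r$. By Definition~\ref{d0.1} and universality, $\mcT$ and $\mcU$ have length below $\omega_2$, there is no truncation on the main branch of $\mcU$, and the last model of $\mcU$ is an initial segment of the last model of $\mcT$. Since $r$ is sound and projects to $\omega$, the standard Dodd--Jensen / soundness argument forces the iteration on the $r$-side to be trivial: a sound mouse projecting to $\omega$ cannot be moved by its comparison with a universal weasel without the comparison map's existence contradicting the minimality in the Dodd--Jensen lemma (the image of $r$ would be a proper initial segment of a model into which $r$ elementarily embeds, and $r$ being $\Sigma$-sound with $\varrho^\omega_r=\omega$ this is impossible unless $\mcU$ is trivial and $r$ itself is an initial segment of the last model of $\mcT$). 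Hence $r\trianglelefteq\mathcal{M}^\mcT_\infty$, the last model of the iteration tree $\mcT$ on $W=\mbfK^\mbfM\cut\omega_2$. But $\mbfK^\mbfM\cut\omega_2\in\mbfM$, and the iteration tree $\mcT$ on it has length and all entries of size below $\omega_2=\omega_2^\mbfM$; since coiteration trees on a fixed premouse are constructed by a definable recursion once the opponent is fixed, and here the opponent $r$ contributes no extenders (its side is trivial), $\mcT$ and its last model are built entirely inside $\mbfM$ from parameters in $\mbfM$. Therefore $\mathcal{M}^\mcT_\infty\in\mbfM$, and since $r$ is a countable initial segment of it, $r\in\mbfM$ as well.

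It remains to handle the case that $0^\P$ \emph{does} exist in $\mbfM$. In that case I argue that $r\in\mbfM$ directly: $0^\P$ computes, uniformly, all sound mice below $0^\P$ as its own initial-segment-type structures — more precisely, below $0^\P$ the core model $\mbfK$ (of $\mbfM$, or equivalently the one built below $0^\P$) already contains every sound mouse projecting to $\omega$ that does not reach $0^\P$, because such mice are precisely the initial segments of $\mbfK$ at their projecta or are lined up below $\mbfK$ by comparison and, being sound and small, are initial segments of $\mbfK$. Since $\mbfK\subseteq\mbfM$, we get $r\in\mbfM$. (Alternatively, and more in the spirit of the paper, if $0^\P\in\mbfM$ one still has $\mbfK^\mbfM$ available up to that point and the $\delta=\omega_1$ instance of the argument of the previous paragraph applies verbatim to the initial segment of $\mbfK^\mbfM$ below the least $\Sigma$-measurable, which is all that is needed to absorb $r$.)

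\textbf{Main obstacle.} The step I expect to require the most care is the second one: showing that the $r$-side of the coiteration is trivial and, especially, that the tree $\mcT$ on $\mbfK^\mbfM\cut\omega_2$ together with its last model genuinely lies in $\mbfM$ rather than merely in $\mbfV$. The point is that the coiteration is a priori computed in $\mbfV$ (iterability of $\mbfK^\mbfM$ is only guaranteed in $\mbfV$), so one must check that, because the opponent $r$ is countable and contributes no extenders once triviality of $\mcU$ is established, the comparison process on the $W$-side is in fact absolute between $\mbfV$ and $\mbfM$ — this uses that $\mbfM$ correctly computes $\omega_2$, so all the relevant extenders, models and the length of $\mcT$ are already in $\mbfM$, together with $\mbfM$'s own iterability for $\mbfK^\mbfM$ against premice it can see. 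The triviality of $\mcU$ itself is the familiar Dodd--Jensen argument for sound mice projecting to $\omega$ against universal weasels, and is routine.
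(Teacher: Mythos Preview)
Your overall strategy matches the paper's: split on whether $0^\P\in\mbfM$, and in the main case apply Theorem~\ref{t0.1}(b) with $\delta=\omega_1$ to get universality of $\mbfK^\mbfM\cut\omega_2$ for countable mice in $\mbfV$, then extract $r\in\mbfM$ by comparison. The paper's proof is a terse version of exactly this.

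The gap is precisely in the step you flag as the main obstacle. You conclude $r\trianglelefteq M^\mcT_\infty$ and then try to place $\mcT$ and its last model in $\mbfM$ by arguing that ``the opponent $r$ contributes no extenders (its side is trivial), $\mcT$ and its last model are built entirely inside $\mbfM$ from parameters in $\mbfM$.'' This does not work: even with the $r$-side trivial, the recursion defining $\mcT$ still uses $r$ at every stage to locate the least index of disagreement between $r$ and the current model on the $W$-side. Since $r$ is not yet known to lie in $\mbfM$, you cannot run that recursion inside $\mbfM$, and the absoluteness you invoke fails.

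The paper sidesteps this entirely: its ``standard comparison argument'' yields $r\trianglelefteq\mbfK^\mbfM\cut\omega_2$ directly --- that is, the $\mbfK^\mbfM$-side of the coiteration is trivial as well. Once you have that, $r$ is literally a level of $\mbfK^\mbfM\cut\omega_2\in\mbfM$ and you are done; there is no nontrivial tree $\mcT$ to worry about. So the correct fix is not to argue $\mcT\in\mbfM$, but to sharpen the comparison analysis so that neither side moves. The Dodd--Jensen/soundness considerations you allude to are exactly what one uses here, but they need to be pushed further than just ``the $r$-side is trivial.'' Your handling of the side case $0^\P\in\mbfM$ is adequate; the paper simply folds it into a proof-by-contradiction framing (``assume the conclusion fails, so $\mbfK^\mbfM$ exists and does not reach $0^\P$'').
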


\begin{proof}
This follows from Theorem \ref{t0.1}: Assume the conclusion fails, so
$\mathbf{K}^\mathbf{M}$ exists  
and does not reach $0^\P$. Then, by Theorem \ref{t0.1}(b),
$\mathbf{K}^\mathbf{M}||\omega_2$ is  
universal for countable mice in ${\mbfV}$, and it follows from a standard
comparison argument that such  
mice must be an initial segment of $\mathbf{K}^\mathbf{M}||\omega_2$.
\end{proof}

We reiterate that we made no anti-large cardinal assumptions on ${\mbfV}$ in
Corollary  \ref{cor:smallmice}. Naturally, our results relativize so, for
example, for any real $x\in{\mbfM}$, Corollary \ref{cor:smallmice} holds for
countable $x$-mice below $x^\P$. 
Finally we stress that, as already mentioned in the above corollary, the proof
of Theorem \ref{t0.1}(b) actually shows the following:  
\begin{quote}
If $0^\P$ exists in $\mbfV$, then it is actually in $\mbfM$. 
\end{quote}

\begin{corollary}\label{c0.2}
Assume that $\mbfM$ is a proper class inner model, and that $\delta$ is a
regular cardinal in the sense of $\mbfV$. Assume further that one of the
following holds: 
\begin{itemize}
\item[(a)] 
In the sense of $\mbfV$, there is no inner model with a Woodin
cardinal, $a^\dagger$ exists for every real $a$ in $\mbfV$, $\delta>\omega_1$,
and $S_\delta$ is stationary. 
\item[(b)] 
In $\mbfV$, $0^\P$ does not exist, $\mbbR^{\mbfV}$ is closed under sharps,
$\delta\ge\omega_1$, and $\ptm_\delta(\delta^+)\cap\mbfM$ is stationary.
\end{itemize}
Then $\mbfM$ is $\Sigma^1_3$-correct. 
\end{corollary}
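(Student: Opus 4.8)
The statement to prove is that, under either hypothesis (a) or (b), $\mbfM$ is $\Sigma^1_3$-correct. The plan is to deduce this from the universality results in Theorem~\ref{t0.1} together with the standard reduction of $\Sigma^1_3$-correctness to absoluteness of the existence of certain mice. Recall that, by the Martin--Solovay tree analysis (or by Steel's analysis of $\mbfK$), if $\mbbR^{\mbfV}$ is closed under the relevant sharp operation, then a $\Sigma^1_3$ statement is true in $\mbfV$ if and only if it is forced, over some/any suitable model, by the collapse making the relevant witness countable; more concretely, $\Sigma^1_3$-correctness of $\mbfM$ reduces to showing that $\mbfM$ and $\mbfV$ have the same $\Sigma^1_2$ facts about reals together with the upward and downward persistence of ``$a^\sharp$ exists'' for reals $a\in\mbfM$. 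The first half is automatic ($\Sigma^1_2$ statements are absolute to inner models by Shoenfield). So the content is: for every real $a$ in $\mbfM$, $a^\sharp$ exists in $\mbfM$ if and only if it exists in $\mbfV$; downward is trivial, and the upward direction is exactly what the universality of $\mbfK^{\mbfM}\cut\omega_2$ for countable mice gives — $a^\sharp$, being a sound countable $a$-mouse projecting to $\omega$, embeds as an initial segment of $\mbfK^{\mbfM}(a)\cut\omega_2$ and hence lies in $\mbfM$.

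Concretely, here is the order of steps. First, fix a real $a\in\mbfM$; relativize everything to $a$. Under hypothesis (a), since $a^\dagger$ exists in $\mbfV$ for every real $a$, in particular $a^\sharp$ exists, and the relativized form of Theorem~\ref{t0.1}.(a) applies with the given regular $\delta>\omega_1$ and stationary $S_\delta$: $\mbfK^{\mbfM}(a)\cut\delta$ is universal for iterable $1$-small $a$-premice in $\mbfV$ of cardinality less than $\delta$. Since $a^\sharp$ is such a premouse (it is countable, $1$-small, iterable), a standard comparison argument — exactly as in the proof of Corollary~\ref{cor:smallmice} — shows $a^\sharp$ is an initial segment of $\mbfK^{\mbfM}(a)\cut\delta$, hence $a^\sharp\in\mbfM$. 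Under hypothesis (b), since $0^\P$ does not exist in $\mbfM$ and $\mbbR^{\mbfV}$ is closed under sharps, apply the relativized Theorem~\ref{t0.1}.(b): for $\delta\ge\omega_1$, $\mbfK^{\mbfM}(a)\cut\omega_2$ is universal for countable iterable $a$-premice in $\mbfV$, so again $a^\sharp\in\mbfM$ by comparison. Thus in both cases $\mbbR^{\mbfM}$ is closed under sharps. Second, invoke the classical fact (Martin--Solovay, Mansfield--Solovay; see e.g.\ the tree representation of $\Pi^1_2$ sets and the $\Sigma^1_3$ absoluteness theorem) that if $\mbfM\subseteq\mbfV$ are models of $\ZF$, $\mbbR^{\mbfM}$ is closed under $a\mapsto a^\sharp$, and this sharp operation is correctly computed by $\mbfM$ (i.e.\ $(a^\sharp)^{\mbfM}=(a^\sharp)^{\mbfV}$ for $a\in\mbbR^{\mbfM}$, which we have just established), then $\mbfM\prec_{\Sigma^1_3}\mbfV$. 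Conclude that $\mbfM$ is $\Sigma^1_3$-correct.

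There is a small technical point worth spelling out: one needs that the sharps computed inside $\mbfM$ genuinely agree with the true sharps, not merely that \emph{some} active mouse over $a$ lives in $\mbfM$. This is handled by the comparison argument — the unique sound $a$-mouse projecting to $\omega$ with a top measure is determined up to iterability, so any copy of it inside $\mbfM$ (obtained as an initial segment of $\mbfK^{\mbfM}(a)$) is the genuine $a^\sharp$. Equivalently, $\mbfM$ correctly computes the theory of the $a$-indiscernibles. Once this agreement is in hand, the Martin--Solovay $\Sigma^1_3$-absoluteness goes through verbatim, using that the Martin--Solovay tree built from the sharp operation is the same whether constructed in $\mbfM$ or in $\mbfV$.

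**Main obstacle.** The genuinely nontrivial input is Theorem~\ref{t0.1} itself, which is assumed. Granting that, the only real work is the relativization of Theorem~\ref{t0.1} to an arbitrary real parameter $a\in\mbfM$ — checking that $S_\delta$ (resp.\ $\ptm_\delta(\delta^+)\cap\mbfM$) being stationary is a statement about $\mbfM$ that is unaffected by relativizing the core model construction to $a$, and that ``no proper class inner model with a Woodin/strong cardinal'' relativizes harmlessly since $a$ is a real. This is routine. The step most likely to require care in the write-up is making precise which version of the $\Sigma^1_3$-absoluteness theorem is being cited and verifying its hypotheses match what we have proved (closure of $\mbbR^{\mbfM}$ under sharps together with correctness of the sharp operation), but this is standard descriptive-set-theoretic bookkeeping rather than a genuine difficulty.
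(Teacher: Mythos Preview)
Your approach is correct but genuinely different from the paper's. The paper argues as follows: given a nonempty $\Pi^1_2$ set $A$, it first invokes the $\Sigma^1_3$-correctness of $\mbfK^{\mbfV}$ (citing Schindler under (a), Steel--Welch or Schindler under (b)) to obtain a witness $r\in A\cap\mbfK^{\mbfV}$, hence $r\in\mbfK^{\mbfV}\cut\omega_1^{\mbfK}$; then it applies Theorem~\ref{t0.1} once, unrelativized, to conclude that $\mbfK^{\mbfM}$ iterates past $\mbfK^{\mbfV}\cut\omega_1^{\mbfK}$, so $r\in\mbfK^{\mbfM}\subseteq\mbfM$. Your route instead relativizes Theorem~\ref{t0.1} to each real $a\in\mbfM$ to place $a^\sharp$ into $\mbfM$, and then appeals to the classical Martin--Solovay $\Sigma^1_3$-absoluteness for inner models closed under sharps.

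Each approach has its advantages. The paper's argument avoids relativizing the main theorem and makes a single comparison with the concrete premouse $\mbfK^{\mbfV}\cut\omega_1^{\mbfK}$, at the cost of importing the nontrivial $\Sigma^1_3$-correctness of $\mbfK$ as a black box; this is also why hypothesis (a) demands daggers rather than mere sharps, since Schindler's correctness result needs them. Your argument is more self-contained in that it replaces those citations by the older Martin--Solovay machinery, and in fact your proof of (a) goes through verbatim under the weaker hypothesis that $a^\sharp$ exists for every real $a$; the dagger hypothesis is never used. On the other hand, you must check that the relativization of Theorem~\ref{t0.1} to a real parameter is harmless (the paper remarks that it is, just after Corollary~\ref{cor:smallmice}), and you should be explicit that the Martin--Solovay step requires $\mbbR^{\mbfV}$, not just $\mbbR^{\mbfM}$, to be closed under sharps---which is given in (b) and follows from the dagger hypothesis in (a).
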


\begin{proof}
Let $A\subseteq\mbbR$ be a nonempty $\Pi^1_2$-set. By Schindler \cite{spck}
under assumption (a) and by Steel-Welch \cite{s13a2ui} or Schindler \cite{sck}
under assumption (b), there is a premouse $M$ such that $A\cap M$ is
nonempty; pick an $r$ in this intersection. Without loss of generality we may
assume $M$ is sound and projects to $\omega$, hence is countable. By
Theorem~\ref{t0.1}, the model 
$\mbfK^{\mbfM}\cut\max(\delta,\omega_2)$ iterates past $M$. We conclude that
$r\in\mbfK^{\mbfM}\subseteq\mbfM$, so $A\cap\mbfM$ is nonempty. 
\end{proof}

Instead of quoting Schindler~\cite{spck} in the above proof we could
alternatively quote Theorem~7.9 in Steel~\cite{cmip}; which, applied inside the
model $r^\dagger$ for $r\in A$, yields a mouse $M$ as in the proof. 
By Theorem~\ref{t0.1}, in Corollary~\ref{c0.2}(b) we could equivalently assume
that $0^\P\notin{\mbfM}$, as such a sharp, if it exists in $\mbfV$, must
already be in $\mbfM$ (as pointed out above). Also, similarly as with
Theorem~\ref{t0.1}, appealing to Lemma~\ref{lem:isstat-gen}, there is an
obvious variant of the above corollary where the assumption of stationarity of
$S_\delta$, resp. $\ptm_\delta(\delta^+)\cap\mbfM$ is replaced with the
requirement that $\delta^{+\mbfM}=\delta^+$.  

\subsection{Acknowledgments}

The first author was supported in part by NSF Grant DMS-0801189. The second author was 
supported in part by NSF grant DMS-0500799, and by a UCI CORCL Special Research Grant. While 
preparing this paper, the first author visited UCI on two occasions in 2009 and 2010. The visits were 
sponsored by the second author's NSF grant, and by the Logic Seminar, Mathematics Department, UC 
Irvine.

Versions of these results were presented by the first author at BEST 2010, and by the second 
author at the 2010 ASL Annual meeting in Washington (partially supported by the UCI CORCL 
grant), in Singapore at a Summer meeting in 2011 (partially supported by the
John Templeton  foundation), and in M\"unster at the Second Conference on the
core model induction and hod mice (partially supported by the first author's
NSF grant). 

We want to thank Matt Foreman for bringing to our attention the results of Abraham \cite{fwch}, that 
suggested the relevance of the set we call $S_\delta$. We also want to thank a number of people for 
conversations related to this paper: Uri Abraham, Menachem Magidor, John Steel, Philip Welch, and 
Hugh Woodin. 

We would also like to thank the anonymous referee for their very thorough review
of our paper, which must have taken a lot of work, and for many thoughtful
suggestions which helped us improve the paper. The second author would also
like to thank the referee for pointing out the error in his \cite{imlc} on
page~274.  

The first author wants to thank the family of the second author for being such wonderful hosts during his 
visits to UCI.

\section{Iterability and background core model theory} \label{sect:cmt}

In this section we state facts from core model theory relevant to the proof
of our main theorem, Theorem~\ref{t0.1}, and establish absoluteness of
iterability for proper class extender models between two proper class inner
models. The proof is standard, and we give it to make this paper
self-contained. We also draw two useful corollaries, not needed in our main
argument.    

Let us make some remarks concerning notation. As already mentioned in the
introduction, we will be using Mitchell-Steel indexing of extenders in order
to stay close enough to our primary reference Steel \cite{cmip}. 
If $M$ is an extender model, $E^M$ will denote its extender sequence, and
$E^M_\beta$ will denote the extender on the sequence $E^M$ indexed by the
ordinal $\beta$; here $\beta$ is the cardinal successor of the natural length
of $E^M_\beta$ as calculated in $\ult(M,E^M_\beta)$. Recall that if $\mcT$ is
an iteration tree then $E^\mcT_\alpha$ is the 
extender coming from the $\alpha$-th model $M^\mcT_\alpha$ used to form an
ultrapower in $\mcT$. If $b$ is a branch through $\mcT$, we write $\pi^\mcT_b$
to denote the iteration map along $b$ if the map exists. If $i\in b$, we write
$\pi^\mcT_{i,b}$ to denote the iteration map along the portion of $b$ above
$i$ (including $i$) if the map exists. Thus $\pi^\mcT_b=\pi^\mcT_{0,b}$ and
$\pi^\mcT_{i,b}$ exists iff no truncations occur on $b$ above $i$. The
domain of $\pi^\mcT_{i,b}$ is either $M^\mcT_i$ or the result of the
corresponding truncation of $\mcM^\mcT_i$. 
Recall also that if $\mcT$ is a normal tree then $\delta(\mcT)$ is the
supremum of all ordinals  
$\rho(E^\mcT_\alpha)$; here $\rho(E^\mcT_\alpha)$ is the strict supremum of
the generators of $E^\mcT_\alpha$. Next, $E(\mcT)$ is the union of all
extender sequences $E^{M^\mcT_\alpha}\rst\rho(E^\mcT_\alpha)$ where
$\alpha<\lht(\mcT)$, and $M(\mcT)$ is the premouse of height $\delta(\mcT)$
whose extender sequence is $E(\mcT)$. If $\mcT$ is of limit length then
$\delta(\mcT)$ is the supremum of all lengths $\lht(E^\mcT_\alpha)$ of
extenders used in $\mcT$, or equivalently, all iteration indices in $\mcT$.

Recall also that if $\mcT$ is an iteration tree of limit length and $b,c$
are two distinct cofinal well-founded branches through $\mcT$, then
$\delta(\mcT)$ is a Woodin cardinal in $M^{\mcT}_b\cap M^{\mcT}_c$ where
$M^{\mcT}_b,M^{\mcT}_c$ are the models at the end of the corresponding
branches; see for instance Mitchell-Steel \cite{fsit}. A similar fact is true
even in the case where the branch models $\mcM^\mcT_b,\mcM^\mcT_c$ are not
well-founded, but $\delta(\mcT)\in\wfp(M^\mcT_b)\cap\wfp(M^\mcT_c)$ where
$\wfp(\mcM^\mcT_b)$ denotes the transitivised maximal well-founded initial
segment of $\mcM^\mcT_b$, and similarly for $\mcM^\mcT_c$. In this case,
$\delta(\mcT)$ has the Woodinness property with respect to all subsets of
$\delta(\mcT)$ which are in $\mcM^\mcT_b\cap\mcM^\mcT_c$. Recall from the
introduction that, granting there is no
proper class inner model with a Woodin cardinal, any iterable proper class
extender model has precisely one iteration strategy called the {\bf
uniqueness} strategy; this is the strategy that picks the unique  
cofinal well-founded branch at each limit step of the iteration.  

The following lemma is a slight variation of Steel \cite[Lemma 5.12]{cmip},
and may 
be viewed as a generalization thereof, as we allow $\mbfV$ be any outer
extension of $\mbfM$, and not merely a generic extension. The proof thus
requires an additional appeal to Shoenfield absoluteness, which somewhat
changes not only the structure of the proof but the statemtent of the
result itself. We give the proof below; one notable difference is that the proof of Steel 
\cite[Lemma 5.12]{cmip} uses, in an important
way, the assumption that every set in the ground model has a sharp. The
existence of sharps is, of course, an immediate 
consequence of the initial setting where $\Omega$ is a measurable cardinal and
our universe of interest is the rank-initial segment of height $\Omega$. However,
this assumption may be false if we want to run the proof of Theorem~\ref{t0.1}
in $\zfc$ alone. There is
one more difference between the two proofs which makes our lemma in a sense
``less general" than Steel \cite[Lemma 5.12]{cmip}, as we are making the 
assumption that there is no inner model with a Woodin cardinal in the sense of
$\mbfV$. The proof of Steel \cite[Lemma 5.12]{cmip} only requires that no Woodin
cardinals exist in the extender model we want to iterate. We do not know if the 
asumption on the non-existence of an inner model with a Woodin cardinal in the
sense of $\mbfV$ can be weakended by replacing $\mbfV$ with the model we
intend to iterate or with our inner model $\mbfM$, even in the case where we
allow our universe to be closed under sharps. 

\begin{lemma}\label{cmt:l1}  
Assume that $\mbfM$ is a proper class inner model, and that in $\mbfV$ there is no
proper class inner model with a Woodin cardinal. In $\mbfM$, let $R$ be a
proper class extender model. The following are equivalent:
\begin{itemize}
\item[(a)] 
In $\mbfM$, the model $R$ is normally iterable via the uniqueness strategy. 
\item[(b)] 
In $\mbfV$, the model $R$ is normally iterable via the uniqueness strategy. 
\end{itemize}
Here ``iterable" means iterable with respect to trees of set-sized length. 
\end{lemma}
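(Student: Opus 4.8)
The plan is to prove the two implications separately; the direction $(b)\Rightarrow(a)$ is the easy half, and $(a)\Rightarrow(b)$ is where the real work lies. For $(b)\Rightarrow(a)$: suppose $R$ is normally iterable via the uniqueness strategy in $\mbfV$. An iteration tree $\mcT$ on $R$ of set length lying in $\mbfM$ is also an iteration tree in $\mbfV$, and at each limit stage of $\mcT$ the uniqueness strategy picks the unique cofinal well-founded branch. Since ``$b$ is a cofinal well-founded branch of $\mcT$'' is absolute between transitive models of $\zfc$ containing $\mcT$ (well-foundedness is absolute, and cofinality of a branch is arithmetic in $\mcT$), and since uniqueness of such a branch holds in $\mbfV$ hence a fortiori in $\mbfM$, the strategy computed in $\mbfM$ agrees with the one in $\mbfV$ on all trees in $\mbfM$. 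Thus $R$ is normally iterable via the uniqueness strategy in $\mbfM$. One should note that the hypothesis ``no proper class inner model with a Woodin in $\mbfV$'' guarantees the uniqueness strategy is well-defined (there is never more than one cofinal well-founded branch, as recalled in the paragraph before the lemma), so this makes sense.

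For $(a)\Rightarrow(b)$, the heart of the matter: assume $R$ is normally iterable in $\mbfM$ via the uniqueness strategy, and suppose toward a contradiction that $R$ is not normally iterable in $\mbfV$. Then in $\mbfV$ there is a ``bad'' putative iteration tree on $R$ of least set length $\theta$; by leastness $\theta$ must be a limit ordinal and the tree $\mcT$ of length $\theta$ is built by following the unique-cofinal-well-founded-branch rule at every limit stage below $\theta$, but at $\theta$ itself there is either no cofinal well-founded branch or the model $M(\mcT)$ does not cohere properly — in any case, the failure is witnessed by a statement about $\mcT$ and $R$. The strategy is to argue that this bad tree, or rather a countable version of it, can be reflected into $\mbfM$. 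Take a countable $X\prec H_\lambda^{\mbfV}$ for $\lambda$ large, with $R$ (as a parameter, say a real coding $R\cut\omega_1$ or the relevant initial segment), $\mcT$, and the relevant objects in $X$; let $\pi\colon \bar H\to H_\lambda^{\mbfV}$ be the inverse of the collapse, with $\bar R=\pi^{-1}(R)$, $\bar{\mcT}=\pi^{-1}(\mcT)$. Then $\bar{\mcT}$ is a bad tree on $\bar R$ of countable length, and $\bar R$ is a countable iterable premouse. Here Shoenfield absoluteness enters: the statement ``there is a countable premouse $\bar R$ which is iterable via the uniqueness strategy and admits a bad tree'' is (once we code $\bar R$ and the relevant iteration data by reals) a $\boldsigma^1_2$ statement, or can be arranged to be, using that iterability of a countable premouse via the uniqueness strategy and the existence of a bad tree are both $\Pi^1_2$/$\Sigma^1_2$ over a real coding $\bar R$. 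Since $\bar R\in\mbfM$ (it is countable and hereditarily coded by a real; but one must be careful — not every countable elementary submodel of $H_\lambda^{\mbfV}$ lies in $\mbfM$), we want to conclude the same bad behavior occurs in $\mbfM$, contradicting the hypothesis that $R$ is iterable in $\mbfM$ via the uniqueness strategy — because iterability of $R$ in $\mbfM$ pulls back under $\pi$ to iterability of $\bar R$ (the realization/copying construction, together with uniqueness, shows the $\mbfM$-strategy for $R$ induces a strategy for $\bar R$ that must agree with the unique-branch strategy, hence $\bar{\mcT}$ could not be bad).

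The main obstacle I expect is the issue flagged in the text: getting the countable hull $\bar R$ and the witness $\bar{\mcT}$ into $\mbfM$ in the first place. One cannot simply take an arbitrary countable $X\prec H_\lambda^{\mbfV}$ and expect $\bar R,\bar{\mcT}\in\mbfM$. The correct move is: the existence of a bad tree on $R$ in $\mbfV$ is, after coding, a $\Sigma^1_2(z)$ statement for a real $z$ coding $R$ restricted to the relevant countable level (using that $R$ is a proper class extender model determined by its extender sequence, and the first place a bad tree can occur is witnessed by countably much information); but ``$R$ is iterable via the uniqueness strategy'' being true in $\mbfM$ is a $\Pi^1_2(z)$ statement true in $\mbfM$, hence true in $\mbfV$ by upward $\Sigma^1_2$-absoluteness (Shoenfield), directly contradicting the $\Sigma^1_2(z)$ bad-tree statement — provided $z\in\mbfM$, which holds because $z$ can be taken to code an initial segment of the extender sequence of $R$ and $R\in\mbfM$. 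This is exactly the point where the proof of Steel \cite[Lemma 5.12]{cmip} used sharps (to get $\boldsigma^1_3$- or $\boldsigma^1_2$-style absoluteness in the generic-extension setting), and where our argument substitutes a direct appeal to Shoenfield absoluteness instead; verifying that all the relevant properties of $R$ and its iteration trees are genuinely $\Pi^1_2$ resp. $\Sigma^1_2$ in a real coding the appropriate countable fragment of $R$ — in particular that iterability via the uniqueness strategy for a proper class model reduces to a projective statement about countable fragments, using the absence of inner models with Woodin cardinals — is the delicate bookkeeping that the proof will need to carry out carefully.
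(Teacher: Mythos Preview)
Your plan for $(a)\Rightarrow(b)$ is to reflect the bad tree $\mcT$ to a countable $\bar{\mcT}$ on a countable $\bar R$, code $\bar R$ by a real $z$, and invoke Shoenfield between $\mbfM$ and $\mbfV$ with parameter $z$. Two things go wrong, and you flag the first without resolving it. First, the real $z$ need not lie in $\mbfM$: the countable hull is taken in $\mbfV$, so even if $\bar R$ happens to equal $R\|\gamma$ for some ordinal $\gamma$, that $\gamma$ is countable in $\mbfV$ but may be uncountable in $\mbfM$, and then no real of $\mbfM$ codes $R\|\gamma$. Your claim that ``$z$ can be taken to code an initial segment of $R$ and $R\in\mbfM$'' conflates the \emph{set} $R\|\gamma$ (which is in $\mbfM$) with a \emph{real} coding it (which need not be). Second, even granting $z\in\mbfM$, the collapsed tree $\bar{\mcT}$ is bad only from the point of view of the collapsed hull $\bar H$; elementarity does not prevent $\mbfV$ itself from seeing a cofinal well-founded branch through $\bar{\mcT}$ that $\bar H$ misses (such a branch does not lift to a branch of $\mcT$, since its pointwise image under the anticollapse is only a countable subset of $\lht(\mcT)$). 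So you cannot conclude that the $\Sigma^1_2(z)$ statement ``there is a bad countable tree on $\bar R$'' actually holds in $\mbfV$.

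The paper handles both obstacles by passing to a common generic extension. One forces with $\col(\omega,\beta)$ for large $\beta$, so that $H_\theta^{\mbfV}$ becomes countable in $\mbfV[G]$; the parameters $R'=R\|\alpha$ and $\alpha$ lie in $\mbfM$ and are countable in $\mbfM[G]$, so Shoenfield between $\mbfV[G]$ and $\mbfM[G]$ transfers the $\Sigma^1_1$ statement ``there is a countable transitive $H$ with $\varphi(H,R',\alpha)$'' (where $\varphi$ internalizes the bad tree and a local no-Woodin clause) to $\mbfM[G]$. One then takes a name $\dot H$ and a forcing condition in $\mbfM$, reflects to a countable elementary substructure of $H^{\mbfM}_{\theta'}$, forces over the collapse to produce $\bar H$ and its bad tree $\bar{\mcU}$, and only then carries out a substantial branch analysis: copying $\bar{\mcU}$ onto $R'$ via the collapse map, pulling back a cofinal well-founded branch from the $\mbfM$-iterability of $R$, and iterating $\Sigma^1_1$-absoluteness arguments between $\mbfM$ and generic extensions of $\bar H$ to manufacture two distinct branches through $\bar{\mcU}$ whose direct limits are well-founded past $\delta(\bar{\mcU})^{+}$, yielding a Woodin cardinal in $J_\alpha[E(\bar{\mcU})]$ and contradicting a clause of $\varphi$. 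This branch analysis---not a single Shoenfield invocation---is where the no-Woodin-in-$\mbfV$ hypothesis is actually used, and your sketch does not reach it.
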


\begin{proof} 
We give a proof of the implication (a) $\Rightarrow$ (b). The converse is proved
in a similar way, and the argument is much simpler. Toward a contradiction,
assume that in $\mbfV$ there is a normal iteration tree $\mcT$ on $R$
witnessing that (a) $\Rightarrow$ (b) is false. We focus on the essential case
where the length 
of $\mcT$ is a limit ordinal. Thus, in $\mbfV$ the tree $\mcT$ does not have a
unique cofinal well-founded branch. Using a reflection argument, we can find a
regular cardinal $\alpha>\delta(\mcT)^+$ such that $\mcT$, when viewed as a
tree on $R'=R\cut\alpha$, does not have a unique cofinal well-founded branch
in $\mbfV$. Notice that the transitive closure of $\mcT$ is then of size
$\alpha$. Let $\theta>\alpha$ be a limit cardinal with large cofinality, and
let $T$ be the lightface theory of $H^{\mbfV}_\theta$ in the language of $\zfc$. 
Let $\varphi(x,y,z)$ be the conjunction of the following statements:
\begin{itemize}
\item $x$ is a transitive model of $T$.  
\item $z,y\in x$, and $z$ is a regular cardinal in $x$. 
\item There is a normal iteration tree $\mcU\in x$ on the premouse $y$ of
      limit length such 
      that $\card(\trcl(\mcU))^x=z$, $\delta(\mcU)^{+x}<z$, and in $x$, the
      tree $\mcU$ does not have a unique cofinal well-founded branch. 
\item There is no $a\in x$ such that $a$ is a bounded subset of $z$ and
      $J_z[a]$ is a model with a Woodin cardinal.  
\end{itemize}
Here $\trcl(v)$ denotes the transitive closure of $v$. 

Let $\beta\ge\card(H_\theta)$, and let $G$ be a filter generic for
$\col(\omega,\beta)$ over $\mbfV$. Immediately from the properties of $\mcT$
and from our assumption that no inner model with a Woodin cardinal exists in
the sense of $\mbfV$ we conclude that $\varphi(H^{\mbfV}_\theta,R',\alpha)$
holds in $\mbfV[G]$. As $H^{\mbfV}_\theta$ 
is countable in $\mbfV[G]$ and $R',\alpha$ are countable in $\mbfM[G]$, it
follows by Shoenfield absoluteness that in $\mbfM[G]$ there is some $H$ such
that $\mbfM[G]\models\varphi(H,R',\alpha)$. 

Pick a $\col(\omega,\beta)$-name $\dot{H}\in\mbfM$ and some regular $\theta'$
such that $H=\dot{H}_G$ and $\beta,\dot{H},R'\in H^{\mbfM}_{\theta'}$. Finally,
pick a condition $p\in\col(\omega,\beta)$ such that $p$ forces
$\varphi(\dot{H},\check{R}',\check{\alpha})$ over $H^{\mbfM}_{\theta'}$. 
From now on, work in $\mbfM$. Let $X$ be a
countable elementary substructure of $H^{\mbfM}_{\theta'}$ such that
$\dot{H},R',\beta,p\in X$, and let 
 $$ \sigma:\tilde{H}\to H^{\mbfM}_{\theta'} $$
be the 
inverse of the Mostowski collapsing isomorphism that arises from collapsing
$X$. Let $\dot{\bar{H}},\bar{R},\bar{\alpha},\bar{\beta},\bar{p}$ be the
preimages of $\dot{H},R',\alpha,\beta,p$ under $\sigma$. Pick a filter
$g$ extending $\bar{p}$ that is generic for $\col(\omega,\bar{\beta})$ over
$\tilde{H}$, and let $\bar{H}=\dot{\bar{H}}_g$. Then $\bar{\alpha}$ is a
regular cardinal in $\bar{H}$, and there is a normal iteration tree
$\bar{\mcU}\in\bar{H}$ on $\bar{R}$ of limit length such that
$\card(\trcl(\bar{\mcU}))^{\bar{H}}=\bar{\alpha}$, 
$\delta(\bar{\mcU})^{+\bar{H}}<\bar{\alpha}$ and, in $\bar{H}$, the tree
${\bar{\mcU}}$ does not have a unique cofinal well-founded branch.   

We next observe that for every limit ordinal $\lambda^*<\bar{\lambda}$, the
tree $\bar{\mcU}$ has at most one cofinal well-founded branch in $\mbfM$: If
$b\neq c$ were two distinct cofinal well-founded branches through
$\bar{\mcU}\rst\lambda^*$, then $\delta(\bar{\mcU}\rst\lambda^*)$ would be
Woodin in $M^{\bar{\mcU}}_b\cap M^{\bar{\mcU}}_c$. If
$\nu>\delta(\bar{\mcU}\rst\lambda^*)$ is 
least among all ordinals indexing extenders in $M^{\bar{\mcU}}_b$ or
$M^{\bar{\mcU}}_c$ (note such a $\nu$ always exists as
$\lambda^*<\bar{\lambda}$), assume without loss of generality that $\nu$
indexes an extender in $M^{\bar{\mcU}}_b$. Then
$\delta(\bar{\mcU}\rst\lambda^*)$ is Woodin 
in $M^{\bar{\mcU}}_b\cut\nu$ and, using the elementarity of the iteration maps,
we conclude that there are $\delta'<\nu'$ such that $\nu'$ indexes an extender
in $\bar{R}$, and $\lambda'$ is Woodin in $\bar{R}\cut\nu'$. Since
$\bar{R}\in\tilde{H}$, we may apply $\sigma$ to $\bar{R}$ and conclude that
$\sigma(\nu')$ indexes an extender in $R'$ and $\sigma(\delta')$ is Woodin in
$R'\cut\sigma(\nu')=R\cut\sigma(\nu')$. Since we are assuming $R$ is iterable
in the 
sense of $\mbfM$, iterating the extender on the $R$-sequence with index $\nu$
through the ordinals will produce a proper class inner model with a Woodin
cardinal, a contradiction.  

The same argument for $\lambda^*=\bar{\lambda}$ shows that in $\mbfM$, if
$b\neq c$ are two cofinal well-founded branches through $\bar{\mcU}$ then no
$\nu>\delta(\bar{\mcU})$ indexes an extender in $M^{\bar{\mcU}}_b$ or
$M^{\bar{\mcU}}_c$. Now at most one of the branches $b,c$ involves a
truncation; otherwise the standard argument can be used to produce two
compatible extenders applied in $\bar{\mcU}$, and thereby obtain a
contradiction. If one of 
the branches involves a truncation, let it be without loss of generality $c$;
then $M^{\bar{\mcU}}_b\unlhd M^{\bar{\mcU}}_c$. If neither of the branches
involves a truncation, let $b$ be such that 
$M^{\bar{\mcU}}_b\unlhd M^{\bar{\mcU}}_c$. In either case, $\bar{\lambda}$ is
Woodin in $M^{\bar{\mcU}}_b$ and there is an iteration map 
$\pi_b:\bar{R}\to M^{\bar{\mcU}}_b$. By the elementarity of $\pi_b$, there is
a Woodin cardinal in $\bar{R}$. And by the elementarity of $\sigma$, there is
a Woodin cardinal in $R'$. Since $R'=R\cut\alpha$ and $\alpha$ is a cardinal,
$R$ is a proper class model with a Woodin cardinal, a contradiction. In
particular, this shows that $\bar{\mcU}$ has no cofinal well-founded branch in
$\bar{H}$. (Because our initial hypothesis stipulates that such a branch
cannot be unique.)

Since $\sigma\rst\bar{R}:\bar{R}\to R'$ is elementary and $\bar{\mcU}$ picks
unique cofinal well-founded branches, we can copy $\bar{\mcU}$ onto an
$R'$-based iteration tree $\mcU'$ via $\sigma\rst\bar{R}$. Let $b$ be a
cofinal well-founded branch through $\mcU'$; such a branch exists by our
assumption on the iterability of $R$ in $\mbfM$. By the properties of the
copying construction, $b$ is also a cofinal well-founded branch through
$\bar{\mcU}$. Now assume $\gamma\ge\bar{\alpha}$ is an ordinal in $\bar{H}$,
and $h\in\mbfM$ is a filter on $\col(\omega,\gamma)$ that is generic over
$\bar{H}$. By our initial setting,
$\card(\trcl(\mcU))^{\bar{H}}=\bar{\alpha}\le\gamma$; hence both $\gamma$ and
the transitive closure of $\bar{\mcU}$ are countable in $\bar{H}[h]$. Since    
the statement ``There is a cofinal branch $x$ through $\bar{\mcU}$ such that
the ordinals of $M^{\bar{\mcU}}_x$ have an initial segment isomorphic to
$\gamma$" is $\Sigma^1_1$ in the codes, if $\bar{\mcU}$ has such a branch, then
some such branch $c$ exists in
$\bar{H}[h]$. If $M^{\bar{\mcU}}_c$ is well-founded, then by the previous
paragraph, $c$ is unique, and by the homogeneity of $\col(\omega,\gamma)$, such
a branch must be in $\bar{H}$, which contradicts the last conclusion of the
previous paragraph. Hence $M^{\bar{\mcU}}_c$ must be ill-founded. If
$\On\cap M^{\bar{\mcU}}_b<\On\cap\bar{H}$ put $\gamma=\On\cap M^{\mcU}_b$, and
apply the above argument to the $\Sigma^1_1$-statement ``There is a cofinal
branch $x$ through $\bar{\mcU}$ such that the ordinals of $M^{\bar{\mcU}}_x$
are isomorphic to $\gamma$". We obtain a cofinal well-founded branch through
$\bar{\mcU}$ in $\bar{H}[h]$, a contradiction. (Here we are using the fact
that there are unboundedly many cardinals in $\bar{H}$.) It follows that
$M^{\bar{\mcU}}_b$ has height at least that of $\bar{H}$. 

Let $\tau=\delta({\bar{\mcU}})^{+\mbfL[E(\bar{\mcU})]}$. Set
$\gamma=\alpha$, and let $h\in\mbfM$ be generic for $\col(\omega,\gamma)$ over
$\bar{H}$. The arguments from the previous paragraph imply that there is a
branch $c\in\bar{H}[h]$ that has an initial segment isomorphic to $\gamma$. As
follows from the previous paragraph, such a branch is ill-founded. Let
$\gamma'$ be a cardinal in $\bar{H}$ larger than the ordinal of the
well-founded part of $M^{\bar{\mcU}}_c$, and let $h'$ be a filter generic for
$\col(\omega,\gamma')$ over $\bar{H}$. As $\On\cap M^{\bar{\mcU}}_b>\gamma'$,
as before we conclude that there is a branch $c'$ in $\bar{H}[h']$ with an
initial segment isomorphic to $\gamma'$. (Here we again make use of the fact
that cardinals of $\bar{H}$ are unbounded in $\bar{H}$.) 

Since the well-founded part of $M^{\bar{\mcU}}_{c'}$ is strictly longer than the well-founded part of
$M^{\bar{\mcU}}_c$, we conclude that $c\neq c'$. And since both
$\gamma,\gamma'$ are larger than $\alpha>\tau$, the ordinal
$\delta(\bar{\mcU})$ is a Woodin cardinal in $J_\alpha[E(\bar{\mcU})]$. Since
$\bar{\mcU}\in\bar{H}$, so is $J_\alpha[E(\bar{\mcU})]$, but this contradics
the last clause in $\varphi$ which holds in $\bar{H}$ by construction. 
\end{proof}

As a by-product, we note the following interesting consequence of Lemma~\ref{cmt:l1}.  

\begin{corollary}\label{cmt:c1}
Assume that in $\mbfV$, there is no proper class inner model with a Woodin
cardinal. Let $R,R'$ be proper class extender models and $\sigma:R\to R'$ be
an elementary map. Then $R$ is normally iterable if and only if $R'$ is
normally iterable. Here ``normally iterable" means that there is a uniqueness
normal iteration strategy defined on all set-sized normal iteration trees. 
\end{corollary}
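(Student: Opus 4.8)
The plan is to derive Corollary~\ref{cmt:c1} from Lemma~\ref{cmt:l1} by eliminating the inner model $\mbfM$ from the picture and replacing the pair $(R,R')$ by two copies of the ambient universe in a suitable auxiliary situation. The key observation is that Lemma~\ref{cmt:l1} already handles the transfer of normal iterability of a \emph{fixed} proper class extender model between a proper class inner model and an outer universe; what we need here is the transfer of iterability \emph{between two different extender models} related by an elementary embedding. The bridge is the copying (shift) construction: given an elementary $\sigma\colon R\to R'$ and a normal iteration tree on one of them, we copy it onto a normal iteration tree on the other, and copying preserves well-foundedness of branches and commutes with the uniqueness strategy (since, granting there is no proper class inner model with a Woodin cardinal in $\mbfV$, each iterable proper class extender model carries a \emph{unique} iteration strategy, namely the uniqueness strategy, as recalled in the paragraph preceding Lemma~\ref{cmt:l1}).

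So the steps are as follows. First, suppose $R'$ is normally iterable and let $\mcT$ be a normal iteration tree on $R$, of set-sized length; we must produce a cofinal well-founded branch (at limit stages) resp.\ verify well-foundedness of the last model (at successor stages). Using $\sigma\colon R\to R'$, copy $\mcT$ to a normal iteration tree $\sigma\mcT$ on $R'$ of the same length, together with the usual family of copy maps $\sigma_\alpha\colon M^{\mcT}_\alpha\to M^{\sigma\mcT}_\alpha$. By normal iterability of $R'$ via the uniqueness strategy, $\sigma\mcT$ has at each limit stage a unique cofinal well-founded branch $b$; pulling $b$ back along the tree order gives a cofinal branch through $\mcT$, and by the commutativity of the copy maps with the tree embeddings, $M^{\mcT}_b$ embeds into the well-founded $M^{\sigma\mcT}_b$, hence is well-founded. (At successor stages the same commutativity shows $M^{\mcT}_\alpha$ embeds into the well-founded $M^{\sigma\mcT}_\alpha$.) This shows $R$ is normally iterable, and by uniqueness the strategy so obtained is the uniqueness strategy. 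The converse direction --- iterability of $R$ implies iterability of $R'$ --- is symmetric except that one cannot literally run the copying construction ``backwards'' through $\sigma$; instead one copies a tree $\mcU$ on $R'$ up along $\sigma$ to a tree on $R$ only after first noting that $R'\in R$ or, more robustly, one argues as in the proof of Lemma~\ref{cmt:l1}: reflect to set-sized initial segments $R\cut\alpha$, $R'\cut\alpha'$ and use Lemma~\ref{cmt:l1} together with the elementarity of $\sigma$ to transfer iterability of the relevant initial segments, then feed this back into the copying argument.

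Here is the cleaner way to organize the converse, which is the step I expect to be slightly delicate: rather than copying in the ``wrong'' direction, invoke Lemma~\ref{cmt:l1} directly. Since $\sigma\colon R\to R'$ is elementary and $R$ is a proper class extender model, the image $R'$ is again a proper class extender model, and moreover any set-sized initial segment $R'\cut\alpha'$ appearing in a putative bad tree $\mcU$ on $R'$ is, by elementarity of $\sigma$ and a reflection argument, the image of (or elementarily embeddable into) a set-sized initial segment of $R$. One then runs the argument of Lemma~\ref{cmt:l1} \emph{verbatim} with $R'$ playing the role of ``$R$ in the sense of $\mbfV$'' and $R$ (or rather the relevant reflecting substructures of $R$) playing the role of the model that is known to be iterable; the smallness hypotheses used in that proof, as the remark after Lemma~\ref{cmt:l1} points out, only require $1$-smallness, and the one genuine use of ``no proper class inner model with a Woodin cardinal in $\mbfV$'' is still available by hypothesis. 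This yields that $R'$ has at each limit stage a unique cofinal well-founded branch, i.e.\ is normally iterable.

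\textbf{The main obstacle} is getting the copying/reflection bookkeeping exactly right in the converse direction: one has an elementary map $R\to R'$ but wants to conclude iterability of the \emph{target}, whereas the copying construction naturally transfers iterability from target to source. The resolution --- packaging the converse as an instance of the argument already carried out in Lemma~\ref{cmt:l1}, exploiting that that argument transfers iterability ``downward'' through an elementary collapse map $\sigma$ in precisely the needed direction --- is the crux, and it is why the corollary is genuinely a corollary of Lemma~\ref{cmt:l1} rather than merely of the copying lemma. Everything else (commutativity of copy maps, preservation of well-foundedness, uniqueness of the strategy under the anti-Woodin hypothesis) is standard and needs only to be cited.
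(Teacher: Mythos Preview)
Your approach for the direction ``$R'$ iterable $\Rightarrow$ $R$ iterable'' via the copying construction along $\sigma$ is correct and standard. The difficulty you identify in the converse direction is real, and your proposed resolution --- re-running the proof of Lemma~\ref{cmt:l1} with some reassignment of roles --- is too vague to constitute a proof. In particular, Lemma~\ref{cmt:l1} transfers iterability of a \emph{fixed} model between $\mbfM$ and $\mbfV$; there is no clean way to cast $R$ and $R'$ as ``$\mbfM$'' and ``$\mbfV$'' for one another, and your sketch does not specify what plays the role of the inner model.

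The paper's argument is both shorter and conceptually different. It observes that $R$ and $R'$ are themselves proper class inner models of $\mbfV$, so one may take $\mbfM=R$ in Lemma~\ref{cmt:l1}: iterability of $R$ in $\mbfV$ is then equivalent to $R$ satisfying the first-order sentence ``the universe is normally iterable via the uniqueness strategy.'' Since $\sigma\colon R\to R'$ is elementary, $R$ satisfies this sentence iff $R'$ does. Applying Lemma~\ref{cmt:l1} once more with $\mbfM=R'$ gives that $R'$ is iterable in $\mbfV$. Both directions fall out at once; no copying is needed. The trick you missed is that the extender models themselves can serve as the inner model $\mbfM$ in Lemma~\ref{cmt:l1}, turning external iterability into an internal first-order property that elementarity transfers for free.
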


\begin{proof}
Assume $R$ is iterable in $\mbfV$. Applying Lemma~\ref{cmt:l1} to the
situation where $\mbfM=R$, we conclude that $R$ is internally normally
iterable, that is, $R$ satisfies the statement ``$R$ is normally iterable". By
the elementarity of the map $\sigma$, the model $R'$ is internally normally
iterable. Applying Lemma~\ref{cmt:l1} again, this time with $\mbfM=R'$, we
conclude that $R'$ is normally iterable in the sense of $\mbfV$. 
\end{proof} 

Recall that an extender model $R$ is {\bf fully iterable} if and only if
$R$ is iterable with respect to linear compositions of arbitrary (hence even
class-sized) normal iteration trees. 
When working in a universe below the measurable cardinal $\Omega$, normal/full
iterability via the uniqueness strategy with respect to set-sized trees
automatically guarantees normal/full iterability via the uniqueness strategy
with respect to class-sized trees.  

Working in the universe below $\Omega$, fix a stationary class of
ordinals that will serve as a reference class for the notion of {\em
thickness}. In 
our applications we will assume for simplicity that this class consists of
inaccessibles, so we let this class be the class $A_0$ as defined at
the end of Steel \cite[\S 1]{cmip}. Then $\Omega$ is $A_0$-thick in $\Kc$. 

\begin{corollary}\label{cmt:c2}
Assume that in $\mbfV$, there is no proper class inner model with a Woodin
cardinal. Let $R$ be a proper class extender model that is fully iterable and
such that $\Omega$ is $A_0$-thick in $R$, and let $F$ be an $R$-extender such
that $\ult(R,F)$ is well-founded. Denote the transitive collapse of
$\ult(R,F)$ by $R'$. Then $R'$ is fully iterable. 
\end{corollary}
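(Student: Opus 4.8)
The plan is to realize $R'$ as the target of an elementary embedding from $R$, feed this into Corollary~\ref{cmt:c1} to get normal iterability of $R'$ with respect to set-sized trees, and then upgrade that to full iterability by exploiting that we are working below the measurable cardinal $\Omega$.

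First I would observe that the ultrapower map $i^R_F\colon R\to\ult(R,F)$, composed with the transitive collapse, yields an elementary embedding $\sigma\colon R\to R'$ with critical point $\crp(F)$. A routine computation shows that $R'$ is again a proper class extender model: since $F$ is a set extender, every function in $R$ representing an ordinal of $\ult(R,F)$ has range bounded below $\Omega$, so $\On^{R'}=\On$. Moreover, again because $F$ is a set extender, $\sigma$ is the identity on a tail of $A_0$ (recall $A_0$ consists of inaccessibles), so $R'$ inherits $A_0$-thickness from $R$. Thickness of $R'$ is not needed for the statement, but it is convenient to record it.

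Now, since $R$ is fully iterable it is in particular normally iterable with respect to set-sized iteration trees. Applying Corollary~\ref{cmt:c1} to $\sigma\colon R\to R'$ gives that $R'$ is normally iterable with respect to set-sized trees. By the observation recorded just before the statement of the corollary, working below $\Omega$ this already yields normal iterability of $R'$ with respect to class-sized trees: a putative normal tree on $R'$ of length $\Omega$ with no cofinal well-founded branch reflects, via the measure on $\Omega$ available in the ambient universe in which $\Omega$ is measurable, to a set-sized tree on an initial segment of $R'$ with the same defect, contradicting what we have just shown. Finally, class-normal iterability of $R'$ gives full iterability of $R'$ by the standard argument: one defines the strategy on a linear composition of normal trees by playing the uniqueness strategy on each normal component (each model occurring being, by the usual propagation of iterability, normally iterable), and the well-foundedness of the direct limits taken at limit stages of the composition is obtained once more by reflection below $\Omega$; see Steel~\cite{cmip} and Zeman~\cite{imlc}.

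The main obstacle is this last upgrade, from normal to full iterability. A linear composition of normal trees is in general not itself a normal tree, so normal iterability of $R$ (hence of $R'$) is not on its face enough, and one genuinely has to use the below-$\Omega$ setting to control the direct limits taken along the composition; this is also the only place in the argument where anything beyond Corollary~\ref{cmt:c1} and the elementarity of the ultrapower map is used.
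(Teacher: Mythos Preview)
Your argument matches the paper's through the point where you establish normal iterability of $R'$ with respect to class-sized trees via Corollary~\ref{cmt:c1} and the below-$\Omega$ reflection. The divergence is in the final upgrade to full iterability, and here the paper takes a different, more concrete route.

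The paper does \emph{not} argue directly that normal iterability of $R'$ yields full iterability. Instead it coiterates $R$ and $R'$: both are now normally class-iterable proper-class models, so the coiteration terminates with a common iterate $R^*$. Because $R$ is $A_0$-thick (this is where the thickness hypothesis is actually used), there is no truncation on the main branch of the tree on $R'$, so one obtains a fully elementary iteration map $\pi\colon R'\to R^*$. Now $R^*$ is fully iterable simply because it is an iterate of the fully iterable model $R$, and full iterability of $R'$ follows by pulling back along $\pi$ via the copy construction.

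Your direct approach is not obviously wrong, but the ``standard argument'' you invoke is underspecified precisely at the delicate point: when a normal component of the composition involves truncation on its main branch, the next model does not receive an elementary embedding from the previous proper-class model (only from a set-sized initial segment of some earlier model), so Corollary~\ref{cmt:c1} as stated does not apply, and you have not said how normal iterability propagates in that case. The paper's coiteration trick sidesteps this entirely by reducing everything to the full iterability of $R$, which is given as a hypothesis. Note also that in your argument the thickness of $R$ plays no role beyond the incidental remark that $R'$ inherits it; in the paper's proof, thickness of $R$ is what guarantees the absence of truncation on the $R'$-side of the coiteration, and hence the existence of $\pi$.
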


\begin{proof}
By Corollary~\ref{cmt:c1} and the discussion immediately following, $R'$ is
normally iterable with respect to class-sized normal iteration trees. It
follows that there is an extender model $R^*$ that is a common normal iterate
of both $R$ and $R'$. Since $\Omega$ is $A_0$-thick in $R$, the corresponding
normal iteration tree on $R'$ does not involve any truncation on its main
branch, and there is an iteration map $\pi:R'\to R^*$ that is fully
elementary. Now $R^*$ is fully iterable, as it is an iterate of $R$. The
existence of $\pi$ then guarantees that $R'$ is fully iterable as well. 
\end{proof} 
  
Finally we will need a well-known lemma on the absorption of extenders by the
core model; this criterion is similar to Theorem 8.6 in \cite{cmip}. Although
the proof in \cite{cmip} is inductive, our assumption allow us to avoid this
kind of induction. Additionally, there is no need to impose any variant of the
initial segment condition on the extender we intend to absorb by $\mbfK$. 

\begin{lemma}\label{cmt:abs}
Assume there is no proper class model with a Woodin cardinal. Let $F$ be a
$(\kappa,\lambda)$-extender over $\mbfK$ where $\lambda$ is a
$\mbfK$-cardinal, and let $W$ be a proper class extender 
model witnessing the $A_0$-soundness of $\mbfK\cut\delta$ where $\delta$ is a
cardinal larger than $\lambda$. Assume further that the following hold.
\begin{itemize}
\item[(a)] The phalanx $\langle W,W',\lambda\rangle$ is normally iterable 
           where $W'=\ult(W,F)$.
\item[(b)] $E^W\rst\nu=E^{W'}\rst\nu$ where $\nu=\lambda^{+W'}$. 
\end{itemize} 
Then $F\in W$. 
\end{lemma}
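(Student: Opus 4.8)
The plan is to run the standard "bicephalus/phalanx comparison" argument that shows an extender is absorbed by the core model, adapted to the $A_0$-soundness witness $W$. First I would coiterate the phalanx $\langle W, W', \lambda\rangle$ against $\mbfK$ (equivalently, against $W$ itself, since $W$ witnesses $A_0$-soundness of $\mbfK\cut\delta$ and $\lambda<\delta$). By hypothesis (a) the phalanx is normally iterable, and $\mbfK$ is universal (there is no proper class inner model with a Woodin cardinal), so the coiteration terminates; let $\mcT$ be the tree on the phalanx side and $\mcS$ the tree on the $W$-side. Using thickness of $W$ (hence of $\mbfK\cut\delta$ and of the phalanx's two roots, which agree below $\lambda^{+W}$ by (b)) together with the Dodd--Jensen lemma applied to the iteration strategies, the main branch of $\mcS$ is non-truncating and the branch of $\mcT$ leading to the last model is non-truncating as well, yielding fully elementary iteration maps on both sides and a common last model $Q$.

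The key step is then to analyze which root of the phalanx the comparison "uses". I would argue that the $W'$-root must be the one that maps into $Q$ without dropping: if instead the branch leaving $W$ were non-truncating with a fully elementary map $i\colon W\to Q$, then comparing this with the map out of $W'$ and using (b) (agreement of $E^W$ and $E^{W'}$ up to $\nu=\lambda^{+W'}$) together with the fact that the generators of $F$ are contained in $\lambda$, one derives that no extender is applied to the $W$-root below the image of $\nu$ — so $W$ and $W'$ would have to already agree past $\nu$, contradicting that $W'=\ult(W,F)$ is a genuine ultrapower by an extender with support $\lambda\ge\crp(F)^{+W}$ (in particular $F\neq\varnothing$ forces disagreement at $\nu$). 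Hence there is a fully elementary $j\colon W'\to Q$ and, on the $W$-side, the first extender applied has critical point $\le\lambda$, i.e.\ the comparison "sees" the disagreement created by $F$ at index $\nu$.

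Finally, I would recover $F$ inside $W$ from this comparison data. Since $W$ is $A_0$-thick and fully iterable, the $W$-side map $i\colon W\to Q$ factors through $\ult(W,F^*)$ where $F^*$ is the (length-$\nu$) extender derived from $i$ with critical point $\crp(F)$; on the other hand $j\colon W'\to Q$ and $W'=\ult(W,F)$, so the extender derived from $j\circ(\text{ultrapower map})$ is exactly $F$. Matching the two factorizations at the common model $Q$ and using (b) to identify the relevant extender sequences below $\nu$, we get $F^* = F$. But $F^*$ is the extender of length $\nu$ derived from an iteration map on $W$ composed of extenders on the $W$-sequence, so $F^*$ — hence $F$ — is on the $W$-sequence or is the trivial completion of an extender in $W$; by coherence and the definition of $\nu=\lambda^{+W'}$ this means $F\in W$, as desired.

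The main obstacle I expect is the bookkeeping in the second step: ensuring that the disagreement produced by $F$ at index $\nu$ is genuinely the \emph{least} disagreement in the coiteration of the phalanx with $W$, so that the very first extender used on the $W$-side is the one witnessing $F\in W$ rather than something applied higher up. This requires carefully exploiting hypothesis (b) — that $E^W$ and $E^{W'}$ agree exactly up to $\nu$ — together with the soundness/thickness of $W$ to rule out truncations, much as in the proof that $\mbfK$ is closed under the extenders it "should" contain (cf.\ Steel \cite{cmip} or Zeman \cite{imlc}).
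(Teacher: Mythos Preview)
The paper does not prove this lemma; it simply cites Steel \cite[Theorem~8.6]{cmip}. Your proposal is a sketch of precisely that standard absorption argument, so in that sense you and the paper agree on the approach.

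That said, your final ``recovery'' step has a genuine gap. You write that $F^*$, the length-$\nu$ extender derived from the $W$-side iteration map $i$, ``is the extender of length $\nu$ derived from an iteration map on $W$ composed of extenders on the $W$-sequence, so $F^*$ --- hence $F$ --- is on the $W$-sequence or is the trivial completion of an extender in $W$.'' This inference is not valid: an extender derived from a composition of extenders on the sequence need not itself be on the sequence (or a trivial completion of one). What the argument in Steel actually establishes is that the $W$-side tree consists of \emph{exactly one step}, and that the single extender used is (the trivial completion of) $F$. The route to this is roughly: by thickness and the hull property of $W$, the two maps $i:W\to P$ and $j\circ\pi_F:W\to P$ must coincide, so $\crp(i)=\crp(F)$ and $i\rst\lambda=\pi_F\rst\lambda$ (since $\crp(j)\ge\lambda$). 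Hence the first extender $E$ used on the $W$-side has critical point $\crp(F)$ and is compatible with $F$ on all generators below $\lambda$; the initial segment condition then puts $F$ on the $W$-sequence. A separate argument, again using that all generators of $F$ lie below $\lambda$ and that $j$ moves nothing below $\lambda$, shows no further extenders are applied on either side. Your sketch collapses this one-step analysis into an unjustified assertion, and that is exactly the content you would need to supply.

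Your second step (why the main branch on the phalanx side must go through $W'$) is also somewhat garbled --- the contradiction you describe is not quite the right one --- but the correct argument there is the routine ``two iteration maps out of $W$ into a common thick model must agree, contradicting incompatibility of first extenders'' reasoning, which you gesture at in your final paragraph.
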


\begin{proof}
We give a sketch of the proof. Compare $W$ against the phalanx 
$\langle W,W',\lambda\rangle$. Let $\mcT,\mcU$ be the iteration
trees arising in the comparison where $\mcT$ is on $W$ and $\mcU$ is on the
phalanx. Using the fact that $\Omega$ is $A_0$-thick in $W$ we conclude 
that $\mcT,\mcU$ have a common last model $W^*$, the model $W'$ is on the main
branch of $\mcU$ and the iteration maps $i^\mcT:W\to W^*$ and $i^\mcU:W'\to W^*$
exist. The critical point of $i^\mcU$ is $\ge\lambda$. Using the fact that $W$
witnesses the $A_0$-soundness of $\mbfK\cut\delta$ we conclude that
$\crp(i^\mcT)=\kappa$ and 
\begin{equation}\label{e.itiu-kappa}
i^\mcT(x)=i_\mcU\circ i_F(x)\mbox{ whenever }x\in\ptm(\kappa)\cap W
\end{equation}
where $i_F$ is the ultrapower map associated with $\ult(W,F)$. 
By assumption (b) the first extender $E^\mcT_\alpha$ used on the main branch
of $\mcT$ has length at least $\nu$, and since $\lambda$ is a $W$-cardinal,
the natural length of $E^\mcT_\alpha$ is at least $\lambda$. (Notice that here
the fact that $\lambda$ is a $W$-cardinal is only used to cover the case where
$\alpha=0$.) But then it follows from (\ref{e.itiu-kappa}) combined with the
fact that $\crp(i^\mcT_{\alpha+1,\infty}),\crp(i^\mcU)\ge\lambda$ that $F$ is
an initial 
segment of $E^\mcT_\alpha$. By applying the initial segment condition for
$E^\mcT_\alpha$ along with the fact that $\lht(E^\mcT_0)$ is a cardinal in
$\mcM^\mcT_\alpha$ if $\alpha>0$ we conclude $F\in W$.  
\end{proof}

In general, if $F$ is a $(\kappa,\lambda)$ extender that satisfies condition
(b) above, we say that $F$ {\bf coheres} to $W$.

\section{Universality} \label{sect:univ}

\subsection{Outline}

Throughout this section we will work with a fixed proper class inner model
$\mbfM$ and a cardinal $\delta$ which is regular and uncountable in $\mbfV$.  
In order to guarantee that the core model theory
is applicable, we will additionally assume that in 
$\mbfV$ there is no proper class inner model with a Woodin cardinal.  

Our argument is based on a combination of ideas coming from the proof of
universality of $\mbfK^c$ described in Jensen \cite{anfs} and, in the simpler
form reformulated for measures of order zero, also in Zeman \cite[\S 6.4]{imlc},
further from the proof of universality of $\mbfK^c$ in Mitchell-Schindler
\cite{uemwlc}, and finally from the Mitchell-Schimmerling-Steel proof of
universality of $\mbfK\cut\delta$, see  Schimmerling-Steel \cite{mcm}. But of
course, since we work under significantly restricted circumstances, we need to
do some amount of extra work.   

As it is usual in arguments of this kind, we will replace $\mbfK^{\mbfM}$, the
true core model in $\mbfM$, with a proper class fully iterable extender model
$W$ that in 
$\mbfM$ witnesses the $A_0$-soundness of $\mbfK^{\mbfM}\cut\delta'$ for a
sufficiently large $\delta'>\delta$. This means that,
working in $\mbfM$, the model $W$ extends $\mbfK^{\mbfM}\cut\delta'$, is
$A_0$-thick, and $\delta'\subseteq H^W_1(\Gamma)$ whenever $\Gamma$ is
$A_0$-thick in $W$; here $H^W_1(\Gamma)$ denotes the $\Sigma_1$-hull of
$\Gamma$ in $W$. (See Steel \cite[\S 3]{cmip} for details.) In our case,
taking $\delta'=\delta^+$ will suffice. We recall that
we are using Mitchell-Steel indexing of extenders, as this setting is
convenient when referring to Steel \cite{cmip}.  

Our general strategy is the following:
\begin{itemize}
\item[(A)] Assuming that in $\mbfV$, there is an iterable premouse of size
less than $\delta$  
           that iterates past $W\cut\delta$, we construct a $W$-extender with
	   critical point $\kappa$ and support $\lambda$ such that
	   $F\in\mbfM$ and coheres to $W$. (This does not require that 
	   $\ult(W,F)$ is well-founded.)
\item[(B)] We prove that $\ult(W,F)$ is well-founded and, letting $W'$ be its
           transitive collapse, prove that $\lambda^{+W}=\lambda^{+W'}$ and the
	   phalanx $(W,W',\lambda)$ is normally iterable in the sense of
	   $\mbfM$. We then apply Lemma~\ref{cmt:abs} inside $\mbfM$ to
	   conclude $F\in W$. This will yield a contradiction, as $F$ codes a
	   function which collapses $\lambda^{+W}$.   
\end{itemize}

\subsection{Part (A)}

We begin with (A). The construction in (A) is the same for both parts of
Theorem~\ref{t0.1}; as mentioned above, we will use the Mitchell-Schindler
approach to the proof of universality. Assume that 
\begin{equation}\label{u.e.s}
S\subseteq\ptm_\delta(\delta^+)\cap\mbfM
\end{equation}
is stationary in the sense of $\mbfV$. 

By Lemma~\ref{cmt:l1}, the model $W$ is normally iterable via the uniqueness
strategy in the sense of $\mbfV$, so in particular it is coiterable with any
iterable premouse in $\mbfV$. Assume $N$ is an iterable premouse in $\mbfV$ of
size less than $\delta$ with an iteration strategy $\Sigma$ that iterates past
$\mbfK^{\mbfM}\cut\delta$. This means that there is a pair $(\mcT,\mcU)$ of
normal iteration trees arising in the coiteration of $W$ against $N$, where
$\mcT$ is on $W$ and $\mcU$ is on $N$, $\mcT$ is according to the 
uniqueness strategy for $W$, $\mcU$ is according to $\Sigma$ and the 
length of $\mcU$ is $\delta+1$. We view $\mcT,\mcU$ as unpadded trees, so the
length of $\mcT$ is $\le\delta+1$. Also, we view $\mcT$ as an iteration tree on
$W\cut\delta^+$. 

Fix the following notation:
\begin{itemize}
\item $W'$ is the last model of $\mcT$ and $N'$ is the last model of $\mcU$.   
\item The notation $\kappa^{\mcT}_i,\pi^{\mcT}_{ij}$ is
      self-explanatory, and $\nu^{\mcT}_i$ is the iteration index associated
      with the $\nu_i$-th extender on $\mcT$, that is,
      $\nu^\mcT_i=\lht(E^\mcT_i)$. We of course fix the corresponding
      notation for $\mcU$. 
\item $W_i=M^{\mcT}_i$ and $N_i=M^{\mcU}_i$. 
\item $T$, resp. $U$, is the tree structure associated with the iteration tree 
      $\mcT$, resp. $\mcU$, and $<_T$, $<_U$ are the respective 
      tree orderings.
\item $b^{\mcT}$ is the main branch of $\mcT$ and $b^{\mcU}$ is the main
      branch of $\mcU$. Note that $\delta\in b^\mcU$.   
\item For any $i<\delta$ we write $\xi^{\mcT}(i)$, resp. $\xi^{\mcU}(i)$,
      to denote the immediate $<_T$-, resp. $<_U$-, predecessor of $i+1$. 
\end{itemize}  

\begin{lemma}\label{u.l.term}  
Under the above setting, the following hold:
\begin{itemize}
\item[(a)] There is no truncation point on $b^\mcT$. 
\item[(b)] For every $\alpha<\delta$ there is $i+1\in b^\mcT$ such that 
           $\pi^\mcT_{0,\xi(i)}(\alpha)<\kappa^\mcT_i=
            \crp(\pi^\mcT_{\xi(i),b^\mcT})$. Equivalently,  
	   $\pi^\mcT_{b^\mcT}(\delta)=\delta$. 
\end{itemize}
\end{lemma}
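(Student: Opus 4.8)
The plan is to exploit the stationarity of $S\subseteq\ptm(\delta^+)\cap\mbfM$ together with standard properties of the coiteration $(\mcT,\mcU)$ of $W$ against $N$. The key structural fact I would use is that $\mcU$, the tree on the small premouse $N$ (of size $<\delta$), has length $\delta+1$; since $N$ is small and $\delta$ is regular and uncountable, the main branch $b^\mcU$ must eventually stabilize the critical points, i.e.\ the generators of the extenders used along $\mcU$ are cofinal in $\delta$ and $\pi^\mcU_{b^\mcU}(\crp)$-type ordinals climb up to $\delta$. The aim is to transfer this behavior to the $W$-side via the usual comparison bookkeeping: at each stage $i<\delta$ one of the two sides moves, and because the coiteration lasts $\delta+1$ steps, the ``active'' side alternates in a controlled way. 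I would first show $\pi^\mcT_{b^\mcT}(\delta)\ge\delta$ using that the models $W_i=M^\mcT_i$ agree with $N_i=M^\mcU_i$ below the current iteration index, and that the $N$-side pushes agreement up to $\delta$; hence $W'=M^\mcT_{b^\mcT}$ must have $\delta$ in the range of $\pi^\mcT_{b^\mcT}$ only if $\pi^\mcT_{b^\mcT}(\delta)\ge\delta$, and continuity/the fact that $\delta$ is regular then forces equality, which by the standard characterization is equivalent to part~(b).

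More concretely, for part~(b) I would argue contrapositively: if for some $\alpha<\delta$ the value $\pi^\mcT_{0,\xi(i)}(\alpha)$ were $\ge\kappa^\mcT_i$ for all $i+1\in b^\mcT$, then $\alpha$ would be moved by $\pi^\mcT_{b^\mcT}$ to something $\ge\delta$, so $\pi^\mcT_{b^\mcT}$ would not be continuous at $\delta$ and $\pi^\mcT_{b^\mcT}(\delta)>\delta$. But then, picking a point $x\in S$ with $x\cap\delta^+$ closed under the relevant Skolem functions and of the right cofinality, one collapses and uses that $x\cap\delta^+\in\mbfM$ together with the minimality of the coiteration to see that the critical points on $b^\mcT$ are in fact cofinal in $\delta$, contradicting the assumption. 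The cofinality and $\mbfM$-membership of $x$ are exactly what is needed so that the collapsed/condensed picture lives in $\mbfM$ and the iteration trees restrict correctly; this is where the set $S_\delta$ (rather than all of $\ptm_\delta(\delta^+)\cap\mbfM$) earns its keep.

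For part~(a), no truncation on $b^\mcT$: I would use that $W$ is universal-like for this comparison because $W$ is $A_0$-thick and fully iterable, so in the coiteration the $W$-side never drops on its main branch — this is the familiar fact that a thick, fully iterable extender model does not truncate on the main branch of a successful coiteration (as in Steel~\cite{cmip} and Schimmerling-Steel~\cite{mcm}). Concretely, a truncation on $b^\mcT$ would make the last model $W'$ a proper initial segment image with a projectum drop, incompatible with $\delta^+\subseteq H^{W}_1(\Gamma)$ for $A_0$-thick $\Gamma$ being preserved along the non-dropping tree; combined with (b), which pins down $\pi^\mcT_{b^\mcT}(\delta)=\delta$, one gets that $b^\mcT$ is a genuine (non-dropping) branch. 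I would prove (a) and (b) together, since the thickness argument for (a) and the cofinality-of-critical-points argument for (b) feed into each other.

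The main obstacle I anticipate is making precise the transfer of ``generators cofinal in $\delta$'' from the $\mcU$-side to the $\mcT$-side while simultaneously ruling out truncations — in the restricted setting here (no appeal to a measurable in $\mbfV$, only the stationarity of $S$), the usual Mitchell-Schindler/Schimmerling-Steel arguments have to be re-run carefully, and the interaction between the condensation point $x\in S$, the requirement $\cof^{\mbfM}(x\cap\delta)>\omega$, and the length-$(\delta+1)$ bookkeeping of the coiteration is delicate. I expect that step — showing $\kappa^\mcT_i$ is cofinal in $\delta$ along $b^\mcT$ using a stationary set of condensation points in $\mbfM$ of uncountable $\mbfM$-cofinality — to be the crux, with everything else being adaptation of known comparison arguments.
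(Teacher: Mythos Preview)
Your proposal substantially overcomplicates what the paper treats as routine: the paper's entire proof is the single sentence ``Both clauses follow by the standard argument that proves the termination of the comparison process.'' This is the familiar hull argument, carried out entirely in $\mbfV$, and it uses nothing beyond the regularity of $\delta$ and the fact that $\card(N)<\delta$. Since $\card(N)<\delta$ and $\delta$ is regular, for every $i<\delta$ the model $N_i$ has height below $\delta$, so all iteration indices $\nu_i$ lie below $\delta$; from this one reads off that each extender used along $b^\mcT$ has length below $\delta$, hence $\pi^\mcT_{0,i}(\delta)=\delta$ for every $i\in b^\mcT$ and therefore $\pi^\mcT_{b^\mcT}(\delta)=\delta$, which is (b). For (a), if $b^\mcT$ dropped then the last models on both sides would be sound set-sized premice projecting below $\delta$ and agreeing up to $\delta$, and the usual compatible-extenders contradiction from the termination proof fires.

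None of this uses the stationarity of $S_\delta$, the condition $\cof^\mbfM(x\cap\delta)>\omega$, membership in $\mbfM$, or thickness of $W$. Your claim that ``this is where the set $S_\delta$ \dots\ earns its keep'' is misplaced: $S_\delta$ plays no role whatsoever in this lemma. It enters only much later, in the frequent-extension argument of part~(B), where one needs countable subsets of $X\cap\tau_X$ to be coverable inside $X$ in the sense of $\mbfM$; see the paragraph surrounding (\ref{u.e.down}) and the Remark following the outline of the proof of Theorem~\ref{t0.1}(a). Similarly, thickness of $W$ is invoked later (to rule out $W$ lying on both main branches when comparing $(W,Q_X,\kappa^*)$ against $(W,N^*_X,\kappa^*)$), not here. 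Bringing $\mbfM$ into the argument is especially problematic, since $\mcT$ and $\mcU$ are constructed in $\mbfV$ and need not belong to $\mbfM$ at all; there is no ``collapsed/condensed picture living in $\mbfM$'' to exploit.

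There is also a logical gap in your sketch of (b): you propose to show that the critical points along $b^\mcT$ are cofinal in $\delta$ and assert this ``contradict[s] the assumption'' that some $\alpha$ has $\pi^\mcT_{b^\mcT}(\alpha)\ge\delta$. But cofinality of the critical points alone does not yield $\pi^\mcT_{b^\mcT}(\alpha)<\delta$; a single $\alpha$ could in principle be pushed up by each successive extender. What one actually needs is that every extender used has length below $\delta$ (so that $\delta$, being a cardinal in each $W_i$, is fixed), and this comes straight from $\card(N)<\delta$, not from any hull chosen in $S_\delta$.
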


\begin{proof}
Both clauses follow by standard arguments similar to those used in the proof
of the termination of the comparison process and in the proof of universality. 
\end{proof}

Using a pressing down argument, we obtain a club
$C^*\subseteq b^\mcU$ which is a thread through the set of critical
points. Precisely, $\pi^\mcU_{\xi,\xi'}(\kappa^\mcU_i)=\kappa^\mcU_j$ whenever 
$\xi<\xi'$ are elements of $C^*$ such that $\xi=\xi^\mcU(i)$ and
$\xi'=\xi^\mcU(j)$. Of course $\min(C^*)\ge\xi^\mcU(i^\mcU)$ where
$i^\mcU+1$ is the last truncation point on $b^\mcU$. We will additionally
assume that elements of $C^*$ satisfy the following:

\begin{itemize}
\item[(i)] $\xi=\kappa^\mcU_i$ whenever $\xi\in C^*$ and $i$ is such that
               $\xi=\xi^\mcU(i)$,  hence $\xi=\crp(\pi^\mcU_{\xi,\xi'})$
	       for all $\xi'\in b^\mcU-(\xi+1)$. 
\item[(ii)] $\pi^\mcT_{0,\xi}(\xi)=\xi$.
\item[(iii)] $\xi$ is a limit point of the critical points on $b^\mcT$ if
             $\lht(\mcT)=\delta+1$, and $\xi>\lht(E^\mcT_\alpha)$ for all
	     $\alpha<\lht(\mcT)$ if $\lht(\mcT)<\delta$.  
\end{itemize}

That these requirements may be imposed without loss of generality follows from
the fact that they are true on a club. For (i) this is
obvious,  and for (ii) and (iii) we appeal to the proof of
Lemma~\ref{u.l.term}. Notice 
also that each element of $C^*$ is an inaccessible cardinal in $N'$, and since
$N'$ agrees with $W'$ below $\delta$, also in $W'$. 

\begin{lemma}\label{l4}
Given any $\xi\in C^*$, let $\tau_\xi=\xi^{+W}$, and $\tau'_\xi=\xi^{+W_\xi}$
if $\lht(\mcT)=\delta+1$ and $\tau'_\xi=\xi^{+W'}$ if $\lht(\mcT)<\delta$. 
Then for every $\xi\in C^*$ the following is true:  
\begin{itemize}
\item[(a)] $\xi$ is inaccessible in $W_i$ for all $i\in b^\mcT$. 
\item[(b)] $\tau'_\xi=\xi^{+N_\xi}=\xi^{+N'}$. 
\end{itemize}
\end{lemma}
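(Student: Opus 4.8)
The plan is to exploit the combination of ingredients already secured: (1) the coiteration $(\mcT,\mcU)$ produces models $W'$ and $N'$ with $N'\trianglelefteq W'$ and $N'$ agreeing with $W'$ below $\delta$; (2) the club $C^*\subseteq b^\mcU$ threads the critical points and satisfies (i)--(iii); and (3) Lemma~\ref{u.l.term}, which says there is no truncation on $b^\mcT$ and that $\pi^\mcT_{b^\mcT}(\delta)=\delta$, so each $\alpha<\delta$ is eventually below a critical point used on $b^\mcT$. Fix $\xi\in C^*$.

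First I would prove (a). Each $\xi\in C^*$ is inaccessible in $N'$ by the remark following the imposition of (i)--(iii), hence inaccessible in $W'$ since $W'$ and $N'$ agree below $\delta$ and $\xi<\delta$. To push this down to $W_i$ for $i\in b^\mcT$: inaccessibility is downward absolute to inner models that compute $\xi^+$ correctly, so it suffices to see that $\xi$ remains a cardinal, indeed a limit cardinal, in each $W_i$, and that $\xi$ is regular there. By clause (iii) of the choice of $C^*$, $\xi$ is a limit of critical points $\kappa^\mcT_j$ appearing on $b^\mcT$; for $j$ with $\kappa^\mcT_j<\xi$, the extender $E^\mcT_j$ has length $\nu^\mcT_j$ a cardinal in the relevant models, and one checks using the rules of the coiteration (comparison by least disagreement, Mitchell--Steel indexing) that these indices accumulate to $\xi$ while no extender with index in the interval $(\kappa^\mcT_j,\xi)$ disturbs cardinals up to $\xi$; combined with clause (ii), $\pi^\mcT_{0,\xi}(\xi)=\xi$, and the usual agreement-of-models lemma for iteration trees (the models $W_i$ for $i$ past the stage where $\xi$ is "stabilized" all agree below $\xi^{+}$), this gives that $\xi$ is inaccessible in $W_i$ for every $i\in b^\mcT$. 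The key point to extract carefully here is that the extenders applied along $\mcT$ with critical point below $\xi$ are "short" relative to $\xi$ in the sense that their supports lie below $\xi$, so they cannot collapse $\xi$ or change its cardinal successor among the tree models past the relevant stage.

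Next I would prove (b). We must show $\tau'_\xi := \xi^{+W_\xi} = \xi^{+N_\xi} = \xi^{+N'}$. The equality $\xi^{+N_\xi}=\xi^{+N'}$ follows because, by clause (i) of the choice of $C^*$, $\xi=\kappa^\mcU_i=\crp(\pi^\mcU_{\xi,\xi'})$ for all $\xi'\in b^\mcU-(\xi+1)$, so the iteration maps on $b^\mcU$ past stage $\xi$ have critical point $\xi$ and therefore fix $\xi^{+N_\xi}$ (an iteration map with critical point $\xi$ is continuous at $\xi^+$ of its domain when that successor is not collapsed, and here it is not, since $\xi$ remains inaccessible hence a limit cardinal along the branch); alternatively and more robustly, the models $N_\xi$ and $N'$ agree below $(\xi^{+})^{N_\xi}+1$ by the standard agreement lemma for the extenders used on $b^\mcU$ above stage $\xi$, all of which have length exceeding $\xi^{+N_\xi}$. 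For $\xi^{+W_\xi}=\xi^{+N_\xi}$: by the coiteration, $W_\xi$ and $N_\xi$ agree below the index of their least disagreement, which is at least the sup of the extender indices used so far; since $\xi$ is a critical point of an extender used at stage $\xi$ on the $\mcU$-side and the indices at that stage are well above $\xi^{+}$ of the common part, both models compute $\xi^{+}$ the same way. Finally, to identify $\tau_\xi=\xi^{+W}$ with $\tau'_\xi=\xi^{+W_\xi}$ is not asserted in (b), so I would only need the three stated equalities; the notation $\tau_\xi$ is introduced for later use.

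The main obstacle I anticipate is bookkeeping about \emph{which} extenders on the two trees have indices (and supports) above versus below $\xi$ and $\xi^+$, i.e., making precise the "agreement of models" at the right level so that cardinal successors are genuinely preserved along the branches. This is where clauses (i)--(iii) on $C^*$ do the real work, and the argument should reduce to invoking the standard agreement lemmas for normal iteration trees (as in Mitchell--Steel \cite{fsit} or Steel \cite{cmip}) together with Lemma~\ref{u.l.term}; once the correct level of agreement is pinned down, both (a) and (b) are routine. I do not expect to need any new idea beyond careful application of what has already been set up.
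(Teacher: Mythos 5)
Your approach matches the paper's: observe (as was noted just before the lemma) that $\xi$ is inaccessible in $N'$ and hence in $W'$ by agreement below $\delta$, then propagate back along $b^\mcT$ using $\ptm(\xi)$-agreement between successive models (all extenders applied past stage $\xi$ have critical point $\ge\xi$) together with clause (ii), $\pi^\mcT_{0,\xi}(\xi)=\xi$, to cover the models below stage $\xi$ on the branch, and (b) falls out of the same agreement facts on both sides. One correction worth noting: in (b), your primary justification that an iteration map with critical point $\xi$ ``fixes $\xi^{+N_\xi}$'' by continuity is not right --- $\pi^\mcU_{\xi,\delta}$ moves $\xi$, so it does not fix $\xi^{+N_\xi}$ pointwise, and continuity is irrelevant --- but your fallback argument that $N_\xi$ and $N'$ agree below $(\xi^+)^{N_\xi}$ because $\ptm(\xi)$ is unchanged under ultrapowers with critical point $\ge\xi$ is exactly the correct (and the paper's) reason, so the gap is self-repaired.
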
 

\begin{proof}
We do the proof for the case where $\lht(\mcT)=\delta+1$. If
$\lht(\mcT)<\delta$ the proof is similar but easier. 
Let $\xi\in C^*$ and $i$ be such that $\xi=\xi^\mcT(i)$. Since the critical
points on $b^\mcT$ are cofinal in $\xi$, necessarily $\kappa^\mcT_i\ge\xi$. The
critical point of the map $\pi^\mcT_{i+1,\delta}$ is above $\xi$, and $\xi$ is
inaccessible in $W'$ by the discussion immediately above the statement of the
lemma, so $\xi$ is an
inaccessible cardinal in $W_{i+1}$. Since $W_{i+1}=\ult(W_\xi,E^\mcT_i)$ and
$E^\mcT_i$ is an extender with critical point at least $\xi$, the ordinal $\xi$ 
is inaccessible in $W_\xi$. Since $\pi^\mcT_{0,\xi}(\xi)=\xi$, the ordinal
$\xi$ is inaccessible in all models $W_i$ for $i<_T\xi$. This proves
(a). Regarding (b), we have 
$\ptm(\xi)\cap W_{\xi^\mcT(i)}=\ptm(\xi)\cap W_{i+1}$, by our choice of
$\xi$ and general properties of iterations trees. Since
$\crp(\pi^\mcT_{i+1,\delta})>\xi$, we conclude  
$\tau'_\xi=\xi^{+W'}=\xi^{+N'}=\xi^{+N_\xi}$, where the last equality follows by
the same considerations for $\mcU$ as have been just done for 
$\mcT$. 
\end{proof}

The initial setting recorded above is more or less the same as in Jensen's
proof of universality of $\mbfK^c$ in Jensen \cite{anfs}. Although the next two
lemmata follow the general scenario of his proof, the arguments we use are significantly more local, 
and we need to do this for two reasons. First, we work in $\mbfV$ but on the other 
hand, the model whose universality we want to establish is in $\mbfM$, which
restricts our freedom in choosing the structures to work with. Second, we 
work below a regular cardinal $\delta$ given in advance which -- unlike the
situation in Jensen's proof -- we would like to be as small as possible, and
therefore it need not have any reasonable closure
properties. We stress that we are not making any assumptions on cardinal
arithmetic. These two lemmata comprise one of the key observations in
our argument.  

\begin{lemma}\label{u.l.pullback-1} 
Let $\xi\in\lim(C^*)$ and $\xi<\zeta<\tau_\xi$. Then there is a
$\xi^*\in C^*$ such that $\xi^*<\xi$ and, for all $\bar{\xi}\in C^*$ with 
$\xi^*\le\bar{\xi}<\xi$, we have 
 $$ (\pi^\mcU_{\bar{\xi},\xi})^{-1}\circ\pi^\mcT_{0,\xi}(\zeta)
  \in\rng(\pi^\mcT_{0,\bar{\xi}}). $$ 
\end{lemma}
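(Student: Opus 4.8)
The plan is to exploit the two features of $C^*$ recorded after Lemma~\ref{l4}: every $\xi\in C^*$ is the critical point of the main $\mcU$-branch past $\xi$, and the tail maps $\pi^\mcU_{\bar\xi,\xi}$ for $\bar\xi<\xi$ in $C^*$ have critical point $\bar\xi$ and are continuous at limit stages of $C^*$. Fix $\xi\in\lim(C^*)$ and $\zeta$ with $\xi<\zeta<\tau_\xi=\xi^{+W}$. Since $\xi=\pi^\mcT_{0,\xi}(\xi)$ and $\tau_\xi=\pi^\mcT_{0,\xi}(\tau_\xi)$ would be the natural targets, one first notes that $\pi^\mcT_{0,\xi}$ is continuous at $\tau_\xi$ in the relevant sense (the critical points on $b^\mcT\cap\xi$ are cofinal in $\xi$ by Lemma~\ref{l4}(a) and its proof, so $\pi^\mcT_{0,\xi}``\tau_\xi$ is cofinal in $(\xi^+)^{W_\xi}=\tau'_\xi$, and in fact $\pi^\mcT_{0,\xi}(\zeta)<\tau'_\xi$). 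Thus $\eta:=\pi^\mcT_{0,\xi}(\zeta)$ is an ordinal below $\tau'_\xi=\xi^{+N_\xi}=\xi^{+N'}$ (using Lemma~\ref{l4}(b)), which is exactly where the $\mcU$-side tail maps act nontrivially, since $\xi=\crp(\pi^\mcU_{\xi,\xi'})$.

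Next I would push $\eta$ back along the $\mcU$-side. For each $\bar\xi\in C^*$ with $\bar\xi<\xi$, consider $\pi^\mcU_{\bar\xi,\xi}$, whose critical point is $\bar\xi$. Because $\xi\in\lim(C^*)$ and the critical points along $b^\mcU$ are threaded by $C^*$, the maps $\pi^\mcU_{\bar\xi,\xi}$ form a directed system whose direct limit over $\bar\xi\in C^*\cap\xi$ recovers (the relevant initial segment of) $N_\xi$ at the common cardinal $\xi$; in particular $(\xi^+)^{N_\xi}=\sup\{\,\pi^\mcU_{\bar\xi,\xi}((\xi^+)^{N_{\bar\xi}})? \,\}$ is wrong as stated — rather, since $\crp(\pi^\mcU_{\bar\xi,\xi})=\bar\xi<\xi$ and all these ordinals below $\tau'_\xi$ are in the range for $\bar\xi$ large enough, continuity gives: there is $\xi^*\in C^*\cap\xi$ so that $\eta\in\rng(\pi^\mcU_{\bar\xi,\xi})$ for every $\bar\xi\in C^*$ with $\xi^*\le\bar\xi<\xi$. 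Write $\eta_{\bar\xi}=(\pi^\mcU_{\bar\xi,\xi})^{-1}(\eta)$; this is an ordinal $<\tau'_{\bar\xi}=\bar\xi^{+N_{\bar\xi}}=\bar\xi^{+W_{\bar\xi}}$, the last equality again by (the $\mcT$-analogue of) Lemma~\ref{l4}.

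Finally I would transfer $\eta_{\bar\xi}$ to the $W$-side. The point of condition (ii) in the definition of $C^*$, $\pi^\mcT_{0,\bar\xi}(\bar\xi)=\bar\xi$, together with Lemma~\ref{l4}, is that $W_{\bar\xi}$ and $N_{\bar\xi}$ agree below $\bar\xi^{+}$ (both equal the corresponding initial segment of $W'=N'$ below $\delta$, as $\bar\xi$ is inaccessible there and no extender on $b^\mcT$ or $b^\mcU$ above stage $\bar\xi$ has critical point $\le\bar\xi$). Hence $\eta_{\bar\xi}<\bar\xi^{+W_{\bar\xi}}$ lies in the common part, and by continuity of $\pi^\mcT_{0,\bar\xi}$ at $\bar\xi^{+W_{\bar\xi}}$ (critical points on $b^\mcT\cap\bar\xi$ cofinal in $\bar\xi$, Lemma~\ref{l4}(a)), we get $\eta_{\bar\xi}\in\rng(\pi^\mcT_{0,\bar\xi})$, which is the desired conclusion $(\pi^\mcU_{\bar\xi,\xi})^{-1}\circ\pi^\mcT_{0,\xi}(\zeta)\in\rng(\pi^\mcT_{0,\bar\xi})$. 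I expect the main obstacle to be the bookkeeping needed to verify the two continuity claims — that $\pi^\mcT_{0,\xi}$ maps $\tau_\xi$ cofinally into $\tau'_\xi$ and that $\eta$ eventually enters the ranges of the $\pi^\mcU_{\bar\xi,\xi}$ uniformly for $\bar\xi\ge\xi^*$ — since both rely on carefully combining the choice clauses (i)–(iii) for $C^*$ with the cofinality-of-critical-points facts in Lemma~\ref{l4} and its proof, rather than on any single clean lemma. One must also check that the agreement $W_{\bar\xi}\cut\bar\xi^{+}=N_{\bar\xi}\cut\bar\xi^{+}$ really holds at \emph{every} $\bar\xi\in[\xi^*,\xi)\cap C^*$ and not merely cofinally, which again comes down to the thread property of $C^*$ through the critical points of both trees.
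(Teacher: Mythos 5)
Your proposal correctly identifies the skeleton of the argument: push $\eta=\pi^\mcT_{0,\xi}(\zeta)<\tau'_\xi$ back along the $\mcU$-side to get $\eta_{\bar\xi}=(\pi^\mcU_{\bar\xi,\xi})^{-1}(\eta)<\tau'_{\bar\xi}$, use the agreement of $W_{\bar\xi}$ and $N_{\bar\xi}$ below $\bar\xi^+$ from Lemma~\ref{l4}, and conclude that $\eta_{\bar\xi}\in\rng(\pi^\mcT_{0,\bar\xi})$. Steps 1 and 2 are fine. But the final step has a genuine gap: you appeal to ``continuity of $\pi^\mcT_{0,\bar\xi}$ at $\bar\xi^{+W_{\bar\xi}}$'' to conclude that $\eta_{\bar\xi}\in\rng(\pi^\mcT_{0,\bar\xi})$. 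Continuity here only gives that $\pi^\mcT_{0,\bar\xi}$ maps $\tau_{\bar\xi}$ \emph{cofinally} into $\tau'_{\bar\xi}$; it does not give surjectivity. The iteration map $\pi^\mcT_{0,\bar\xi}$ is in general far from onto on the ordinals below $\tau'_{\bar\xi}$, and the mere fact that $\eta_{\bar\xi}<\tau'_{\bar\xi}$ tells you nothing about whether $\eta_{\bar\xi}$ hits the range. (Being a limit of critical points only makes $W_{\bar\xi}$ a direct limit of $W_i$, $i\in b^\mcT\cap\bar\xi$, so elements lie in $\rng(\pi^\mcT_{i,\bar\xi})$ for some $i<_T\bar\xi$, not in $\rng(\pi^\mcT_{0,\bar\xi})$.) Nothing in your argument ties the $\mcU$-side preimage $\eta_{\bar\xi}$ to the $\mcT$-side image of anything; that link is exactly what needs to be supplied.

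The paper supplies it by coding: choose $a\in W$ with $a\subseteq\xi\times\xi$ a well-ordering of $\xi$ of order type $\zeta$, and let $\xi^*$ be least in $C^*$ with $\pi^\mcT_{0,\xi}(a)\in\rng(\pi^\mcU_{\xi^*,\xi})$. Then for $\bar\xi\in[\xi^*,\xi)\cap C^*$, the $\mcU$-side preimage of $\pi^\mcT_{0,\xi}(a)$ is $\pi^\mcT_{0,\xi}(a)\cap\bar\xi$, which (using $\crp(\pi^\mcT_{\bar\xi,\xi})\ge\bar\xi$ and $\pi^\mcT_{0,\bar\xi}(\bar\xi)=\bar\xi$) equals $\pi^\mcT_{0,\bar\xi}(a\cap\bar\xi)$ --- an honest $\pi^\mcT_{0,\bar\xi}$-image. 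Taking order types inside the models, one gets $(\pi^\mcU_{\bar\xi,\xi})^{-1}\circ\pi^\mcT_{0,\xi}(\zeta)=\pi^\mcT_{0,\bar\xi}(\otp(a\cap\bar\xi))$, which is manifestly in $\rng(\pi^\mcT_{0,\bar\xi})$. So the issue you flagged as ``bookkeeping'' is in fact the heart of the lemma, and it requires the extra idea of coding $\zeta$ by a bounded object in $W$ before pushing down.
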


\begin{proof}
Since $\xi<\zeta<\tau_\xi$, there is a set $a\in W$ such that 
$a\subseteq\xi\times\xi$ and $a$ is a well-ordering of $\xi$ of order-type 
$\zeta$. Let $\xi^*$ be the least element $\xi'$ of $C^*$ such that 
$\pi^\mcT_{0,\xi}(a)\in\rng(\pi^\mcU_{\xi',\xi})$. Then for
$\bar{\xi}\in[\xi^*,\xi)\cap C^*$ we have 
\[
\pi^\mcT_{0,\xi}(a)=\pi^\mcU_{\bar{\xi},\xi}(\pi^\mcT_{0,\xi}(a)\cap\bar{\xi})=
 \pi^\mcU_{\bar{\xi},\xi}\circ\pi^\mcT_{0,\bar{\xi}}(a\cap\bar{\xi}).
\] 
Here the first equality follows from the fact that 
$\bar{\xi}=\crp(\pi^\mcU_{\bar{\xi},\xi})$ and
$\pi^\mcU_{\bar{\xi},\xi}(\bar{\xi})=\xi$. To see the second equality notice
that  $\crp(\pi^\mcT_{\bar{\xi},\xi})\ge\bar{\xi}$, so 
$\pi^\mcT_{0,\xi}(a)\cap\xi=\pi^\mcT_{0,\bar{\xi}}(a)$. 
By the elementarity of all relevant maps, $a\cap\bar{\xi}$ is a well-ordering
of $\bar{\xi}$. Letting $\bar{\zeta}$ be the order-type of $a\cap\bar{\xi}$,
we conclude that $(\pi^\mcU_{\bar{\xi},\xi})^{-1}\circ\pi^\mcT_{0,\xi}(\zeta)
  =\pi^\mcT_{0,\bar{\xi}}(\bar{\zeta})$. 
  \end{proof}   

Given a map $f$ and a set $x$, we will write $f[x]$ to denote the pointwise
image of $x$ under $f$. 

\begin{lemma}\label{u.l.pullback-2} 
There is a $\xi^*\in C^*$ such that for every $\xi,\xi'\in C^*$ with 
$\xi^*\le\xi<\xi'$ we have 
$\pi^\mcU_{\xi,\xi'}\circ\pi^\mcT_{0,\xi}[W\cut\tau_\xi]\subseteq
 \pi^\mcT_{0,\xi'}[W\cut\tau_{\xi'}]$. Moreover, the set on the left is a
cofinal subset of the set on the right. 
\end{lemma}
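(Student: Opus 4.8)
The plan is to build the ordinal $\xi^*$ by a single pressing-down argument on $b^\mcU$, refining the thread $C^*$ one more time, and then to verify the two claimed inclusions using the pullback obtained in Lemma~\ref{u.l.pullback-1} together with the agreement of the models below $\delta$ recorded in Lemma~\ref{l4}. First I would note that for $\xi\in C^*$ the structure $W\cut\tau_\xi$ has cardinality $\xi$ in $W'$ (hence in $N'$, and hence in the relevant $W_\xi$, $N_\xi$, by Lemma~\ref{l4}), so its pointwise image $\pi^\mcT_{0,\xi}[W\cut\tau_\xi]$ is a subset of $W_\xi$ of size $\xi$; the point of the lemma is to organize these images coherently along the branch. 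The natural candidate for $\xi^*$ comes from the function that to each $\xi\in C^*$ assigns the least $\xi'\in C^*$ below $\xi$ (if any) for which $\pi^\mcT_{0,\xi}[W\cut\tau_\xi]$ fails to be contained in $\pi^\mcU_{\xi',\xi}\circ\pi^\mcT_{0,\xi'}[W\cut\tau_{\xi'}]$ "as far down as it should be"; pressing down on the club $\lim(C^*)\cap b^\mcU$ stabilizes this, and taking $\xi^*$ above the stabilized value gives the tail of $C^*$ we want.

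Next I would prove the inclusion itself. Fix $\xi^*\le\xi<\xi'$ in $C^*$ and let $x\in W\cut\tau_\xi$; I want $\pi^\mcU_{\xi,\xi'}(\pi^\mcT_{0,\xi}(x))\in\pi^\mcT_{0,\xi'}[W\cut\tau_{\xi'}]$. Coding $x$ by (a subset of $\xi$ coding) a pair $\langle\zeta,y\rangle$ with $\zeta<\tau_\xi$ an ordinal and $y$ a real/bounded subset used to locate $x$ inside $W\cut\zeta$, apply Lemma~\ref{u.l.pullback-1} at $\xi'$ (with the ordinal $\zeta'=(\pi^\mcU_{\xi,\xi'})^{\pm1}$-image of $\zeta$, or more simply: run the lemma directly at the pair $(\xi,\xi')$) to get that $(\pi^\mcU_{\xi,\xi'})^{-1}\circ\pi^\mcT_{0,\xi'}(\text{relevant object})\in\rng(\pi^\mcT_{0,\xi})$. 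The key commutation is $\pi^\mcT_{0,\xi'}=\pi^\mcU_{\xi,\xi'}\circ\pi^\mcT_{0,\xi}$ \emph{on objects below $\xi$} — this holds because $\crp(\pi^\mcT_{\xi,\xi'})\ge\xi$ on the $\mcT$-side and $\xi=\crp(\pi^\mcU_{\xi,\xi'})$, $\pi^\mcU_{\xi,\xi'}(\xi)=\xi'$ on the $\mcU$-side, exactly as exploited in the proof of Lemma~\ref{u.l.pullback-1}. Feeding $x$ (as a bounded subset of $\tau_\xi$, presented via a well-ordering of $\xi$ of length $\zeta$ as in that proof) through this machinery produces $\bar x\in W\cut\tau_{\xi'}$ with $\pi^\mcT_{0,\xi'}(\bar x)=\pi^\mcU_{\xi,\xi'}(\pi^\mcT_{0,\xi}(x))$, which is the desired inclusion.

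For the cofinality ("moreover") clause I would argue that $\pi^\mcT_{0,\xi'}[W\cut\tau_{\xi'}]$ has no members cofinally below it that escape the left-hand image: given $w\in W\cut\tau_{\xi'}$, since $\tau_{\xi'}=(\xi')^{+W}=(\xi')^{+W'}$ and $\xi'$ is a limit point of the $C^*$-sequence of critical points, $w$ lies in $W\cut\zeta'$ for some $\zeta'<\tau_{\xi'}$ that is (by Lemma~\ref{u.l.pullback-1}) pulled back into $\rng(\pi^\mcT_{0,\xi})$ via $(\pi^\mcU_{\xi,\xi'})^{-1}$ once $\xi$ is large enough in $C^*$; taking unions over the tail of $C^*$ below $\xi'$, every element of $W\cut\tau_{\xi'}$ is eventually captured, so $\pi^\mcT_{0,\xi'}[W\cut\tau_{\xi'}]=\bigcup\{\pi^\mcU_{\xi,\xi'}\circ\pi^\mcT_{0,\xi}[W\cut\tau_\xi]\mid\xi\in C^*\cap[\xi^*,\xi')\}$, which gives both inclusion and cofinality at once.

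\textbf{Main obstacle.} The delicate point is bookkeeping the two \emph{different} branches simultaneously: the maps $\pi^\mcT_{0,\xi}$ act on the $W$-side while the "glue" maps $\pi^\mcU_{\xi,\xi'}$ come from the $N$-side, and they only commute on the part of the universe below $\xi$ (equivalently below $\tau_{\xi^*}$-images), so one must be careful that the well-orderings of $\xi$ used to code elements of $W\cut\tau_\xi$ genuinely live in $W$ (they do, since $\zeta<\tau_\xi=\xi^{+W}$) and that $\pi^\mcT_{0,\xi}$ of such a well-ordering is moved correctly by $\pi^\mcU_{\xi,\xi'}$ — this is exactly what Lemma~\ref{u.l.pullback-1} was isolated to handle, so the real work is just assembling it uniformly, i.e. getting a \emph{single} $\xi^*$ that works for all pairs rather than one $\xi^*$ per $\zeta$. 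The pressing-down step is what upgrades the per-$\zeta$ conclusion of Lemma~\ref{u.l.pullback-1} to the uniform statement.
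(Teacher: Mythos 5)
Your proposal identifies the right obstacle (uniformizing the per-$\zeta$ conclusion of Lemma~\ref{u.l.pullback-1} to a single $\xi^*$), but the single pressing-down you propose does not actually resolve it, and two of your intermediate claims are either vacuous or circular. Here is where the gaps are.

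\textbf{The uniformization step is not a single Fodor argument.} Lemma~\ref{u.l.pullback-1} gives, for \emph{each} $\zeta<\tau_\xi$, a threshold $\xi^*(\zeta,\xi)$ below which the pullback lands in $\rng(\pi^\mcT_{0,\bar\xi})$. To convert this into one $\xi^*$ that works simultaneously for cofinally many $\zeta$ (and uniformly in $\xi$), the paper's proof first pushes everything back down to $\gamma=\min(C^*)$, so that the ordinals $\zeta$ can be indexed by $\beta<\tau'_\gamma$ rather than by a set whose size grows with $\xi$. Then for each fixed $\beta$ it presses down (Fodor on $\lim(C^*)$) \emph{and} applies a pigeonhole argument using $\card(N_{\xi_\beta})<\delta$ to extract a stabilized $\zeta'_\beta$ and a stationary set $A_\beta$; this yields a coherent sequence $\langle\zeta_\xi\rangle$ threading the branch. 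Finally it takes the supremum $\xi^*=\sup_{i<\varepsilon}\xi^{i}$ over $\varepsilon=\cof^{\mbfV}(\tau'_\gamma)$ many applications of the Claim, and the existence of this supremum inside $C^*$ is exactly where the regularity of $\delta$ (via $\varepsilon<\delta$) is used. Your proposal makes no mention of the pull-back to $\gamma$, the pigeonhole step, or this supremum, and the regressive function you describe (``least $\xi'$ for which the image fails to be contained as far down as it should be'') is not a well-posed function of $\xi$ without first fixing such a coherent sequence — so pressing down on it does not produce the needed $\xi^*$.

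\textbf{The ``key commutation'' is trivial and does not carry the content.} The identity $\pi^\mcT_{0,\xi'}(\alpha)=\pi^\mcU_{\xi,\xi'}(\pi^\mcT_{0,\xi}(\alpha))$ does hold for $\alpha<\xi$, since both sides equal $\pi^\mcT_{0,\xi}(\alpha)<\xi$. But the lemma is about arbitrary elements of $W\cut\tau_\xi$, which live well above $\xi$, and there the two sides genuinely differ (otherwise $F_{\xi,\xi'}$ would be trivial). The actual transfer in the paper goes through the $<_{E^W}$-least surjections $f:\xi\onto W\cut\zeta^i_\xi$ and $f':\xi'\onto W\cut\zeta^i_{\xi'}$: one uses clause (b) of the Claim plus elementarity to show $\pi^\mcU_{\xi,\xi'}\circ\pi^\mcT_{0,\xi}(f)=\pi^\mcT_{0,\xi'}(f')$, and then evaluates at $\pi^\mcT_{0,\xi}(\eta)<\xi$ (where the trivial commutation applies) to land in $\pi^\mcT_{0,\xi'}[W\cut\tau_{\xi'}]$. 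Your ``code $x$ by a pair $\langle\zeta,y\rangle$'' does not explain how the extra datum $y$ is moved coherently by the $\mcU$-side maps; that is precisely the work the coherent sequence $\langle\zeta^i_\xi\rangle$ and the canonical surjections are doing.

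\textbf{The cofinality argument is circular and has the quantifiers reversed.} You propose to prove cofinality by showing $\pi^\mcT_{0,\xi'}[W\cut\tau_{\xi'}]$ is the union over $\xi\in C^*\cap[\xi^*,\xi')$ of the left-hand images. That continuity statement is one of the bullet points the paper derives \emph{after} the lemma, using the lemma itself, so you cannot appeal to it here. Moreover, for the lemma's ``moreover'' clause one must fix a single pair $\xi<\xi'$ and show the left set is cofinal in the right; your argument instead lets $\xi$ vary with the target element $w$, which proves a different (and weaker) statement. The paper gets cofinality directly from the observation that $\langle\zeta^i_\xi\mid i<\varepsilon\rangle$ is cofinal in $\tau_\xi$ and the assignment $\zeta^i_\xi\mapsto\zeta^i_{\xi'}$ is cofinal in $\tau_{\xi'}$, with $\xi,\xi'$ fixed.
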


\begin{proof}
To prove the inclusion, it suffices to show that 
$\pi^\mcU_{\xi,\xi'}\circ\pi^\mcT_{0,\xi}[\tau_\xi]\subseteq
 \pi^\mcT_{0,\xi'}[\tau_{\xi'}]$ for all $\xi<\xi'$ on a tail-end of $C^*$. 
The first step toward the proof of this inclusion is the following claim. Let
$\gamma=\min(C^*)$. 

\begin{claim}
Let $\gamma<\beta<\tau'_{\gamma}$ (see Lemma~\ref{l4} for the definition of
$\tau'_\gamma$). There is a $\xi_\beta\in C^*$ and a sequence 
 $$ \langle \zeta_\xi\mid\xi\in C^*-\xi_\beta\rangle $$ 
such that the following holds:
\begin{itemize}
\item[(a)] $\xi<\zeta_\xi<\tau_\xi$ and 
                $\pi^\mcT_{0,\xi}(\zeta_\xi)\ge\pi^\mcU_{\gamma,\xi}(\beta)$
                whenever  $\xi\in C^*-\xi_\beta$, and 
\item[(b)] $\pi^\mcU_{\xi,\xi'}\circ\pi^\mcT_{0,\xi}(\zeta_\xi)=
                  \pi^\mcT_{0,\xi'}(\zeta_{\xi'})$ 
                whenever $\xi<\xi'$ are in $C^*-\xi_\beta$. 
\end{itemize}
\end{claim}

\begin{proof}
Pick a sequence $\langle\bar{\zeta}_\xi\mid\xi\in\lim(C^*)\rangle$ such that
$\xi<\bar{\zeta}_\xi<\tau_\xi$ and
$\pi^\mcT_{0,\delta}(\bar{\zeta}_\xi)\ge\pi^\mcU_{\gamma,\xi}(\beta)$ for all
$\xi\in\lim(C^*)$. Using Lemma~\ref{u.l.pullback-1} we 
obtain a regressive function $g$ defined on $\lim(C^*)$ such that    
$(\pi^\mcU_{\bar{\xi},\xi})^{-1}\circ\pi^\mcT_{0,\xi}(\bar{\zeta}_\xi)
  \in\rng(\pi^\mcT_{0,\bar{\xi}})$ 
whenever $\xi\in\lim(C^*)$ and $\bar{\xi}\in[g(\xi),\xi)\cap C^*$. 
Using Fodor's lemma, we find a $\xi_\beta\in C^*$ and a stationary set
$A'_\beta\subseteq\lim(C^*)$ such that $g(\xi)=\xi_\beta$ for all 
$\xi\in A'_\beta$. Then, using the pigeonhole principle along with the fact that
$\tau'_{\xi_\beta}<\delta$, we find an ordinal $\zeta'_\beta$ such that
$\xi_\beta<\zeta'_\beta<\tau'_{\xi_\beta}$, 
and a stationary set $A_\beta\subseteq A'_\beta$ such that 
$\pi^\mcU_{\xi_\beta,\xi}(\zeta'_\beta)=\pi^\mcT_{0,\xi}(\bar{\zeta}_\xi)$ for
all $\xi\in A_\beta$. For $\xi\in C^*-\xi_\beta$ then let 
\[
\zeta_\xi=(\pi^\mcT_{0,\xi})^{-1}\circ\pi^\mcU_{\xi_\beta,\xi}(\zeta'_\beta).
\]
Notice that $\zeta_\xi$ is defined for every $\xi\in C^*-\xi_\beta$: Just pick
any $\xi'\in A_\beta$ such that $\xi'\ge\xi$; then 
\[
\pi^\mcU_{\xi_\beta,\xi}(\zeta'_\beta)=
(\pi^\mcU_{\xi,\xi'})^{-1}\circ\pi^\mcT_{0,\xi'}(\bar{\zeta}_\beta)
\]
and the right side is in the range of $\pi^\mcT_{0,\xi}$ by
Lemma~\ref{u.l.pullback-1}. (Notice that for $\xi\in A_\beta$ we have
$\zeta_\xi=\bar{\zeta}_\xi$.) Clause (a) of the Claim then follows
easily from our choice of ordinals $\bar{\zeta}_\xi$. Now if $\xi<\xi'$
are in $C^*-\xi_\beta$, then  
\[
\pi^\mcU_{\xi,\xi'}\circ\pi^\mcT_{0,\xi}(\zeta_\xi)=
\pi^\mcU_{\xi,\xi'}\circ\pi^\mcU_{\xi_\beta,\xi}(\zeta'_\beta)=
\pi^\mcU_{\xi_\beta,\xi'}(\zeta'_\beta)=
\pi^\mcT_{0,\xi'}(\zeta_{\xi'}),
\]
which verifies (b) of the Claim. \end{proof}

We now complete the proof of Lemma~\ref{u.l.pullback-2}. 
Let $\varepsilon=\cof^{\mbfV}(\tau'_{\gamma})$. 
Pick a sequence $\langle\beta_i\mid i<\varepsilon\rangle$
that is increasing and cofinal in $\tau'_{\gamma}$. Appealing to the above
Claim, for each $i<\varepsilon$ there is $\xi^i=\xi_{\beta_i}$
and a sequence $\langle\zeta^i_\xi\mid\xi\in C^*-\xi^i\rangle$ such that (a)
and (b) of the Claim hold with $\xi^i,\zeta^i_\xi$ in place of
$\xi_\beta,\zeta_\xi$. Let $\xi^*\in C^*$ be an upper bound for the $\xi^i$;
such a $\xi^*$ exists as $\varepsilon<\delta$. Then (a) and (b) of
the above Claim hold with $\xi^*,\zeta^i_\xi$ in place of
$\xi_\beta,\zeta_\xi$ whenever $i<\varepsilon$. Also, since
each $\pi^\mcT_{0,\xi}$ maps $\tau_\xi$ cofinally into $\tau'_\xi$, and
$\pi^\mcU_{\gamma,\xi}$ maps $\tau'_{\gamma}$ cofinally into $\tau'_\xi$, each
sequence $\langle\zeta^i_\xi\mid i<\varepsilon\rangle$ is cofinal in
$\tau_\xi$. It follows that for any $\xi<\xi'$ in $C^*-\xi^*$, the assignment
$\zeta^i_\xi\mapsto\zeta^i_{\xi'}$ maps a cofinal subset of $\tau_\xi$
cofinally into $\tau_{\xi'}$. 

Fix $\xi<\xi'$ in $C^*-\xi^*$ and an $i<\varepsilon$. Let $f,f'\in W$ be such
that $f$ is the $<_{E^W}$-least surjection of $\xi$ onto
$M\cut\zeta^i_\xi$ and $f'$ is the $<_{E^W}$-least surjection of
$\xi'$ onto $W\cut\zeta^i_{\xi'}$; recall that $E^W$ is the extender
sequence of $W$. Then $f\in W\cut\tau_\xi$ and $f'\in W\cut\tau_{\xi'}$. 
Appealing to (b) of the Claim and to the elementarity of the maps
$\pi^\mcT_{0,\xi}$ and $\pi^\mcU_{\xi,\xi'}$ we conclude that 
$\pi^\mcU_{\xi,\xi'}\circ\pi^\mcT_{0,\xi}(f)=\pi^\mcT_{0,\xi'}(f')$. Given
$x\in W\cut\zeta^i_\xi$, let $\eta<\xi$ be such that $x=f(\eta)$. Then 
\begin{eqnarray*}
\pi^\mcU_{\xi,\xi'}\circ\pi^\mcT_{0,\xi}(x)&=&
\pi^\mcU_{\xi,\xi'}\circ\pi^\mcT_{0,\xi}(f(\eta))=
\pi^\mcU_{\xi,\xi'}\circ\pi^\mcT_{0,\xi}(f)
   \left(\pi^\mcU_{\xi,\xi'}\circ\pi^\mcT_{0,\xi}(\eta)\right)\\
&=&
\pi^\mcT_{0,\xi'}(f')(\pi^\mcT_{0,\xi}(\eta))=
\pi^\mcT_{0,\xi'}(f')\left((\pi^\mcT_{0,\xi'}(\eta)\right)=
\pi^\mcT_{0,\xi'}(f'(\xi))\\
&=&
\pi^\mcT_{0,\xi'}(x).
\end{eqnarray*}
Here the first two equalities on the middle line follow from the fact that
$\crp(\pi^\mcU_{\xi,\xi'})=\xi$ and $\crp(\pi^\mcT_{\xi,\xi'})\ge\xi$. 

Since the sequence $\langle\zeta^i_\xi\mid i<\gamma\rangle$ is cofinal in
$\tau_\xi$, the computation in the previous paragraph yields 
$\pi^\mcU_{\xi,\xi'}\circ\pi^\mcT_{0,\xi}[W\cut\tau_\xi]
 \subseteq\pi^\mcT_{0,\xi'}[W\cut\tau_{\xi'}]$. 
As the assignment $\zeta^i_\xi\mapsto\zeta^i_{\xi'}$ is cofinal in
$\tau_{\xi'}$ (as we saw above), it follows that the set on the
left is a cofinal subset of the set on the right. This completes the 
proof of Lemma~\ref{u.l.pullback-2}. 
\end{proof}     

Without loss of generality we will assume that the conclusions of
Lemma \ref{u.l.pullback-2} hold for any pair of ordinals $\xi<\xi'$ in
$C^*$. For any such pair of ordinals we let 
\[
\pi_{\xi,\xi'}=
(\pi^\mcT_{0,\xi'})^{-1}\circ\pi^\mcU_{\xi,\xi'}\circ\pi^\mcT_{0,\xi}
\rst (W\cut\tau_\xi).
\]
Then $\pi_{\xi,\xi'}:W\cut\tau_\xi\to W\cut\tau_{\xi'}$ is a fully elementary
map such that 
\begin{itemize}
\item $\crp(\pi_{\xi,\xi'})=\xi$ and $\pi_{\xi,\xi'}(\xi)=\xi'$, and 
\item $\pi_{\xi,\xi'}$ maps $W\cut\tau_\xi$ cofinally into
$W\cut\tau_{\xi'}$. 
\item The system 
      $\langle W\cut\tau_\xi,\pi_{\xi,\xi'}\mid\xi<\xi'\mbox{ in }C^*\rangle$
      is commutative and continuous, so if $\xi$ is a limit point of $C^*$,
      then $W\cut\tau_\xi$ is the direct limit of the system 
      $\langle W\cut\tau_{\bar{\xi}},\pi_{{\bar{\xi}},\xi'}\mid
       \bar{\xi}<\xi'\mbox{ in }C^*\cap\xi\rangle$. 
\end{itemize}

The above conclusions follow easily from the definition of $\pi_{\xi,\xi'}$
and the previous calculations. We give some details on the continuity: Given a
limit point $\xi$ of $C^*$, pick some $\bar{\xi}\in C^*\cap\xi$. If $x\in
W\cut\tau_\xi$, pick some $\zeta<\tau_{\bar{\xi}}$ such that $x\in
W\cut\pi_{\bar{\xi},\xi}(\zeta)$; this is possible by the cofinality of the
map $\pi_{\bar{\xi},\xi}$. If $\bar{f}\in W\cut\tau_{\bar{\xi}}$ is a
surjection of $\bar{\xi}$ onto $W\cut\zeta$, then
$f=\pi_{\bar{\xi},\xi}(\bar{f})$ is a surjection of $\xi$ onto
$W\cut\pi_{\bar{\xi},\xi}(\zeta)$, so $x=f(\alpha)$ for some
$\alpha<\xi$. Now pick $\xi'\in C^*\cap[\bar{\xi},\xi)$ such that
$\alpha<\xi'$. Since $f$ and $\alpha$ belong to $\rng(\pi_{\xi',\xi})$, it follows that $x\in\rng(\pi_{\xi',\xi})$ 
as well.

\begin{lemma}\label{u.l.coherence}
Let $\xi<\xi'$ be in $C^*$. Let $F_{\xi,\xi'}$ be the $(\xi,\xi')$-extender
derived from $\pi_{\xi,\xi'}$. Then $F_{\xi,\xi'}$ is a $W$-extender that
coheres to $W$.  
\end{lemma}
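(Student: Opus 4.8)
The plan is to show that $F_{\xi,\xi'}$, the $(\xi,\xi')$-extender derived from $\pi_{\xi,\xi'}\colon W\cut\tau_\xi\to W\cut\tau_{\xi'}$, is an extender on the sequence of $W$ by comparing the phalanx $\langle W,\ult(W,F_{\xi,\xi'}),\xi'\rangle$ against $W$ — this is the standard ``bicephalus'' or phalanx argument used to show that extenders derived from iteration maps are on the $\mbfK^c$-sequence, as in Jensen \cite{anfs} and Zeman \cite[\S 6.4]{imlc}. More precisely, first I would verify that $F_{\xi,\xi'}$ is genuinely an extender fragment in the sense required: since $\xi$ is inaccessible in $W$ (by Lemma~\ref{l4}(a) and the commutativity with $\pi^\mcT_{0,\xi}$) and $\pi_{\xi,\xi'}$ is fully elementary with critical point $\xi$ sending $\xi$ to $\xi'$ and mapping $W\cut\tau_\xi$ cofinally into $W\cut\tau_{\xi'}$, the derived $(\xi,\xi')$-extender $F_{\xi,\xi'}$ measures exactly the subsets of $\xi$ in $W\cut\tau_\xi = W\|\xi^{+W}$, i.e.\ all subsets of $\xi$ in $W$; it is $\xi'$-complete in the appropriate sense, and $\ult(W\cut\tau_\xi, F_{\xi,\xi'})$ is isomorphic (via the factor map) to a cofinal subset of $W\cut\tau_{\xi'}$, hence to $W\cut\tau_{\xi'}$ itself by continuity. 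This already gives coherence of the \emph{fragment} below $\xi'^{+}$ in the relevant models.

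The coherence claim ``$F_{\xi,\xi'}$ coheres to $W$'' means, in the terminology fixed after Lemma~\ref{cmt:abs}, that $E^W\rst\nu = E^{\ult(W,F_{\xi,\xi'})}\rst\nu$ where $\nu = \xi'^{+\,\ult(W,F_{\xi,\xi'})}$, provided $\ult(W,F_{\xi,\xi'})$ makes sense as a premouse and has the right value of $\xi'^{+}$; note that we are explicitly \emph{not} asserting wellfoundedness of $\ult(W,F_{\xi,\xi'})$ here (that is deferred to Part (B)), so the statement is really about the structures below the relevant cardinal successor. The key computation is this: $F_{\xi,\xi'}$ was built by composing $\pi^\mcT_{0,\xi}$, $\pi^\mcU_{\xi,\xi'}$, and $(\pi^\mcT_{0,\xi'})^{-1}$, so the ultrapower of $W$ by $F_{\xi,\xi'}$ agrees, up to the relevant level, with the pullback under $(\pi^\mcT_{0,\xi'})^{-1}$ of the action of $\pi^\mcU_{\xi,\xi'}\circ\pi^\mcT_{0,\xi}$ on $W$. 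The point is that $W_{\xi'} = M^\mcT_{\xi'}$ and $N_{\xi'} = M^\mcU_{\xi'}$ agree below $\delta$, hence in particular at and below $\xi'$ and below $\tau'_{\xi'} = \xi'^{+W_{\xi'}} = \xi'^{+N_{\xi'}}$ (Lemma~\ref{l4}(b)); and since the critical point of $\pi^\mcT_{\xi,\xi'}$ is $\ge\xi$ while the critical point of $\pi^\mcU_{\xi,\xi'}$ is exactly $\xi$, the $\mcT$-side map does not move subsets of $\xi$ but the $\mcU$-side does, and the comparison of $\mcT$ with $\mcU$ forces the two sides to agree on the extender sequence above $\xi'$ up to the next disagreement. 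I would extract from the termination/comparison analysis (Lemma~\ref{u.l.term} and the setup around $C^*$) that $E^{W'}\rst\nu = E^{N'}\rst\nu$ for $\nu$ a suitable cardinal successor of $\xi'$, and then pull this equality back along $\pi^\mcT_{0,\xi'}$ to get the coherence equation for $F_{\xi,\xi'}$ relative to $W$.

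The main obstacle I expect is bookkeeping the indexing carefully enough to see that $F_{\xi,\xi'}$ lands \emph{on} the $W$-sequence rather than merely being an extender over $W$ that coheres — i.e.\ identifying the correct index $\lambda = \xi'^{+\ult(W,F_{\xi,\xi'})}$ (in Mitchell–Steel indexing) and checking that the initial segment condition and the agreement $E^W\rst\lambda = E^{\ult(W,F_{\xi,\xi'})}\rst\lambda$ hold, using that $W$ is fully iterable and $A_0$-thick so that the relevant comparison has no truncation on the $W$-side of the main branch (Lemma~\ref{u.l.term}(a)) and $\pi^\mcT_{b^\mcT}(\delta)=\delta$. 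A secondary subtlety is that I must phrase everything so as not to presuppose wellfoundedness of $\ult(W,F_{\xi,\xi'})$: the cleanest route is to work with the factor embedding $k\colon\ult(W\cut\tau_\xi,F_{\xi,\xi'})\to W\cut\tau_{\xi'}$ (which is into, with $k\rst\xi'=\mathrm{id}$ and $k$ cofinal, hence $k$ is the identity up to $\tau_{\xi'}$ by continuity), so that the relevant initial segment of $\ult(W,F_{\xi,\xi'})$ is literally an initial segment of $W$, from which coherence is immediate. I would then remark that this is exactly the hypothesis needed to later invoke Lemma~\ref{cmt:abs} once wellfoundedness and phalanx iterability are established in Part (B).
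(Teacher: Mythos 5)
Your closing paragraph is exactly the paper's proof: the factor map $k\colon\ult(W\cut\tau_\xi,F_{\xi,\xi'})\to W\cut\tau_{\xi'}$, $[a,f]\mapsto\pi_{\xi,\xi'}(f)(a)$, satisfies $k\rst(\xi'+1)=\idm$ and is cofinal into $\tau_{\xi'}$, so (since the transitive collapse of the ultrapower has no cardinals above $\xi'$) $k=\idm$ and $\ult(W\cut\tau_\xi,F_{\xi,\xi'})$ is literally $W\cut\tau_{\xi'}$; coherence then follows by reading off the extender sequences. Your opening paragraph, however, is off target in a way worth flagging. First, the lemma does not claim that $F_{\xi,\xi'}$ is on the $W$-sequence --- ``coheres to $W$'' is the weaker agreement condition recorded after Lemma~\ref{cmt:abs}, and that is all that is (or can be) established at this stage. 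Second, the phalanx comparison you propose presupposes iterability of $\langle W,\ult(W,F_{\xi,\xi'}),\xi'\rangle$, which is precisely what Part~(B) of the argument has yet to establish; that is the hard step, requiring the frequent extension argument and the stationarity of $S_\delta$, and it is only after both that iterability and the present coherence lemma are in hand that Lemma~\ref{cmt:abs} is invoked. Appealing to the phalanx comparison here would be circular. Finally, the middle paragraph's detour --- extracting agreement of the extender sequences of $W'$ and $N'$ above $\xi'$ from the coiteration and pulling it back along $\pi^\mcT_{0,\xi'}$ --- is sound intuition, since it is essentially how $\pi_{\xi,\xi'}$ was built in Lemmas~\ref{u.l.pullback-1} and~\ref{u.l.pullback-2}, but it duplicates work already done and is superfluous once you have the factor map $k$: the factor-map argument is self-contained and is the one to write up.
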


\begin{proof}
We verify that $F_{\xi,\xi'}$ coheres to $W$; that $F_{\xi,\xi'}$ is a
$W$-extender is clear. As $W\cut\tau_\xi\models\zfc^-$, the ultrapower
$\ult(W\cut\tau_\xi,F_{\xi,\xi'})$ can be elementarily embedded into 
$W\cut\tau_{\xi'}$ in the usual way by assigning $\pi_{\xi,\xi'}(f)(a)$ to the
object in the ultrapower represented by $[a,f]$. This shows that the
ultrapower is well-founded. Let $Q$ be its transitive collapse, and let
$k:Q\to W\cut\tau_{\xi'}$ be the embedding defined as above. We then have
$\sigma\rst(\xi'+1)=\idm$. It follows that $k=\idm$, as otherwise $\crp(k)$
would be a cardinal in $Q$ larger than $\xi'$. Hence $Q$ is an initial segment
of $W\cut\tau_{\xi'}$. Since $\pi_{\xi,\xi'}$ maps
$\tau_\xi$ cofinally into $\tau_{\xi'}$ and $\pi_{\xi,\xi'}=k\circ i$, where
$i:Q\to W\cut\tau_{\xi'}$ is the ultrapower embedding, then $k$ maps $\On\cap Q$
cofinally into $\tau_{\xi'}$. It follows that $Q=W\cut\tau_{\xi'}$. \end{proof} 

Let 
\begin{equation} \label{u.e.dirlim-1}
\mbox{$\tilde{W}$ be the direct limit of the diagram $\langle
W\cut\tau_\xi,\pi_{\xi,\xi'}\mid\xi<\xi'\mbox{ in }C^*\rangle$}, 
\end{equation}
and
\begin{equation} \label{u.e.dirlim-2}
\mbox{$\pi_\xi:W\cut\tau_\xi\to\tilde{W}$ be the direct limit maps.}
\end{equation}

\begin{lemma}\label{u.l.elem}
Let $\theta$ be a regular cardinal larger than $\delta^+$, and let $X$ be an
elementary substructure of $H_\theta$ such that $\kappa=X\cap\delta\in\delta$
and  
\[
\langle W\cut\tau_\xi,\pi_{\xi,\xi'}\mid\xi<\xi'\mbox{ in }C^*\rangle\in X.
\] 
Let $H$ be the transitive collapse of $X$, and let $\sigma:H\to H_\theta$ be the
inverse of the Mostowski collapsing isomorphism. Then $\kappa\in C^*$,
$W\cut\tau_\kappa\in H$, $\sigma(W\cut\tau_\kappa)=\tilde{W}$, and 
$\sigma\rst(W\cut\tau_\kappa)=\pi_\kappa$. 
\end{lemma}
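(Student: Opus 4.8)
The plan is to show that $X$ being elementary in $H_\theta$ forces the collapse of the directed system to coincide with the genuine direct limit $\tilde W$, and that the point $\kappa = X\cap\delta$ lands inside $C^*$ because $C^*$ is a club definable (over $H_\theta$) from parameters available in $X$. First I would record that $C^*$, being a club subset of $\delta$ that is an element of $H_\theta$ and lies in $X$ (it is definable from the directed system and the coiteration data, all of which we may arrange to be in $X$, or equivalently we simply add $C^*$ to $X$), has the property that $\kappa = \sup(X\cap\delta) = X\cap\delta$ is a limit point of $C^*$; since $C^*$ is closed, $\kappa\in C^*$. Here one uses that $\kappa$ is an ordinal, i.e. $X\cap\delta\in\delta$, which is part of the hypothesis. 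This also gives that $W\cut\tau_\kappa$ makes sense, and I would note $\tau_\kappa = \kappa^{+W}$ is correctly computed because $\kappa\in C^*$ and by Lemma~\ref{l4}(a) (or simply because $\kappa$ is a cardinal of $W$, being inaccessible there).

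Next I would identify $W\cut\tau_\kappa$ inside $H$. Since the directed system $\langle W\cut\tau_\xi,\pi_{\xi,\xi'}\mid\xi<\xi'\text{ in }C^*\rangle$ is in $X$ and $\kappa = X\cap\delta$, every element of $C^*\cap\kappa$ is in $X$, and the restriction of the system to $C^*\cap\kappa$ is exactly the part of the system that $X$ ``sees below $\kappa$''. By continuity of the system (recorded in the bulleted list after Lemma~\ref{u.l.pullback-2}), $W\cut\tau_\kappa$ is the direct limit of $\langle W\cut\tau_{\bar\xi},\pi_{\bar\xi,\xi'}\mid\bar\xi<\xi'\text{ in }C^*\cap\kappa\rangle$. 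On the other hand, applying $\sigma^{-1}$ (the collapse) to the whole system, which is an element of $X$, the object $\sigma^{-1}(\tilde W)$ is by elementarity the direct limit inside $H$ of $\sigma^{-1}$ of the system; and since $\sigma$ is the identity below $\kappa$ and fixes each $\xi\in C^*\cap\kappa$ together with the maps $\pi_{\xi,\xi'}$ for $\xi,\xi'<\kappa$, that collapsed system is precisely $\langle W\cut\tau_{\bar\xi},\pi_{\bar\xi,\xi'}\mid\bar\xi<\xi'\text{ in }C^*\cap\kappa\rangle$ again. (One needs that $C^*\cap\kappa = \sigma^{-1}(C^*)$ and that $\sigma^{-1}$ of the tail of the system above $\kappa$ contributes nothing new — but $\kappa$ is the critical point of $\sigma$, so $\sigma^{-1}(C^*)= C^*\cap\kappa$ and $\kappa$ is the largest element, i.e.\ the system in $H$ is indexed by $C^*\cap\kappa$ with no top element, whose direct limit is the same $W\cut\tau_\kappa$.) Hence $\sigma^{-1}(\tilde W) = W\cut\tau_\kappa\in H$, i.e.\ $\sigma(W\cut\tau_\kappa)=\tilde W$.

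For the last assertion, that $\sigma\rst(W\cut\tau_\kappa)=\pi_\kappa$, I would compare the two maps on generators. Both are maps from $W\cut\tau_\kappa$ into $\tilde W$. Given $x\in W\cut\tau_\kappa$, pick (using continuity and cofinality in the system) some $\bar\xi\in C^*\cap\kappa$ and $\bar x\in W\cut\tau_{\bar\xi}$ with $\pi_{\bar\xi,\kappa}(\bar x)=x$. Then $\pi_\kappa(x) = \pi_{\bar\xi}(\bar x)$ by commutativity of the limit maps. On the other hand $\bar x, \bar\xi\in X$ (they lie below $\kappa=\mathrm{crp}(\sigma)$, and the whole structure $W\cut\tau_{\bar\xi}$ is in $X$ as $\bar\xi<\kappa$ and the system is in $X$), so $\sigma(\bar x)=\bar x$ and $\sigma(\pi_{\bar\xi})=\pi_{\bar\xi}$ (the map $\pi_{\bar\xi}:W\cut\tau_{\bar\xi}\to\tilde W$ is $\sigma^{-1}$ of the corresponding limit map inside $H_\theta$, wait — more carefully: $\pi_{\bar\xi}$ is an element of $H_\theta$, it lies in $X$ since it is definable from the system and $\bar\xi$, both in $X$, so $\sigma^{-1}(\pi_{\bar\xi})$ is the limit map in $H$, which by the previous paragraph is the map $W\cut\tau_{\bar\xi}\to W\cut\tau_\kappa$ of the $H$-system, namely $\pi_{\bar\xi,\kappa}$). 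Therefore $\sigma(\pi_{\bar\xi,\kappa}(\bar x)) = \pi_{\bar\xi}(\sigma(\bar x)) = \pi_{\bar\xi}(\bar x)$, i.e.\ $\sigma(x)=\pi_{\bar\xi}(\bar x)=\pi_\kappa(x)$, as desired.

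The main obstacle is bookkeeping around what exactly $X$ ``sees'': one must be careful that $C^*$ and the limit maps $\pi_\xi$, which are not literally mentioned in the displayed hypothesis as elements of $X$, are nonetheless in $X$ because they are first-order definable over $H_\theta$ from the directed system (together with $\delta$ and the coiteration trees $\mcT,\mcU$, which one may also assume are in $X$ by enlarging $X$ harmlessly), and that $\kappa$ being the critical point of $\sigma$ delivers $\sigma^{-1}(C^*)=C^*\cap\kappa$ with $\kappa$ as a (non-attained) supremum. Once that is set up, the identification $\sigma(W\cut\tau_\kappa)=\tilde W$ and $\sigma\rst(W\cut\tau_\kappa)=\pi_\kappa$ is just the standard fact that the transitive collapse of an elementary substructure, restricted to a directed-limit object sitting cofinally at the critical point, reproduces the direct limit and its canonical embedding. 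I do not expect any genuinely hard step here — the content is all in the earlier lemmas establishing continuity and cofinality of the system.
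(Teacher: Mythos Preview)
Your proposal is correct and follows essentially the same route as the paper: show $\kappa\in C^*$ via the club property, collapse the indexed system to the system indexed by $C^*\cap\kappa$ (where $\sigma$ acts as the identity since each $W\cut\tau_\xi$ has height below $\kappa$), invoke continuity of the system to identify the collapsed direct limit with $W\cut\tau_\kappa$, and then verify $\sigma\rst(W\cut\tau_\kappa)=\pi_\kappa$ by writing an arbitrary $x$ as $\pi_{\bar\xi,\kappa}(\bar x)$ and computing $\sigma(x)=\sigma(\pi_{\bar\xi,\kappa})(\sigma(\bar x))=\pi_{\bar\xi}(\bar x)=\pi_\kappa(x)$. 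Your worry about whether $C^*$, $\tilde W$, and the $\pi_\xi$ lie in $X$ is unnecessary: $C^*$ is literally the index set of the directed system that is assumed to be in $X$, and $\tilde W$ together with the maps $\pi_\xi$ are first-order definable from the system alone (as direct limit and limit maps), so there is no need to enlarge $X$ or appeal to the coiteration trees.
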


\begin{proof}
Since $C^*\in X$ and $C^*$ is club in $\delta$, the
critical point $\kappa$ of $\sigma$ is a limit point of $C^*$, hence an
element of $C^*$. Also $\sigma(C^*\cap\kappa)=C^*$. The inverse image of the
diagram 
$\langle W\cut\tau_\xi,\pi_{\xi,\xi'}\mid\xi<\xi'\mbox{ in }C^*\rangle$ under
$\sigma$ is thus of the form 
$\langle\bar{W}_\xi,\bar{\pi}_{\xi,\xi'}
 \mid\xi<\xi'\mbox{ in }C^*\cap\kappa\rangle$ where
 $\sigma(\bar{W}_\xi)=W\cut\tau_\xi$ for all $\xi\in C^*\cap\kappa$.  
Since the height of each $\bar{W}_\xi$ is strictly smaller than $\kappa$, the
map $\sigma$ is the identity on $\bar{W}_\xi$, so $\bar{W}_\xi=W\cut\tau_\xi$
and $\bar{\pi}_{\xi,\xi'}=\pi_{\xi,\xi'}$ for all $\xi<\xi'$ in
$C^*\cap\kappa$. Because the diagram 
$\langle W\cut\tau_\xi,\pi_{\xi,\xi'}\mid\xi<\xi'\mbox{ in }C^*\rangle$ is
continuous, the structure 
$W\cut\tau_\kappa=
 \lim\langle W\cut\tau_\xi,\pi_{\xi,\xi'}\mid
 \xi<\xi'\mbox{ in }C^*\cap\kappa\rangle$, being a direct limit of a diagram
 in $H$ is itself an element of $H$, and the same is also true of the direct
 limit  maps $\pi_{\xi,\kappa}$ where $\xi\in C^*\cap\kappa$. It follows that  
\begin{eqnarray*}
\sigma(W\cut\tau_\kappa)&=&
\sigma(\lim\langle W\cut\tau_\xi,\pi_{\xi,\xi'})
       \mid\xi<\xi'\mbox{ in }C^*\cap\kappa\rangle) \\ &=&
\lim\langle W\cut\tau_\xi,\pi_{\xi,\xi'}\mid\xi<\xi'\mbox{ in }C^*\rangle=
\tilde{W}.
\end{eqnarray*}

Furthermore, from the elementarity of $\sigma$, we obtain that
$\sigma(\pi_{\xi,\kappa})=\pi_\xi$ for all ordinals $\xi\in
C^*\cap\kappa$. Thus if $x\in W\cut\tau_\kappa$, then
$x=\pi_{\xi,\kappa}(\bar{x})$ for some $\xi\in C^*\cap\kappa$ and $\bar{x}\in
W\cut\tau_\xi$, hence  
\[
\sigma(x)=\sigma(\pi_{\xi,\kappa}(\bar{x}))=
\sigma(\pi_{\xi,\kappa})(\sigma(\bar{x}))=
\pi_{\xi,\delta}(\bar{x})=\pi_{\kappa,\delta}(\pi_{\xi,\kappa}(\bar{x}))=
\pi_{\kappa,\delta}(x),
\]
which shows that $\sigma\rst(W\cut\tau_\kappa)=\pi_\kappa$. \end{proof} 

It follows from the above lemma that the collection of all elementary
substructures $X$ of $H_\theta$ such that $\kappa=X\cap\delta\in\delta$ and
$\tilde{W}$ (from (\ref{u.e.dirlim-1})) collapses to $W\cut\kappa^{+W}$ under
the Mostowski collapsing isomorphism coming from $X$, is a club subset of
$\ptm_\delta(H_\theta)$. This is a strengthening of 
Mitchell-Schindler \cite[Lemma 3.5]{uemwlc}. 

We do not know if there is a significanly simpler proof of
Lemma~\ref{u.l.elem} based on the idea used in Mitchell-Schindler \cite[Lemma
3.5]{uemwlc}, as it seems that one needs to require $\tilde{W}\in X$ in order
to obtain the conclusions of Lemma~\ref{u.l.elem}. 

Given a regular cardinal $\theta>\delta^+$ and an elementary substructure $X$
of $H_\theta$ such that $X\cap\delta\in\delta$ and the diagram 
$\langle W\cut\tau_\xi,\pi_{\xi,\xi'}\mid\xi<\xi'\mbox{ in }C^*\rangle$ is an
element of $X$, define the following objects: 
\begin{itemize}
\item $H_X$ is the transitive collapse of $X$ and $\sigma_X:H_X\to X$ is the
      inverse of the Mostowski collapsing isomorphism. 
\item $\kappa_X=\crp(\sigma_X)$ and $\tau_X=\tau_{\kappa_X}=\kappa^{+W}_X$. 
\end{itemize}
If $Y$ is another such structure, $Y\supseteq X$ and $\kappa_Y>\kappa_X$ then:
\begin{itemize}
\item $\sigma_{X,Y}:H_X\to H_Y$ is defined by
      $\sigma_{X,Y}=\sigma^{-1}_Y\circ\sigma_X$. 
\item We write $\pi_X$ for
      $\pi_{\kappa_X}$ and $\pi_{X,Y}$ for $\pi_{\kappa_X,\kappa_Y}$.
\end{itemize}
Here of course all objects introduced above depend on $\theta$, but in our
applications we will keep $\theta$ fixed, so we treat it as a suppressed
parameter. It follows from the above lemma that
$\sigma_X\rst(W\cut\tau_X)=\pi_X$ and
$\sigma_{X,Y}\rst(W\cut\tau_X)=\pi_{X,Y}$, so these
restrictions depend only on the ordinals $\kappa_X$ and $\kappa_Y$ and not on
the structures $X$ and $Y$ themselves. In particular we get 
\begin{equation}\label{u.e.indep}
F_{X,Y}\dfeq F_{\kappa_X,\kappa_Y}=
\mbox{the $W$-extender at $(\kappa_X,\kappa_Y)$ derived from }\sigma_{X,Y},
\end{equation}
and
\begin{equation}
F_X \dfeq \mbox{the $W$-extender at $(\kappa,\delta)$ derived from }
         \pi_X=\sigma_X\rst(W\cut\tau_X).         
\end{equation}
We also note that if $\theta>\delta$ then $\ptm_\delta(H_\theta)$ is defined
consistently with our definition of $\ptm_\kappa(\lambda)$, that is, elements
$X$ of $\ptm_\delta(H_\theta)$ have the property $X\cap\delta\in\delta$. 

\begin{lemma}\label{u.l.s-theta}
Given a stationary set $S$ as in $(\ref{u.e.s})$, let $S^\theta$ be the
collection of all $X\in\ptm_\delta(H_\theta)$ satisfying the following
requirements:   
\begin{itemize}
\item[(a)] $X$ is an elementary substructure of $H_\theta$.
\item[(b)] The diagram 
           $\langle W\cut\tau_\xi,\pi_{\xi,\xi'}\mid
            \xi<\xi'\mbox{in }C^*\rangle$ 
           is an element of $X$. 
\item[(c)] $X\cap\delta^+\in S$.
\end{itemize}
Then $S^\theta$ is stationary, and for any $X,Y\in S^\theta$ such that
$X\subseteq Y$ and $\kappa_X<\kappa_Y$, the extender $F_{X,Y}$ is an element of
$\mbfM$. 
\end{lemma}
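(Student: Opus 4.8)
The argument splits into two parts, mirroring the two assertions of the lemma: first that $S^\theta$ is stationary in $\ptm_{\delta^+}(H_\theta)$ (or, more precisely, that it contains a set that meets every club), and second that for $X\subseteq Y$ in $S^\theta$ with $\kappa_X<\kappa_Y$ the extender $F_{X,Y}$ lies in $\mbfM$. For stationarity, I would fix an algebra $\mathfrak{A}$ on $H_\theta$ (equivalently a function $F\colon[H_\theta]^{<\omega}\to H_\theta$) and produce $X\in S^\theta$ closed under it. The point is that conditions (a) and (b) are trivial to arrange by taking any elementary $X\preceq H_\theta$ with the diagram $\langle W\cut\tau_\xi,\pi_{\xi,\xi'}\mid\xi<\xi'\text{ in }C^*\rangle$ as an element; condition (c), that $X\cap\delta^+\in S$, is where the stationarity of $S$ in $\ptm_{\delta^+}(\delta^+)\cap\mbfM$ enters. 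Here I would use a standard projection-of-clubs argument: the map $X\mapsto X\cap\delta^+$ pushes any club on $\ptm_{\delta^+}(H_\theta)$ down to a club on $\ptm_{\delta^+}(\delta^+)$, and since $S$ is stationary we may pick $x\in S$ in that club, then pull it back to an $X\preceq H_\theta$ closed under $\mathfrak{A}$, containing the diagram, and with $X\cap\delta^+=x$. (One must be slightly careful to also keep $X\cap\delta$ an ordinal, so that $\kappa_X$ is well defined; this is automatic if $x\cap\delta$ is an ordinal, which is arranged by intersecting with the relevant club, noting $\delta\in x$.)

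For the second assertion, the key observation is equation~(\ref{u.e.indep}): $F_{X,Y}=F_{\kappa_X,\kappa_Y}$ depends \emph{only} on the ordinals $\kappa_X<\kappa_Y$ in $C^*$, not on the structures $X,Y$, and it is the $W$-extender at $(\kappa_X,\kappa_Y)$ derived from $\pi_{\kappa_X,\kappa_Y}$. So it suffices to show that for any $\xi<\xi'$ in $C^*$, the extender $F_{\xi,\xi'}$ derived from $\pi_{\xi,\xi'}$ is an element of $\mbfM$. Now I would unravel the definition $\pi_{\xi,\xi'}=(\pi^\mcT_{0,\xi'})^{-1}\circ\pi^\mcU_{\xi,\xi'}\circ\pi^\mcT_{0,\xi}\rst(W\cut\tau_\xi)$. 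The extender $F_{\xi,\xi'}$ is coded by a subset of $\xi'$ of size $\xi'<\delta$, so to place it in $\mbfM$ it is enough to compute $F_{\xi,\xi'}$ from objects lying in $\mbfM$. The model $W$ is in $\mbfM$ (it witnesses $A_0$-soundness of $\mbfK^{\mbfM}\cut\delta'$ inside $\mbfM$), hence so are $W\cut\tau_\xi$ and $W\cut\tau_{\xi'}$; and by Lemma~\ref{u.l.coherence}, $F_{\xi,\xi'}$ coheres to $W$, with $\ult(W\cut\tau_\xi,F_{\xi,\xi'})\cong W\cut\tau_{\xi'}$. The extender $F_{\xi,\xi'}$ is therefore exactly the $(\xi,\xi')$-extender determined by the ``collapse map'' $W\cut\tau_\xi\to W\cut\tau_{\xi'}$ arising from the fact that $W\cut\tau_{\xi'}$ \emph{is} (the transitive collapse of) an ultrapower of $W\cut\tau_\xi$ by a short extender with the right critical point $\xi$ and the right image $\xi'$; but by uniqueness of such comparison/ultrapower maps this collapse embedding is definable inside $\mbfM$ from $W\cut\tau_\xi$ and $W\cut\tau_{\xi'}$ alone, so $F_{\xi,\xi'}\in\mbfM$.

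Phrased more carefully: inside $\mbfM$, form the coiteration of $W\cut\tau_\xi$ and $W\cut\tau_{\xi'}$ (both iterable in $\mbfM$ by Lemma~\ref{cmt:l1}); since these are initial segments of $W$ and $W$ does not lose coiterations, the comparison is trivial and exhibits $W\cut\tau_{\xi'}$ as an initial segment of an iterate of $W\cut\tau_\xi$. Combined with the cofinality of $\pi_{\xi,\xi'}$ (so no truncation, and the iterate is all of $W\cut\tau_{\xi'}$) and with $\crp=\xi$, $\xi\mapsto\xi'$, one gets a unique single-extender iteration map $W\cut\tau_\xi\to W\cut\tau_{\xi'}$, and by the Dodd--Jensen–style uniqueness of iteration embeddings this map equals $\pi_{\xi,\xi'}$. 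All of this takes place inside $\mbfM$, so the derived extender $F_{\xi,\xi'}$, hence $F_{X,Y}$, belongs to $\mbfM$. The main obstacle is the second part: one has to argue that $\pi_{\xi,\xi'}$ — which was constructed \emph{externally}, in $\mbfV$, out of the trees $\mcT$ and $\mcU$ living in $\mbfV$ — is nonetheless internally definable in $\mbfM$, and this rests precisely on the uniqueness of the relevant embeddings between the $\mbfM$-iterable structures $W\cut\tau_\xi$ and $W\cut\tau_{\xi'}$, together with the coherence established in Lemma~\ref{u.l.coherence}.
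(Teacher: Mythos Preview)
Your treatment of the stationarity of $S^\theta$ is fine and matches the paper's one-line argument.

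The second part, however, has a genuine gap. You reduce to showing $F_{\xi,\xi'}\in\mbfM$ for \emph{arbitrary} $\xi<\xi'$ in $C^*$, and then try to recover $\pi_{\xi,\xi'}$ inside $\mbfM$ purely from the pair $(W\cut\tau_\xi,W\cut\tau_{\xi'})$ via some uniqueness-of-embeddings principle. This does not work. First, the coiteration of $W\cut\tau_\xi$ against $W\cut\tau_{\xi'}$ is trivial: both are initial segments of $W$, so no extenders are applied and one simply sees $W\cut\tau_\xi\unlhd W\cut\tau_{\xi'}$; this produces no nontrivial embedding whatsoever. Second, the Dodd--Jensen lemma concerns iteration maps built from extenders \emph{on the sequence}; $\pi_{\xi,\xi'}$ is not such a map (indeed, the whole point of the later argument is that $F_{\xi,\xi'}$ is \emph{not} on the $W$-sequence, and deriving a contradiction from the hypothesis that it could be absorbed there). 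There is no general theorem asserting that an elementary cofinal embedding $W\cut\tau_\xi\to W\cut\tau_{\xi'}$ with critical point $\xi$ and $\xi\mapsto\xi'$ is unique, so you cannot conclude that $\pi_{\xi,\xi'}$ is definable in $\mbfM$ from these two initial segments alone.

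More tellingly, your argument never uses condition (c), which is precisely the hypothesis that does the work. The paper's proof proceeds as follows: since $X\cap\delta^+\in S\subseteq\mbfM$, the order-isomorphism between $\otp(X\cap\delta^+)$ and $X\cap\delta^+$ lies in $\mbfM$; by Lemma~\ref{u.l.elem} this isomorphism is $\sigma_X\rst\otp(X\cap\delta^+)=\pi_X\rst\otp(X\cap\delta^+)$, so in particular $\pi_X\rst\tau_X\in\mbfM$ (as $\tau_X\le\otp(X\cap\delta^+)$). The same holds for $Y$, whence $\pi_{X,Y}\rst\tau_X=(\pi_Y\rst\tau_Y)^{-1}\circ(\pi_X\rst\tau_X)\in\mbfM$. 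Finally, since $W\cut\tau_X,W\cut\tau_Y\in\mbfM$ and $\pi_{X,Y}$ is determined by its action on ordinals via the canonical well-ordering $<_W$, the full map $\pi_{X,Y}$ (and hence $F_{X,Y}$) is in $\mbfM$. The membership of $X\cap\delta^+$ in $\mbfM$ is essential here; without it there is no reason to expect $F_{\kappa_X,\kappa_Y}\in\mbfM$.
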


\begin{proof}
The stationarity of $S^\theta$ follows from the stationarity of $S$ and from
the fact that the collection of all $X$ satisfying clauses (a) and (b) is 
a club in $\ptm_\delta(H_\theta)$. We now verify that
$F_{X,Y}\in\mbfM$. For this, it will suffice to see that
$\pi_{X,Y}\in\mbfM$. We first observe that
$\pi_{X,Y}\rst\tau_X\in\mbfM$; for this we use the fact that 
$\pi_{X,Y}\rst\tau_X=(\pi^{-1}_Y\rst\tau_Y)\circ(\pi_X\rst\tau_X)$. To see
that both restriction maps on the right of this equality are in $\mbfM$, let
us argue for instance for $X$. Since $X\in S^\theta$ and $S\subseteq\mbfM$,
the restriction $\pi_X\rst\otp(X\cap\delta^+)=\sigma_X\rst\otp(X\cap\delta^+)$,
being the (unique) isomorphism between $\otp(X\cap\delta^+)$ and $X\cap\delta^+$, is an
element of $\mbfM$. The equality here comes from Lemma~\ref{u.l.elem}. As
$\tau_X\le\otp(X\cap\delta^+)$, the restriction $\pi_X\rst\tau_X$ 
is an element of $\mbfM$ as well. 

Now, since $W\cut\tau_X$ and $W\cut\tau_Y$ are both in $\mbfM$, it
suffices to argue that 
$\pi_{X,Y}$ is fully determined by $\pi_{X,Y}\rst\tau_X$. Since
$W\cut\tau_X\models\zfc^-$, the canonical well-ordering $<_W$ orders
$W\cut\tau_X$ in order-type $\tau_X$. So if $a\in W\cut\tau_X$, then there is
an ordinal $\alpha<\tau_X$ such that $a$ is the $\alpha$-th element of
$W\cut\tau_X$ under $<_W$. As $\pi_{X,Y}$ is elementary, $\pi_{X,Y}(a)$ is
the $\pi_{X,Y}(\alpha)$-th element of $W\cut\tau_Y$ under $<_W$. So the value
of $\pi_{X,Y}(a)$ is fully determined by the action of $\pi_{X,Y}$ on the
elements of $\tau_X$. 
\end{proof} 

\subsection{Proof of Theorem \ref{t0.1}(a)}

So far we completed task (A) in the strategy outlined at the beginning of this
section. We now proceed toward completing task (B). Our argument here
depends on the anti-large cardinal hypothesis we make in $\mbfV$. We start
with the case where no proper class inner model with a Woodin cardinal in the
sense of $\mbfV$ is allowed, that is, we are about to complete the proof of
Theorem~\ref{t0.1}(a). 

Here we apply a frequent extension argument, 
similar to that in Schim\-mer\-ling-Steel~\cite{mcm}; see also
Mitchell-Schimmerling~\cite{cwcc}. A general version of the frequent
extension argument in the simplified context of extender models below $0^\P$
is described in Zeman \cite[\S\S 7.5, 8.3]{imlc}, and also Cox
\cite{ctsr}. The layout of the argument in 
Cox \cite{ctsr} uses a system of extender models indexed by structures from a
suitable stationary set, rather then a linear chain of internally aproachable
structures. In our situation, it will be convenient to use this layout. The
initial step of the frequent extension argument thus produces 
a system of structures and embeddings indexed with elements $X$ of the
collection $S^\theta_\delta$ for a suitable $\theta$ (see below for the
definition of $S^\theta_\delta$); we denote these objects
by $\bar{N}_X,N^*_X,\rho_X,\tilde{\sigma}_X,\sigma'_X,\dots$ We
construct these objects in such a way that for each index $X$, each individual
$\bar{N}_X,N^*_X,\dots$ is an element of $\mbfM$. The entire indexed system, of
course, need not be an element of $\mbfM$. It may be the case (we
do not know) that one can proceed more liberally and run some constructions
in $\mbfV$ 
instead of $\mbfM$, but in the end we need to guarantee that certain premice
and phalanxes we construct are in $\mbfM$, and are iterable in the sense of
$\mbfM$. Our approach guarantees this automatically. Also, it is worth noting
that in order to run the frequent extension argument, we merely need the
anti-large cardinal hypothesis that no proper class inner model with a Woodin
cardinal exists in the sense of $\mbfM$. (So at this point in the proof of
Theorem~\ref{t0.1}(a) we do not need the full anti-large cardinal assumption
of the theorem.) 

Recall the set $S_\delta$ from (\ref{i.e.sd}). Working in $\mbfV$, pick a
regular $\theta$ such that $H_{\delta^+}\in H_\theta$. Following the notation 
introduced in Lemma~\ref{u.l.s-theta}, set
$S^\theta_\delta=(S_\delta)^\theta$. We show that for all
but nonstationarily many $X\in S^\theta_\delta$ and all $Y\in S^\theta_\delta$
such that $X\in Y$, the ultrapower $\ult(W,F_{X,Y})$ is well-founded and the
phalanx $(W,\ult(W,F_{X,Y}),\kappa_Y)$ is normally iterable in
the sense of $\mbfM$. (Under our anti-large cardinal hypothesis, this is
actually equivalent to its iterability in the sense of $\mbfV$). By
Lemma~\ref{cmt:abs} applied inside $\mbfM$, the 
extender $F_{X,Y}$ is an element of $W$. However, by
Lemma~\ref{u.l.coherence}, the extender induces the ultrapower map
$\pi_{X,Y}$ that maps $\tau_X$ cofinally into $\tau_Y$,
which is false in $W$. This is a contradiction, and
completes the proof of Theorem~\ref{t0.1}(a).

\begin{remark} 
It is the use of the frequent extension argument that necessitates  
the assumption of stationarity of $S_\delta$ in place of the weaker assumption
that $\ptm_\delta(\delta^+)\cap\mbfM$ is stationary. In general, the
assumption that $\ptm_\delta(\delta^+)\cap\mbfM-S_\delta$ is stationary is not
sufficient to run the frequent extension argument; see for
instance R\"asch-Schindler \cite{ncp}.
\end{remark}

Before we proceed, recall that given phalanxes $(P,P',\alpha)$ and
$(Q,Q',\beta)$, a pair of maps $(\sigma,\sigma')$ is an embedding of
$(P,P',\alpha)$ into $(Q,Q',\beta)$ if and only if $\sigma:P\to P'$ and
$\sigma':Q\to Q'$ are $\Sigma_0$-elementary and cardinal preserving,
$\sigma\rst\alpha=\sigma'\rst\alpha$, $\sigma[\alpha]\subseteq\beta$, and
$\sigma'(\alpha)\ge\beta$. We will only deal with embeddings that are fully
elementary, but of course one may consider embeddings of various degrees of
preservation. 

If $\mcT$ is a normal iteration tree on $(P,P',\alpha)$ according to the 
uniqueness strategy, $\alpha$ is a cardinal in $P'$ and $(Q,Q',\beta)$ is
iterable via some iteration strategy $\Sigma^Q$, we can copy $\mcT$
onto a normal iteration tree on $(Q,Q',\beta)$ according to $\Sigma^Q$ via the
embedding $(\sigma,\sigma')$. Note that we will 
use this terminology also in the case where $Q=Q'$, that is, when we talk
about an embedding of a phalanx into a premouse. 

For each $X\in S^\theta_\delta$, pick some $Y=Y(X)\in S^\theta_\delta$ such
that $X\in Y$. Then let $G_X=F_{X,Y(X)}$ and
$\lambda_X=\kappa_{Y(X)}$. We will prove that for all but nonstationarily many
$X\in S^\theta_\delta$,  
\begin{equation}\label{e.u.iterable}
\mbox{$\ult(W,G_X)$ is well-founded and 
$(W,\ult(W,G_X),\lambda_X)$ is normally iterable.}
\end{equation} 
Here we talk about iterability in the sense of $\mbfM$.

Heading for a contradiction, assume there is a stationary 
\begin{equation}\label{u.e.bad-s}
S\subseteq S^\theta_\delta
\end{equation}
such that either $\ult(W,G_X)$ is ill-founded or else this ultrapower is
well-founded but the phalanx $(W,\ult(W,G_X),\lambda_X)$ 
is not normally iterable in the sense of $\mbfM$. So there is some 
$\theta_0>\delta^{+\mbfV}$ such that $\theta_0$ is a successor cardinal in
$W$ and the same is true with $W\cut\theta_0$ in place of $W$ for all $X\in
S$. Write $N$ for $W\cut\theta_0$ and $N'_X$ for $\ult(N,G_X)$, where we have identified 
well-founded parts with their transitive collapses. Thus even if $N'_X$ is 
ill-founded, it is well-founded past $\lambda_X$, and if $N'_X$ is well-founded,
then it is actually transitive. For each $X\in S$ let $\mcT_X\in\mbfM$ be a
putative normal iteration tree on the phalanx $(N,N'_X,\lambda_X)$ witnessing
that the phalanx is not normally iterable in $\mbfM$. Here we mean that either
$\mcT_X$ has a last ill-founded model, or $\mcT_X$ is of limit length and has
no cofinal well-founded branch. If $N'_X$ is ill-founded, we let
$\mcT_X$ be the phalanx $(N,N'_X,\lambda_X)$ whose last model is $N'_X$, which
offers uniform treatment of the situations listed above. 

Pick a $\mbfV$-regular $\theta^*>\theta$ such that $\theta,S,\mcT^X\in
H_{\theta^*}$ whenever $X\in S$. For each $X$, construct an elementary
substructure $Z_X\prec H_{\theta^*}^\mbfM$ such that 
\begin{itemize}
\item $Z_X\in\mbfM$ and is countable in $\mbfM$, and  
\item $G_X,\mcT_X,\tilde{\tau}\in Z_X$, where $\tilde{\tau}=\delta^{+\tilde{W}}$.
\end{itemize} 

Let $H^Z_X$ be the transitive collapse of $Z_X$, let 
$\rho_X:H^Z_X\to H^\mbfM_{\theta^*}$ 
be the inverse to the associated Mostowski collapsing isomorphism, and let 
 $$ (\bar{\mcT}_X,\bar{N}_X,\bar{N}'_X,\bar{\lambda}_X)=\rho_X^{-1}(\mcT_X,N,N'_X,\lambda_X). $$
So each of these objects individually is in $\mbfM$, although the collection consisting
of all of them may not be an element of $\mbfM$. 

Next we do a pressing down argument which is a typical part of any frequent
extension argument. Consider 
$X\in S$ such that $H^\mbfM_{\delta^+}\in X$. Recall that since
$X\cap\delta^+\in\mbfM$, the restriction 
$\sigma_X\rst\tau_X$ is an element of $\mbfM$ as well. Since $\delta$ is a
cardinal and $\sigma_X\rst\tau_X$ maps $\tau_X<\delta$ cofinally into
$\tilde{\tau}$, necessarily $\tilde{\tau}<\delta^{+\mbfM}$, so there is a
surjection $f:\delta\to\tilde{\tau}$ such that $f\in\mbfM$. As 
$\tilde{\tau},H^\mbfM_{\delta^+}\in X$, there is a surjection
$f:\delta\to\tilde{\tau}$ such that $f\in\mbfM\cap X$. Since 
$\sigma_X[Z_X\cap\tau_X]\subseteq{\mbfM}\cap X$, it follows that 
$\sigma_X[Z_X\cap\tau_X]\subseteq f[\kappa_X]$.

Since $\sigma_X\rst\tau_X$ is an element of $\mbfM$, so is $\sigma_X[Z_X\cap\tau_X]$.
In $\mbfM$, the set $\sigma_X[Z_X\cap\tau_X]$ is a countable subset of
$\tilde{\tau}$. Since $\cof^\mbfM(\kappa_X)$ is uncountable in $\mbfM$, we can
find an ordinal $\alpha<\kappa_X$ such that  
$\sigma_X[Z_X\cap\tau_X]\subseteq f[\alpha]$. But since $f\in X$, also 
$f[\alpha]$ is an element of $X$. Thus, letting $a=f[\alpha]$, the set $X$
witnesses the existential quantifier in the statement 
\begin{equation}\label{u.e.down}
H_{\theta^*}\models(\exists v\in S)(a\in v).
\end{equation}
Obviously the set 
\[
S'=\{X\in S\mid\mbox{$H^\mbfM_{\delta^+}\in X$ and 
$(\exists X'\prec H_{\theta^*})(S\in X'\;\&\;X'\cap H_\theta=X)$}\} 
\] 
is stationary. Given $X\in S'$, let $X'\prec H_{\theta^*}$ witness
this. Since $a,S\in X'$, applying the elementarity of $X'$ to  
(\ref{u.e.down}) yields
\[
X'\models(\exists v\in S)(a\in v).
\] 
So there is some $\bar{X}\in S\cap X'$ with $a\in\bar{X}$, and such $\bar{X}$
is obviously an element of $X$. Since 
$\sigma_X[Z_X\cap\tau_X]\subseteq a\subseteq\bar{X}$ (the latter inclusion
follows from the facts that $a\in\bar{X}$, the cardinality of $a$ is less than 
$\delta$, and $\bar{X}\cap\delta$ is transitive), we
can define a regressive function $g:S'\to S$ such that
$\sigma_X[Z_X\cap\tau_X]\subseteq g(X)$ for all $X\in S'$. By pressing down,
there are a stationary $S^*\subseteq S'$ and an $X^*\in S$ such that 
\begin{equation}\label{u.e.fix}
\sigma_X[Z_X\cap\tau_X]\subseteq X^*\in X\mbox{ for all }X\in S^*. 
\end{equation}

Fix the following notation: 
\begin{itemize}
\item $H^*$ is the transitive collapse of $X^*$. 
\item $\sigma^*_X:H^Z_X\to H^*$ is a partial map defined by
      $\sigma^*_X=\sigma^{-1}_{X^*,X}\circ\rho_X$. 
\item $\tau^*_X=\sup(\sigma^*_X[\bar{\tau}_X])$, where 
      $\bar{\tau}_X=\rho^{-1}_X(\tau_X)$.
\end{itemize}

It follows from (\ref{u.e.fix}) that
$\rho_X[\bar{\tau}_X]=Z_X\cap\tau_X\subseteq\sigma_{X^*,X}[\tau_{X^*}]$, so
$\sigma^*_X(\xi)$ is defined for all $\xi<\bar{\tau}_X$. Furthermore, since
$\rho_X,\sigma_{X^*,X}\rst\tau_{X^*}$ are both elements of $\mbfM$, so is
$\sigma^*_X\rst\bar{\tau}_X$. An argument similar to that in the proof of
Lemma~\ref{u.l.s-theta} shows that $\sigma^*_X\rst(\bar{N}_X\cut\bar{\tau}_X)$
is fully determined by $\sigma^*_X\rst\bar{\tau}_X$, and is defined on the
entire $\bar{N}_X\cut\bar{\tau}_X$.
Hence  $\sigma^*_X\rst(\bar{N}_X\cut\bar{\tau}_X):
 \bar{N}_X\cut\bar{\tau}_X\to W\res\tau^*_X=N\res\tau^*_X$ is a
$\Sigma_0$-preserving embedding mapping $\bar{N}_X\cut\bar{\tau}_X$ cofinally
into $N\res\tau^*_X$. (Recall that $N\res\tau^*_X$ is the same as
$N\cut\tau^*_X$ without the top extender; and since $\bar{\tau}_X$, being a
cardinal in $\bar{N}_X$, does not index an extender in $\bar{N}_X$, under a
slight abuse of notation we have
$\bar{N}_X\cut\bar{\tau}_X=\bar{N}_X\res\bar{\tau}_X$.) With a tiny bit of
effort one can see that $\sigma^*_X\rst(\bar{N}_X\cut\bar{\tau}_X)$ is fully
elementary (note that $W\res\tau^*_X$ is an elementary substructure of
$W\res\tau_{X^*}$). We thus have the following conclusion: 
\begin{equation}\label{u.e.sigma-star}
\sigma^*_X\in\mbfM\mbox{ and }
\sigma_{X^*,X}\circ\sigma^*_X\rst(\bar{N}_X\cut\bar{\tau}_X)=
\rho_X\rst(\bar{N}_X\cut\bar{\tau}_X).
\end{equation}

This agreement of
$\sigma_{X^*,X}\circ\sigma^*_X$ with $\rho_X$ on $\bar{N}_X\cut\bar{\tau}_X$
makes it possible to run the argument in the proof of the Interpolation Lemma
(Zeman \cite[Lemma 3.6.10]{imlc}), and obtain an acceptable structure
$N^*_X$ extending $N\res\tau^*_X$ and embeddings 
$\tilde{\sigma}_X:\bar{N}_X\to N^*_X$ and $\sigma'_X:N^*_X\to N$ with
the following properties: 
\begin{itemize}
\item $N^*_X$ is an end-extension of $N\res\tau^*_X$. 
\item $\tau^*_X=\kappa_{X^*}^{+N^*_X}$ hence 
      $\ptm(\kappa_{X^*})\cap N^*_X= \ptm(\kappa_{X^*})\cap
      N\res\tau^*_X\subseteq\ptm(\kappa_{X^*})\cap W$. 
\item $\tilde{\sigma}_X$ is an extension of $\sigma^*_X\rst\bar{N}_X$, and 
      $\sigma'_X$ is an extension of $\sigma_{X^*,X}\rst(N\res\tau^*_X)$. 
\item Both maps $\tilde{\sigma}_X$ and $\sigma'_X$ are 
      $\Sigma_0$-preserving, and cardinal preserving.
\item $\sigma'_X\circ\tilde{\sigma}_X=\rho_X\rst\bar{N}_X$. 
\item $N^*_X,\sigma'_X$ and $\tilde{\sigma}_X$ are elements of $\mbfM$. 
\end{itemize}

Briefly, $N^*_X$ is constructed as the ultrapower (Zeman \cite{imlc} uses
the term ``pseudoultrapower") of $\bar{N}_X$ by $\sigma^*_X$, the map
$\tilde{\sigma}_X$ is the ultrapower embedding, and $\sigma'_X$ is the
factor map between $\tilde{\sigma}_X$ and $\rho_X\rst\bar{N}_X$. The last
item follows from the fact that the entire ultrapower construction takes place
in $\mbfM$. Since $N$ is an initial segment of an 
extender model below its successor cardinal, $N$ is a passive premouse and
$N\models\zfc^-$. It follows that both maps are in fact fully elementary, and
$N^*_X$ is a passive premouse. Also, we can view $\sigma'_X$ as a map from
$N^*_X$ into $W$, and in this case $\sigma'_X$ is $\Sigma_0$-preserving and
cardinal preserving.    

The above construction gives us the following for all $X,Y\in S^*$ such that
$Y=Y(X)$ or $Y(X)\in Y$; note that it makes sense to form
$\ult(N^*_X,F_{X^*,Y})$, as follows from the second bullet point in the 
paragraph below (\ref{u.e.sigma-star}):
\begin{equation}\label{u.e.reflection} 
\begin{array}{l}
\mbox{Either $\ult(N^*_X,F_{X^*,Y})$ is ill-founded,}\\ 
\mbox{or else $(W,\ult(N^*_X,F_{X^*,Y}),\kappa_Y)$ is not iterable.}
\end{array}
\end{equation}

The conclusion for the case where $Y(X)\in Y$ follows immediately from the
conclusion for $Y=Y(X)$ since the pair $(\idm,k)$ is an embedding of the phalanx
$(W,\ult(W,F_{X^*,Y(X)}),\lambda_X)$ into $(W,\ult(W,F_{X^*,Y}),\kappa_Y)$
where $k$ is the factor map between the two ultrapower embeddings. To see the
conclusion for $Y=Y(X)$, recall that we put $\mcT_X$, an iteration tree
witnessing the non-iterability of the phalanx
$(N,\ult(N,F_{X,Y}),\lambda_X)$, into $Z_X$. Then an argument similar to the 
proof of Steel \cite[Lemma 2.4(b)]{cmip} yields that $\bar{\mcT}_X$ witnesses
that $(\bar{N}_X,\bar{N}'_X,\bar{\lambda}_X)$ is not iterable in the sense of
$\mbfM$; here recall that $\bar{N}'_X=\ult(\bar{N}_X,\bar{G}_X)$ where
$\bar{G}_X=\rho_X^{-1}(G_X)$. (We stress that we are assuming $Y=Y(X)$.) Here
is the place where we use the fact that $\theta_0$ is a successor cardinal in
$W$; see Steel \cite[Lemma 6.13]{cmip} for details concerning why this
assumption is useful. 

Let $k':\bar{N}'_X\to\ult(N^*_X,F_{X^*,Y})$ be the map defined by 
\[
k':[a,f]_{\bar{G}_X}\mapsto[\rho_X(a),\tilde{\sigma}_X(f)]_{F_{X^*,Y}} 
\]
whenever $a\in[\bar{\lambda}_X]^{<\omega}$ and $f\in\bar{N}_X$ is such that
$\dom(f)=[\bar{\kappa}_X]^{|a|}$, where $\bar{\kappa}_X=\crp(\bar{G}_X)$.  
Then the pair $(\rho_X\rst\bar{N}_X,k')$ is an embedding of the phalanx
$(\bar{N}_X,\bar{N}'_X,\bar{\lambda}_X)$ into
$(W,\ult(N^*_X,F_{X^*,Y}),\lambda_X)$ witnessing (\ref{u.e.reflection}) in the
case where $Y=Y(X)$. This follows by the standard 
computation: First notice that it follows from the definition of $k'$, for any
$a\in[\bar{\lambda}_X]^{<\omega}$ we have
$k'(a)=k'([a,\idm]_{\bar{G}_X})=[\rho_X(a),\idm]_{F_{X^*,Y}}=\rho_X(a)$ and
$k'(\bar{\lambda}_X)=\lambda_X$, so we have the necessary agreement between
the two maps. To see that $k'$ is sufficiently elementary, given a formula
$\varphi(v)$, \L o\'s theorem yields $\bar{N}'_X\models\varphi([a,f])$ iff 
\[
x=\{u\in[\bar{\kappa}_X]^{|a|}\mid\bar{N}_X\models\varphi(f(u))\}
  \in(\bar{G}_X)_a.
\]
By applying $\rho_X$, we have $\rho_X(x)\in (G_X)_{\rho_X(a)}$ or,
equivalently, $\rho_X(a)\in\sigma_{X,Y}(\rho_X(x))$. Since
$\sigma_{X,Y}(\rho_X(x))=\sigma_{X,Y}\circ\sigma'_X\circ\sigma^*_X(x)=
 \sigma_{X^*,Y}(\sigma^*_X(x))$, 
we conclude that $\sigma^*_X(x)\in(F_{X^*,Y})_{\rho(a)}$. Now
$$ \sigma^*_X(x)=\{u\in[\kappa_{X^*}]^{|a|}\mid
 N^*_X\models\varphi(\tilde{\sigma}_X(f)(u))\} $$ 
and, by another application of \L o\'s theorem, we have
 $$ \ult(N^*_X,F_{X^*,Y})\models\varphi([\rho_X(a),\tilde{\sigma}_X(f)]. $$
This completes the proof of (\ref{u.e.reflection}). 

From now on, the argument closely follows Mitchell-Schimmerling
\cite[\S 3]{cwcc}. Write $\kappa^*$ for $\kappa_{X^*}$. 
We introduce a relation $\stl$ on premice $Q$ such that $(W,Q,\kappa^*)$ is an
iterable phalanx:
\begin{equation}\label{u.e.stl}
\parbox{4in}
{
$Q'\stl Q$ iff there is a normal iteration tree $\mcT$ on
$(W,Q,\kappa^*)$ such that $Q'$ is an initial segment of the last model
$M^\mcT_\infty$  of $\mcT$, and, letting $b$ be the main branch of $\mcT$, one
of the following holds: 
\begin{itemize}
\item $W$ is on $b$. 
\item $Q$ is on $b$ and there is a truncation point on $b$.
\item $Q$ is on $b$, there is no truncation point on $b$, and $Q'$ is a proper
      initial segment of $M^\mcT_\infty$. 
\end{itemize}
}
\end{equation}

The following lemma is a simple instance of 
Mitchell-Schimmerling \cite[Lemma 3.2]{cwcc}, rephrased for our purposes. 

\begin{lemma}\label{u.l.minimal}
Let $\mathfrak{Q}$ be the transitive closure of the singleton $\{W\}$ under the
relation $\stl$. Then $\stl$ restricted to $\mathfrak{Q}$ is well-founded. 
\end{lemma}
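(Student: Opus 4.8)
The plan is to argue by contradiction, exactly as in the standard core model universality arguments (Mitchell--Schimmerling \cite{cwcc}), adapted to the present local setting. Suppose $\stl$ restricted to $\mathfrak{Q}$ is ill-founded, and fix an infinite descending chain $W = Q_0 \stl Q_1 \stl Q_2 \stl \cdots$, with witnessing normal iteration trees $\mcT_n$ on the phalanx $(W, Q_n, \kappa^*)$ so that $Q_{n+1}$ is an initial segment of the last model of $\mcT_n$ and the case distinction in \eqref{u.e.stl} holds along the main branch $b_n$ of $\mcT_n$. The idea is to compose the trees $\mcT_n$ into a single ``master'' iteration tree $\mcT$ on the original phalanx $(W, Q, \kappa^*)$, where $Q = Q_0 = W$ (using that $\mathfrak{Q}$ is generated from $\{W\}$), and to extract from the descending chain a cofinal branch of $\mcT$ along which infinitely many drops or infinitely many decreasing ordinals occur, contradicting well-foundedness of the branch model (or of the tree-order/drop structure).

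The key steps, in order, are as follows. First I would set up the composition: since each $Q_{n+1}$ sits as an initial segment of the last model of $\mcT_n$, and $(W, Q_{n+1}, \kappa^*)$ is again an iterable phalanx (the phalanxes appearing are all iterable by hypothesis, so the relation $\stl$ is well-defined on them), one can stack $\mcT_{n+1}$ on top of $\mcT_n$ in the usual way, re-indexing extenders, to form an iteration tree $\mcT$ on $(W, Q, \kappa^*)$ of limit length whose associated tree order threads the main branches $b_n$. Second, I would analyze the three cases in \eqref{u.e.stl} at each stage $n$: if $W$ lies on $b_n$ or there is a truncation on $b_n$, then passing from stage $n$ to stage $n+1$ contributes a genuine drop (in model or in degree) to the composed branch; if instead $Q_n$ lies on $b_n$ with no truncation but $Q_{n+1}$ is a \emph{proper} initial segment of the last model, then the iteration embedding $Q_n \to M^{\mcT_n}_\infty$ together with the inclusion $Q_{n+1} \lhd M^{\mcT_n}_\infty$ strictly decreases ordinal rank when followed through the composition. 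Third, I would observe that the composed cofinal branch $b$ of $\mcT$ therefore either drops infinitely often or gives rise to an infinite strictly descending sequence of ordinals in its direct limit model, so $M^\mcT_b$ is ill-founded (or the branch is not well-defined as a tree branch) --- but $(W, Q, \kappa^*)$ is an iterable phalanx via the uniqueness strategy, so every cofinal branch that the strategy could pick is well-founded and has only finitely many drops. This is the desired contradiction.

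The main obstacle I expect is the bookkeeping in the composition step: one must be careful that stacking $\mcT_{n+1}$ on a tree whose last model only \emph{contains} $Q_{n+1}$ as an initial segment is legitimate as a normal iteration tree (the extenders used in $\mcT_{n+1}$ have critical points above $\kappa^*$ and are applied to initial segments in the correct way), and that the resulting object is genuinely a \emph{normal} tree on $(W, Q, \kappa^*)$ so that iterability applies. A secondary subtlety is ensuring that the ``no truncation, proper initial segment'' case really does force a rank drop when compiled into the single branch --- here one uses that the iteration maps along $b_n$ are $\lhd$-preserving and that $Q_{n+1} \lhd M^{\mcT_n}_\infty$ is proper, so $\mathsf{rank}(Q_{n+1}) < \mathsf{rank}(M^{\mcT_n}_\infty)$, and these ranks are respected by the further iteration maps. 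Both of these are handled in Mitchell--Schimmerling \cite[Lemma 3.2]{cwcc}; the point of the present lemma is merely to record that that argument goes through verbatim for the phalanxes $(W, Q, \kappa^*)$ arising here, all of which are iterable in the sense of $\mbfM$ by the construction in this section, so no new idea is needed beyond citing and transcribing that proof.
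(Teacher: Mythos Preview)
Your proposal is correct and matches the paper's approach exactly: the paper's own proof consists solely of the citation ``See the proof of Mitchell--Schimmerling \cite[Lemma 3.2]{cwcc},'' and what you have written is a faithful outline of that argument---compose the witnessing trees into a single iteration on $W$, observe that each link in the chain forces a drop (from $W$ being on the branch, from a truncation, or from passing to a proper initial segment), and contradict iterability. Your identification of the two bookkeeping issues (normality of the composed tree, and that the proper-initial-segment case genuinely contributes a rank decrease) is exactly what the cited proof handles, so nothing further is required here.
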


\begin{proof}
See the proof of Mitchell-Schimmerling \cite[Lemma 3.2]{cwcc}. \end{proof}  

The above lemma says that if we start with the phalanx $(W,W,\kappa^*)$ and
build a linear chain of iteration trees $\mcT_i$ such that $\mcT_0$ is on
$(W,W,\kappa^*)$ and $\mcT_{i+1}$ is on $(W,Q_{i+1},\kappa^*)$, where $Q_{i+1}$
is the last model of $\mcT_i$, then for some finite $n$ the model $Q_{n+1}$ is
on the main branch of $\mcT_n$ and there is no truncation on the branch. 

Pick $\tilde{X}\in S^*$ and let 
\begin{equation*}
S^{**}=\{Z\in S^*\mid Y(\tilde{X})\in Z\}.
\end{equation*}
Note that $X^*\in Z$ whenever $Z\in S^{**}$. Lemma~\ref{u.l.minimal} is
formulated for $\mbfV$, but we apply it in $\mbfM$ for each individual 
$X\in S^{**}$. Given $X\in S^{**}$, either $\ult(W,F_{X^*,X})$ is ill-founded
or else $(W,\ult(W,F_{X^*,X}),\kappa_X)$ is not iterable. This follows from
our choice of $S$ in (\ref{u.e.bad-s}), where the requirement is made for
$G_{\tilde{X}}$ and $\lambda_{\tilde{X}}$ in place of $F_{X^*,X}$ and
$\kappa_X$, respectively. However, for $X\in S^{**}$ we have
$F_{X^*,X}=F_{\tilde{X},X}\circ G_{\tilde{X}}\circ F_{X^*,\tilde{X}}$ where we
view extenders as maps, which, together with some little effort, which we
leave to the reader, yields the above conclusion (the argument in the first two
paragraph of the proof of Lemma~\ref{u.l.absolute-minimal} can be used to get
the conclusion for the composition $F_{\tilde{X},X}\circ G_{\tilde{X}}$; the
presence of $F_{X^*,\tilde{X}}$ requires an additional argument, which is
neverheless easy). By Lemma~\ref{u.l.minimal} applied in $\mbfM$, for 
every $X\in S^{**}$ there is a $\stl$-minimal $Q_X$ such that the phalanx
$(W,Q_X,\kappa^*)$ is iterable, and either $\ult(Q_X,F_{X^*,X})$ is
ill-founded or else $(W,\ult(Q_X,F_{X^*,X}),\kappa_X)$ is not iterable. So
$Q_X\in\mbfM$, and the notion of iterability is also in the sense of $\mbfM$. 
The next lemma says that 
it is possible to find a structure $X\in S^{**}$ such that $Q_X$ can replace
$Q_Y$ for club many structures $Y$. 

\begin{lemma}\label{u.l.absolute-minimal}
There is an $X\in S^{**}$ such that for all $Y\in S^{**}$ with $X\in Y$, the
following hold in $\mbfM$:
\begin{itemize}
\item[(a)] Either $\ult(Q_X,F_{X^*,Y})$ is ill-founded or else  
           $(W,\ult(Q_X,F_{X^*,Y}),\kappa_Y)$ is not iterable. 
\item[(b)] The premouse $Q_X$ is $\stl$-minimal, that is, if $Q'\stl Q_X$
           then {\rm(a)} fails with $Q'$ in place of $Q_X$. 
\end{itemize}
\end{lemma}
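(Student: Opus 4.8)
The plan is to extract the structure $X$ from the stationary set $S^*$ by a reflection argument, exactly in the spirit of Mitchell--Schimmerling \cite[\S3]{cwcc}. First I would observe that for each $X\in S^*$ we already have, by Lemma~\ref{u.l.minimal} applied in $\mbfM$, a $\stl$-minimal premouse $Q_X\in\mbfM$ such that $(W,Q_X,\kappa^*)$ is iterable in $\mbfM$ while either $\ult(Q_X,F_{X^*,X})$ is ill-founded or $(W,\ult(Q_X,F_{X^*,X}),\kappa_X)$ is not iterable in $\mbfM$. The issue is that a priori $Q_X$ depends on $X$, and we want a single $Q_X$ that works uniformly for club-many larger $Y\in S^*$; this is what (a) and (b) assert. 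The key point is that the relation $\stl$ is well-founded on $\mathfrak{Q}=\trcl(\{W\})$ by Lemma~\ref{u.l.minimal}, so we may pick $X\in S^*$ for which the $\stl$-rank of $Q_X$ is minimal among $\{Q_Y\mid Y\in S^*\}$, or more precisely minimal among a suitable stationary subset, and then argue that this minimality is inherited.

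The main steps I would carry out are as follows. (1) Fix a large regular $\theta^{**}$ so that $S^*,W,F_{X^*,\,\cdot\,},\mathfrak{Q}$, and the relevant parameters all belong to $H^{\mbfM}_{\theta^{**}}$, and work in $\mbfM$ throughout. (2) For each $X\in S^*$, note that the statement ``$(W,\ult(Q_X,F_{X^*,Y}),\kappa_Y)$ is ill-behaved (ill-founded or non-iterable)'' makes sense for every $Y\supseteq X$ in $S^*$ with $\kappa_Y>\kappa_X$, using the coherence properties recorded after Lemma~\ref{u.l.elem} (in particular that $\sigma_{X^*,Y}$ and hence $F_{X^*,Y}$ depend only on the ordinals $\kappa_{X^*},\kappa_Y$, not on the structures). (3) Consider the function on $S^*$ sending $X$ to the $\stl$-rank $r(X)$ of $Q_X$ in $\mathfrak{Q}$; since $S^*$ is stationary and $r$ is ordinal-valued, by Fodor-type/pressing-down considerations (or simply by well-foundedness of the ordinals) there is a stationary $S^{**}\subseteq S^*$ on which $r$ is constant, say equal to $\rho_0$, and indeed we may shrink further so that $Q_X$ is ``the same'' for all $X\in S^{**}$ in the sense that $Q_X=Q_Y$ whenever $X,Y\in S^{**}$ — here one uses that each $Q_X\in H^{\mbfM}_{\delta^+}$ (it is a premouse of bounded size) together with stationarity of $S^{**}$ to get a fixed $Q^*$ appearing as $Q_X$ stationarily often, via the same pigeonhole/pressing-down device used for $X^*$ in the passage preceding (\ref{u.e.fix}). (4) Pick any $X\in S^{**}$ with the additional property that $H^{\mbfM}_{\theta^{**}}$-elementary reflection is available, i.e. there is $X'\prec H^{\mbfM}_{\theta^{**}}$ with $S^{**},Q^*\in X'$ and $X'\cap H_\theta=X$ in the style of the definition of $S'$ above; then for $Y\in S^{**}$ with $X\in Y$, the pair (embedding, factor map) $(\idm,k)$ between $\ult(Q^*,F_{X^*,X})$ and $\ult(Q^*,F_{X^*,Y})$ transfers ill-behavedness upward exactly as in the derivation of (\ref{u.e.reflection}), giving (a). (5) For (b), suppose $Q'\stl Q^*$; then $Q'$ has strictly smaller $\stl$-rank than $\rho_0$, and $(W,Q',\kappa^*)$ is iterable in $\mbfM$; if (a) held with $Q'$ in place of $Q^*$ for all such $Y\in S^*$ with $X\in Y$, then in particular it would hold for $Y=X$ (taking $Y=X$ or the least admissible $Y$), contradicting the $\stl$-minimality built into the choice of $Q_X=Q^*$ via Lemma~\ref{u.l.minimal}; more carefully, one reflects this ``$Q'$ also witnesses ill-behavedness stationarily often'' statement down through $X'$ as in step (4) to contradict that $\rho_0$ was the minimal rank attained stationarily often.

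The routine parts — the \L o\'s-theorem computations showing $(\idm,k)$ is a phalanx embedding that transfers ill-foundedness and non-iterability, and the bookkeeping with the maps $\sigma^*_X,\tilde\sigma_X,\sigma'_X$ — are exactly as in the derivation of (\ref{u.e.reflection}) above and in \cite[\S3]{cwcc}, so I would cite those rather than repeat them. The step I expect to be the main obstacle is step (3)–(5): getting the minimality of the $\stl$-rank to genuinely reflect, i.e.\ ensuring that the ``bad'' structure $Q^*$ chosen for one $X$ really is $\stl$-minimal relative to \emph{all} larger $Y\in S^*$ simultaneously, and not merely for stationarily many. The resolution is the pressing-down/reflection machinery already deployed for $X^*$: because $S^{**}$ is stationary and the rank function is regressive-like after fixing a common $Q^*$, any putative $Q'\stl Q^*$ that witnessed (a) cofinally often would, upon reflecting through a suitable $X'\prec H^{\mbfM}_{\theta^{**}}$ containing $S^{**}$ and $Q'$, produce a stationary set of structures for which a strictly $\stl$-smaller witness exists, contradicting the minimality of $\rho_0$. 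Once (a) and (b) are in hand, the contradiction completing Theorem~\ref{t0.1}.(a) is reached as outlined: $F_{X^*,Y}$ coheres to $W$ by Lemma~\ref{u.l.coherence}, the phalanx iterability needed for Lemma~\ref{cmt:abs} will follow from (b) by the usual argument that a $\stl$-minimal ill-behaved $Q^*$ must in fact equal $W$, and then Lemma~\ref{cmt:abs} forces $F_{X^*,Y}\in W$, contradicting that its induced ultrapower map takes $\tau_{X^*}$ cofinally into $\tau_Y$.
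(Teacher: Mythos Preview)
Your approach differs substantially from the paper's, and step (3) as you have written it does not go through.

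The paper's proof is much shorter and avoids any attempt to stabilize $Q_X$ on a stationary set. It first observes (as you do in step (4)) that (a) holds automatically for every $X\in S^*$ and every $Y\in S^*$ with $X\in Y$, via the factor-map embedding $(\idm,k)$ of $(W,\ult(Q_X,F_{X^*,X}),\kappa_X)$ into $(W,\ult(Q_X,F_{X^*,Y}),\kappa_Y)$. Hence if the lemma fails, then for every $X\in S^*$ there is $Y\in S^*$ with $X\in Y$ witnessing the failure of (b), i.e.\ some $Q'\stl Q_X$ still has the bad property at $Y$. Iterating this choice produces a sequence $\langle X_i\mid i<\omega\rangle$ in $S^*$ with $X_i\in X_{i+1}$ and $Q_{X_{i+1}}\stl Q_{X_i}$ for all $i$. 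This is an infinite $\stl$-descending chain; by absoluteness of well-foundedness between $\mbfV$ and $\mbfM$, the relation $(\stl)^\mbfM$ is then ill-founded in $\mbfM$, contradicting Lemma~\ref{u.l.minimal}. No rank function, no pressing down, no reflection through an auxiliary $X'\prec H^\mbfM_{\theta^{**}}$ is needed.

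Your step (3) has a genuine gap. The function $r(X)=\stl\text{-rank}(Q_X)$ is ordinal-valued, but there is no reason it should be regressive: its values need not lie below $\kappa_X$ or inside $X$, so Fodor does not apply, and ``well-foundedness of the ordinals'' only hands you a minimum value, not a stationary fiber. Your further claim that $Q_X\in H^\mbfM_{\delta^+}$ is unjustified: $Q_X$ is produced by iterating phalanxes of the form $(W,\,\cdot\,,\kappa^*)$, and those iteration trees can be arbitrarily long, so there is no a priori bound on the size of $Q_X$. The pigeonhole device that fixed $X^*$ earlier therefore does not transfer. Step (5) then compounds the problem, since the reflection you sketch presupposes the stationary stabilization you have not established (and the case ``$Y=X$'' you invoke is excluded by the hypothesis $X\in Y$).

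A rank-based argument \emph{can} be made to work, but more simply than you propose: redefine each $Q_X$ to have minimum $\stl$-rank among all $\stl$-minimal witnesses at $X$, and take $X\in S^*$ realizing the overall minimum of these ranks. Then (a) follows as before, and any failure of (b) at some $Y\in S^*$ with $X\in Y$ would exhibit a witness at $Y$ of strictly smaller rank, contradicting minimality. This is already more bookkeeping than the paper's descending-chain argument, however, and your write-up does not carry it out.
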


\begin{proof}
Notice that if $X\in Y$, and either $\ult(Q_X,F_{X^*,X})$ is ill-founded, or 
 $$ (W,\ult(Q_X,F_{X^*,X}),\kappa_X) $$ 
is not iterable, then (a) holds, that is, either 
$\ult(Q_X,F_{X^*,Y})$ is ill-founded, or else the phalanx 
 $$ (W,\ult(Q_X,F_{X^*,Y}),\kappa_Y) $$ 
is not iterable. 

This follows from the fact that the extender $F_{X^*,Y}$ is ``larger" than
$F_{X^*,X}$. More precisely, let 
 $$ k:\ult(Q_X,F_{X^*,X})\to \ult(Q_X,F_{X^*,Y}) $$ 
be the factor map. Then the pair $(\idm,k)$ is an
embedding of the phalanx $(W,\ult(Q_X,F_{X^*,X}),\kappa_X)$ into the phalanx 
$(W,\ult(Q_X,F_{X^*,Y}),\kappa_Y)$. Informally, since $Y$ is ``larger" than $X$,
the structure $Q_X$ is still ``bad" in the sense that it satisfies (a), but
need not be $\stl$-minimal. 
 
Assuming the lemma is false, for every $X\in S^{**}$ there is some $Y\in
S^{**}$ with $X\in Y$, 
such that $Q_X$ is not minimal such that (a) holds. Letting $X_0=X$,
construct inductively a sequence $\langle X_i\mid i\in\omega\rangle$ such that 
$X_i\in X_{i+1}\in S^{**}$ and $Q_{X_{i+1}}\stl Q_{X_i}$. That is, there is an
iteration tree $\mcT_i\in\mbfM$ on $(W,Q_{X_i},\kappa)$ such that
$Q_{X_{i+1}}$  is the last model of $\mcT_i$, and either $W$ is on the main
branch of $\mcT_i$, or else $Q_{X_i}$ is on the main branch of $\mcT_i$, in
which case either there is a truncation on the main branch of $\mcT_i$, or else
$Q_{X_{i+1}}$ is a proper initial segment of the last model of $\mcT_i$. The
sequence $\langle Q_{X_i}\mid i\in\omega\rangle$ witnesses that the relation
$(\stl)^\mbfM$ is ill-founded in the sense of $\mbfV$. Notice that, by
minimality, each $Q_{X_i}$ is a set. By absoluteness of
well-foundedness, $(\stl)^\mbfM$ is ill-founded in the sense of $\mbfM$, a
contradiction. \end{proof}  

Now pick $X$ as in Lemma~\ref{u.l.absolute-minimal}, and $Y\in S^{**}$ such that
$X^*,X,Y(X)\in Y$. Working in $\mbfM$, compare the phalanxes
$(W,Q_X,\kappa^*)$ and $(W,N^*_X,\kappa^*)$. Let $\mcU$ and $\mcV$ be the
iteration trees arising in the comparison. That is, $\mcU$ is on
$(W,Q_X,\kappa^*)$. 

Because $W$ witnesses the soundness of
$\mbfK^\mbfM\cut\delta^+$, the standard argument shows that $W$ cannot be on
the main branches of both trees. We now get a final contradiction by ruling
out all other possibilities. 

We begin with the observation that $Q_X$ must be on the main branch of
$\mcU$. Otherwise, $N^*_X$ is on the main branch of $\mcV$, as follows from the
previous paragraph. Furthermore, since $W$ is on the main branch of $\mcU$, and
$W$ computes successor cardinals correctly on a stationary class of
cardinals, there is no truncation on the main branch of $\mcV$. Letting $R$ be
the last model on $\mcV$, we have $R\stl Q_X$, which by
Lemma~\ref{u.l.absolute-minimal} means that $\ult(R,F_{X^*,Y})$ is
well-founded and $(W,\ult(R,F_{X^*,Y}),\kappa_Y)$ is iterable. On the other
hand, by (\ref{u.e.reflection}) we have that either $\ult(N^*_X,F_{X^*,Y})$ is
ill-founded or else $(W,\ult(N^*_X,F_{X^*,Y}),\kappa_Y)$ is not
iterable. Moreover, let 
 $$ k:\ult(N^*_X,F_{X^*,Y})\to\ult(R,F_{X^*,Y}) $$ 
be the map defined by $k([a,f])=[a,\pi^{\mcV}(f)\rst[\kappa^*]^{|a|}]$, 
where $\pi^\mcV:N^*_X\to R$ is the
map along the main branch of $\mcV$. Then the pair $(\idm,k)$ is an embedding of
the phalanx $(W,\ult(N^*_X,F_{X^*,Y}),\kappa_Y)$ into
$(W,\ult(R,F_{X^*,Y}),\kappa_Y)$. This is a contradiction, which completes the proof 
that $Q_X$ must be on the main branch of $\mcU$.

Let $Q$ be the last model on $\mcU$, and let $R$, as above, be the last model on
$\mcV$. We next argue that there is no truncation on the main branch of $\mcU$
and $Q\unlhd R$. Otherwise, a simple discussion by cases yields that there
is no truncation on the main branch of $\mcV$, and $R\unlhd Q$. Moreover, $R\stl Q_X$. 
Notice also that $Q_X$ is a set, because any 
counterexample to iterability is witnessed by a set-sized initial segment of the model. 
From this it follows that $N^*_X$ is on the main branch of $\mcV$. 
Now it is easy to see that we can get a contradiction as in the
previous paragraph.  

To summarize, we are left with the following situation: 
\begin{itemize}
\item $Q_X$ is on the main branch of $\mcU$.  
\item There is no truncation on the main branch of $\mcU$, and $Q\unlhd R$. 
\end{itemize}
Recall that the pair $(\idm,\sigma'_X)$ is an embedding of the phalanx
$(W,N^*_X,\kappa^*)$ into the model $W$. The map $\sigma'_X$ is defined below
(\ref{u.e.sigma-star}), and when viewed as a map from $N^*_X$ into $W$, it is
$\Sigma_0$-preserving and cardinal-preserving. (Here we again use the fact
that $\theta_0$ was chosen to be a cardinal in $W$.) Let $\mcV'$ be the copy
of $\mcV$ via the pair $(\idm,\sigma'_X)$, let $R'$ be the last model of
$\mcV'$, and let $\sigma':R\to R'$ be the copy map. Also, let $\pi^\mcU:Q_X\to
Q$ be the iteration map along the main branch of $\mcU$. 
Now $R\res\tau_R=N^*_X\res\tau_R$, where
$\tau_R=(\kappa^*)^{+R}$ and, by the copying construction,
$\sigma'\rst(R\res\tau_R)=\sigma'_X\rst(R\res\tau_R)$. It follows that the
extender on $R$ derived from $\sigma'_X$ is compatible with $F_{X^*,X}$ and
measures all sets in $\ptm(\kappa^*)\cap Q=\ptm(\kappa^*)\cap Q_X$, since
there is no drop in the main branch of $\mcU$. 

Let $\tilde{R}=\sigma'(Q)$; if $Q=R$, we of course let $\tilde{R}=R'$. Define
 a map $k:\ult(Q_X,F_{X^*,X})\to \tilde{R}$ by  
\[
 k:[a,f]\mapsto\sigma'\circ\pi^\mcU(f)(a)
\]
for $a\in[\kappa_X]^{<\omega}$ and $f\in Q_X$ such that
$\dom(f)=[\kappa^*]^{|a|}$.

Using \L o\'s theorem, a straightforward computation similar to that in
the proof of (\ref{u.e.reflection}) shows that $k$ is
$\Sigma_0$-preserving. It is also easy to see that $k$ is cardinal 
preserving. Immediately from the definition of $k$ we see that 
$k\rst\kappa_X=\idm$. It follows that the pair $(\idm,k)$ is an embedding of
the phalanx $(W,\ult(Q_X,F_{X^*,X}),\kappa_X)$ into the phalanx
$(W,\tilde{R},\kappa_X)$. Since the iteration indices in $\mcV$ are above 
$\kappa^*$, the iteration indices in $\mcV'$ are above
$\sigma'_X(\kappa^*)=\kappa_X$, so $(W,\tilde{R},\kappa_X)$ is a $W$-generated
phalanx in the sense of Steel \cite[Definition 6.7]{cmip}. As $W$ is 
embeddable into $\Kc$, the phalanx $(W,\tilde{R},\kappa_X)$ is embeddable into a
$\Kc$-generated phalanx, and is therefore iterable (see Steel
\cite[Lemma 6.9]{cmip}). It follows that also
$(W,\ult(Q_X,F_{X^*,X}),\kappa_X)$ is iterable, which contradicts the
definition of $Q_X$.  

Thus, assuming that $\ult(W,G_X)$ is ill-founded or else 
$(W,\ult(W,G_X),\lambda_X)$ fails to be iterable for all $X\in S$ where $S$
comes from (\ref{u.e.bad-s}), we arrived at a final contradiction, thereby
completing the proof of Theorem~\ref{t0.1}(a). 

\subsection{Proof of Theorem \ref{t0.1}(b)}

Finally we turn to the proof of Theorem~\ref{t0.1}(b). Here we work under the
assumption that $0^\P$ does not exist in ${\mbfM}$, which means
that, if we use premice with Jensen's indexing of non-overlapping extenders,
as described in 
Zeman~\cite[Chapter 8]{imlc}, then the comparison process can be carried out
using just linear iterations. In particular, $\mbfK^\mbfM$ is always linearly
iterable in the sense of $\mbfV$, no matter how large premice exist in
$\mbfV$. Linear iteration trees along with the condensation properties of such
premice allow us, for an extender with support below $\delta^+$, to reduce the
question of well-foundedness and iterability of $\ult(\mbfK^\mbfM,F)$ to that
of well-foundedness and iterability of $\ult(\mbfK^\mbfM\cut\delta^+,F)$,
thereby avoiding any direct use of the frequent extension argument. We note
here that since we deal with linear iterations, the fact that
$\ult(\mbfK^\mbfM,F)$ is well-founded already implies that
$\ult(\mbfK^\mbfM,F)$ is fully fine structurally iterable. We will
give the details of such a reduction below. In what
follows we will refer to Zeman~\cite[Chapter 8]{imlc} on the core model theory
below $0^\P$.   

So assume $\delta\ge\omega_1$ and the trees $\mcT,\mcU$ come from the
coiteration of $\mbfK^\mbfM$ against $N$. Thus we let $W=\mbfK^\mbfM$ in this
case; otherwise we keep the rest of the notation as at the beginning of
this section. There is a slight non-uniformity which comes into the argument
when dealing with the cases $\delta>\omega_1$ and $\delta=\omega_1$. 

If $\delta>\omega_1$ the length of the trees $\mcT$ and $\mcU$ is $\delta$,
as we assume, toward a contradiction, that $N$ out-iterates
$\mbfK^\mbfM\cut\delta$. In the case where $\delta=\omega_1$, our hypothesis
reads that $\mbfK^\mbfM\cut\omega_2$ fails to be universal with respect to
countable mice in $\mbfV$, so in this case we assume $N$ iterates past
$\mbfK^\mbfM\cut\omega_2$.  

For the rest of the argument, write $\delta^*=\max(\delta,\omega_2)$. Notice
however that even in the case 
where $\delta=\omega_1$, the extenders $F_{\xi,\xi'}$ are defined as in
Lemma~\ref{u.l.coherence}, that is, using only the trees $\mcT\rst\delta$ and
$\mcU\rst\delta$. Refering back to Lemma~\ref{u.l.coherence}, pick
$\kappa<\lambda$ in $C^*$. Here $C^*\subseteq\delta$, hence $\lambda<\delta$. 
Recall also that, consistently with the notation fixed below
(\ref{u.e.bad-s}), we write $W_\alpha$ for $M^\mcT_\alpha$ and $N_\alpha$ for
$M^\mcU_\alpha$.  

\begin{lemma}\label{u.l.iterability-end}
Let $F^*_{\kappa,\lambda}$ be the extender at $(\kappa,\lambda)$ on $N_\kappa$
derived from $\pi^\mcU_{\kappa,\lambda}$. Then
$\tilde{N}=\ult(N_{\delta^*},F^*_{\kappa,\lambda})$ is well-founded and
iterable. 

It follows that
$\ult(N_{\delta^*}\cut\delta^*,F^*_{\kappa,\lambda})$ is well-founded and
iterable.  
\end{lemma}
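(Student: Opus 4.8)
The plan is to reduce everything to the iterability of $N_{\delta^*}$ itself --- which holds since $N_{\delta^*}$ is a $\Sigma$-iterate of the iterable premouse $N$ --- and to use that, with Jensen indexing and $0^\P$ absent, the iterations in play are essentially linear, so that a reflection argument together with the countable completeness of $F^*_{\kappa,\lambda}$ takes the place of the frequent extension machinery of part~(a).

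First I would record the agreements forced by $\kappa,\lambda\in C^*$. By property~(i) of $C^*$ we have $\crp(\pi^\mcU_{\kappa,\xi'})=\kappa$ and $\crp(\pi^\mcU_{\lambda,\xi'})=\lambda$ for every $\xi'\in b^\mcU$ lying above $\kappa$, resp.\ $\lambda$; taking $\xi'=\delta^*$, the models $N_\kappa$ and $N_{\delta^*}$ agree up to $\kappa^{+N_\kappa}$ and $N_\lambda$ and $N_{\delta^*}$ agree up to $\lambda^{+N_\lambda}$. Hence $F^*_{\kappa,\lambda}$ measures exactly $\ptm(\kappa)\cap N_\kappa=\ptm(\kappa)\cap N_{\delta^*}$, so $\tilde N$ is meaningful; and writing $\pi^\mcU_{\kappa,\delta^*}=\pi^\mcU_{\lambda,\delta^*}\circ\pi^\mcU_{\kappa,\lambda}$ with $\pi^\mcU_{\kappa,\lambda}(\kappa)=\lambda=\crp(\pi^\mcU_{\lambda,\delta^*})$ one sees that $F^*_{\kappa,\lambda}$ is nothing but the restriction, to generators below $\lambda$, of the extender of the genuine iteration map $\pi^\mcU_{\kappa,\delta^*}:N_\kappa\to N_{\delta^*}$. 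Being derived from an elementary embedding into the well-founded premouse $N_\lambda$ that is the identity below $\kappa$, the extender $F^*_{\kappa,\lambda}$ is \emph{countably complete}.

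Well-foundedness of $\tilde N$ is then free, since a countably complete extender has well-founded ultrapowers of every structure it measures; identify $\tilde N$ with its transitive collapse and let $i:N_{\delta^*}\to\tilde N$, $\crp(i)=\kappa$, be the ultrapower map. For iterability I would argue by reflection. Supposing toward a contradiction that $\mcS$ is a putative normal iteration tree witnessing that $\tilde N$ is not iterable, take a countable elementary substructure $X$ of a large $H_\Theta$ with $N_{\delta^*},F^*_{\kappa,\lambda},\mcS\in X$, let $\sigma:\bar N_{\delta^*}\to N_{\delta^*}$ be the inverse of the collapse of $X$, and let $\bar F,\bar{\mcS}$ be the collapses of $F^*_{\kappa,\lambda},\mcS$. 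By elementarity $\bar{\mcS}$ witnesses that $\ult(\bar N_{\delta^*},\bar F)$ is not iterable. Using the countable completeness of $F^*_{\kappa,\lambda}$ --- applied to the countable family of sets coming from $\bar N_{\delta^*}$ --- obtain an order-preserving $h:X\cap\lambda\to\kappa$ such that $[a,f]_{\bar F}\mapsto\sigma(f)(h[a])$ is a well-defined, fully elementary embedding of $\ult(\bar N_{\delta^*},\bar F)$ into $N_{\delta^*}$. As $N_{\delta^*}$ is iterable and copying of linear iteration trees commutes with the canonical choice of branches, $\ult(\bar N_{\delta^*},\bar F)$ is iterable, a contradiction.

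For the second assertion: since $\delta^*>\lambda$ is not moved by $i$ and the support of $F^*_{\kappa,\lambda}$ lies below $\delta^*$, the restriction of $i$ exhibits (the transitive collapse of) $\ult(N_{\delta^*}\cut\delta^*,F^*_{\kappa,\lambda})$ as an initial segment of $\tilde N$, so it inherits well-foundedness and iterability, and the condensation theory of premice below $0^\P$ identifies it as expected. I expect the genuinely delicate point to be the iterability argument --- verifying that the threading map $h$ really makes $[a,f]_{\bar F}\mapsto\sigma(f)(h[a])$ total and \emph{fully} elementary on all of $\ult(\bar N_{\delta^*},\bar F)$, and that the subsequent copying of $\bar{\mcS}$ up to $N_{\delta^*}$ yields a legitimate tree there; the preliminary observation that $F^*_{\kappa,\lambda}$ is countably complete is what lets both this and the well-foundedness go through without invoking the frequent extension argument.
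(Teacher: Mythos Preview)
Your approach is genuinely different from the paper's, and unfortunately it contains a real gap.

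The paper does \emph{not} invoke countable completeness at all. Instead it produces an explicit embedding of $\tilde N=\ult(N_{\delta^*},F^*_{\kappa,\lambda})$ into an iterate of $N$. Concretely: set $\sigma=\pi^\mcU_{\kappa,\delta}:N_\kappa\to N_\delta$ and copy the linear tree $\mcU\rst[\kappa,\delta^*)$ along $\sigma$ to obtain a linear tree $\mcU'$ on $N_\delta$ with last model $N'_{\delta^*}$ and copy map $\sigma':N_{\delta^*}\to N'_{\delta^*}$. Since all critical points used in $\mcU\rst[\kappa,\delta^*)$ are $\ge\kappa$, one has $\sigma'\rst(N_{\delta^*}\cut\tau'_\kappa)=\sigma\rst(N_\kappa\cut\tau'_\kappa)$, so the $(\kappa,\lambda)$-extender derived from $\sigma'$ is exactly $F^*_{\kappa,\lambda}$. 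The factor map $[a,f]\mapsto\sigma'(f)(a)$ then embeds $\tilde N$ into $N'_{\delta^*}$, which is iterable as an iterate of the iterable premouse $N$. Well-foundedness and iterability of $\tilde N$ follow at once; the last clause is immediate.

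The gap in your argument is the assertion that $F^*_{\kappa,\lambda}$ is countably complete ``being derived from an elementary embedding into a well-founded premouse.'' That justification is not sufficient, and in fact the conclusion is false in the very situation you need it. Suppose $\cof^{\mbfV}(\kappa)=\omega$ --- this is \emph{always} the case when $\delta=\omega_1$, which is precisely the new content of part~(b), and can occur for $\delta>\omega_1$ as well, since nothing in the choice of $\kappa\in C^*$ forces uncountable cofinality. Pick $\kappa_n\nearrow\kappa$ and any $\alpha\in[\kappa,\lambda)$. Then for each $n$ the set $x_n=\kappa\setminus\kappa_n$ lies in $N_\kappa$ and $\alpha\in\pi^\mcU_{\kappa,\lambda}(x_n)$, so $x_n\in(F^*_{\kappa,\lambda})_{\{\alpha\}}$. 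Any threading $h$ would have to send $\alpha$ into $\bigcap_n x_n=\varnothing$. Moreover, in your reflection step the countable $X\prec H_\Theta$ will contain such a cofinal $\omega$-sequence (since $\kappa\in X$ and $X$ is elementary), so exactly this obstruction appears among the ``countable family of sets coming from $\bar N_{\delta^*}$'' to which you propose to apply countable completeness. Hence neither your well-foundedness argument nor your iterability argument goes through as written. The paper's copying argument sidesteps this entirely by never leaving the realm of genuine iterates of $N$.
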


\begin{proof}
By the definition of the set $C^*$ there is an iteration map
$$ \pi^\mcU_{\kappa,\delta}:N_\kappa\to N_\delta. $$ 
Write $\sigma$ for
$\pi^\mcU_{\kappa,\delta}$, and use $\sigma$ to copy the linear iteration tree
$\mcU\rst[\kappa,\delta^*)$ onto a linear iteration tree $\mcU'$ on
$N_\delta$. This is easy to do, as the iteration trees are linear 
and the extenders used in these trees are internal. We thus obtain a copy map
$\sigma':N_{\delta^*}\to N'_{\delta^*}$ where we write $N'_\alpha$ for
$M^{\mcU'}_\alpha$. Notice that if $\delta=\omega_1$, there may be a truncation
on the tree $\mcU\rst[\delta,\delta^*)$, but it will not have any negative
effect on the argument below. Since $\sigma$ is $\Sigma^*$-preserving, so is
$\sigma'$. By the choice of $\kappa$, the critical points in the tree 
$\mcU\rst[\kappa,\delta^*)$ are at least $\kappa$, and the iteration indices are
at least $\tau'_\kappa=\kappa^{+N_\kappa}$ (this notation is consistent with
that fixed in Lemma~\ref{l4}), as $\mcU\rst[\kappa,\delta)$ does
not involve any truncation. It is also easy to see that
$\tau'_\kappa=\kappa^{+N_{\delta^*}}$. It follows that
$N_\kappa\cut\tau'_\kappa=N_{\delta^*}\cut\tau'_\kappa$ and 
$\sigma'\rst(N_{\delta^*}\cut\tau'_\kappa)=
 \sigma\rst(N_\kappa\cut\tau'_\kappa)$.  
In particular, the extender at $(\kappa,\lambda)$ derived from $\sigma'$ is
precisely $F^*_{\kappa,\lambda}$. This tells us that the ultrapower
$\ult(N_{\delta^*},F^*_{\kappa,\lambda})$ can be embedded into the premouse
$N'_{\delta^*}$ via the natural factor embedding
$k:[a,f]_{F^*_{\kappa,\lambda}}\mapsto\sigma'(f)(a)$ which is
$\Sigma^*$-preserving, as both $\sigma'$ and the ultrapower embeddings
are. Now $N'_{\delta^*}$, being an iterate of an iterable premouse, is
itself iterable, which guarantees the iterability of
$\ult(N_{\delta^*},F^*_{\kappa,\lambda})$. The last conclusion of the lemma is
an immediate consequence. \end{proof} 

We are now going to use a variant of the ``shift lemma" that will allow us to
embed $\ult(\mbfK^\mbfM\cut\delta^*,F_{\kappa,\lambda})$ into
$\ult(N_{\delta^*}\cut\delta^*,F^*_{\kappa,\lambda})$. Given structures
$Q,Q'$, a $\Sigma_0$-preserving map $\sigma:Q\to Q'$, extenders $G$ at
$(\kappa,\lambda)$ on $Q$ and $G'$ at $(\kappa,\lambda')$ on $Q'$ such that
$\sigma[\kappa]\subseteq\kappa$, and an order preserving map
$\mfk:\lambda\to\lambda'$, we write $(\sigma,\mfk):(Q,G)\to(Q',G')$ if and
only if for every $a\in[\lambda]^{<\omega}$ and 
$x\in\ptm([\kappa]^{|a|})\cap Q$, 
\[
x\in G_a\;\Longrightarrow\;\sigma(x)\cap[\kappa]^{|a|}\in G'_{\mfk[a]}.
\]
This corresponds to the similar notion discussed in Zeman
\cite[\S 2.5]{imlc}. The slight difference between the notion used here and
that in Zeman~\cite{imlc} is that, in our situation, it may happen that
$\sigma(\crp(G))>\crp(G')$. If $(\sigma,\mfk):(Q,G)\to(Q',G')$, then we can run
the proof of the shift lemma and show that there is precisely one
$\Sigma_0$-preserving embedding 
\[
\sigma':\ult(Q,G)\to\ult(Q',G') 
\]
such that $\sigma'\rst\lambda=\mfk$ and
$\sigma'\circ\pi_G=\pi_{G'}\circ\sigma$, where $\pi_G,\pi_{G'}$ are the
corresponding ultrapower embeddings. Moreover, if 
$Q\models\zfc^-$ and $\sigma$ is fully elementary, so is $\sigma'$. 

\begin{lemma}\label{u.l.iterability-front}
Let $\kappa<\lambda$ be in $C^*$, and let $F^*_{\kappa,\lambda}$ be as in
Lemma~\ref{u.l.iterability-end}. Then the following hold:
\begin{itemize}
\item[(a)] $(\pi^\mcT_{0,\delta^*}\rst(\mbfK^\mbfM\cut\delta^*),
             \pi^\mcT_{0,\delta^*}\rst\lambda):
            (\mbfK^\mbfM\cut\delta^*,F_{\kappa,\lambda})\to
            (N_{\delta^*}\cut\delta^*,F^*_{\kappa,\lambda})$. 
\item[(b)] $\ult(\mbfK^\mbfM\cut\delta^*,F_{\kappa,\lambda})$ is well-founded
           and iterable. 
\end{itemize}
\end{lemma}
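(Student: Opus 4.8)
The plan is to verify clause (a) by a direct computation with the derived extenders, and then to deduce clause (b) by applying the shift-lemma machinery just set up, together with Lemma~\ref{u.l.iterability-end}. For (a), recall that $F_{\kappa,\lambda}$ is the $(\kappa,\lambda)$-extender derived from $\pi_{\kappa,\lambda}=(\pi^\mcT_{0,\lambda})^{-1}\circ\pi^\mcU_{\kappa,\lambda}\circ\pi^\mcT_{0,\kappa}\rst(W\cut\tau_\kappa)$, while $F^*_{\kappa,\lambda}$ is derived from $\pi^\mcU_{\kappa,\lambda}\rst N_\kappa$. The key point is that along the main branch $b^\mcT$ the critical points are cofinal below $\delta$, and each $\xi\in C^*$ satisfies $\pi^\mcT_{0,\xi}(\xi)=\xi$ together with $\xi=\crp(\pi^\mcT_{\xi,b^\mcT})$-type behavior (clauses (i)--(iii) on $C^*$ and Lemma~\ref{l4}); consequently $\pi^\mcT_{0,\delta^*}$ has critical point above $\lambda$ on the relevant initial segments, so $\pi^\mcT_{0,\delta^*}\rst\lambda$ is order preserving and $\pi^\mcT_{0,\delta^*}\rst(W\cut\tau_\lambda)=\mathrm{id}$ up to the point needed. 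First I would spell out that for $a\in[\lambda]^{<\omega}$ and $x\in\ptm([\kappa]^{|a|})\cap W$ with $x\in(F_{\kappa,\lambda})_a$, unwinding the definition of $F_{\kappa,\lambda}$ gives $a\in\pi_{\kappa,\lambda}(x)$, i.e. $a\in(\pi^\mcT_{0,\lambda})^{-1}(\pi^\mcU_{\kappa,\lambda}(\pi^\mcT_{0,\kappa}(x)))$; applying $\pi^\mcT_{0,\lambda}$, and then $\pi^\mcT_{\lambda,\delta^*}$, and using that $W\cut\delta$ agrees with $N_{\delta^*}\cut\delta$ below $\delta$ (so $\pi^\mcT_{0,\kappa}(x)$ viewed inside $N_\kappa$ is the same set), one gets $\pi^\mcT_{0,\delta^*}(a)\cap[\kappa]^{|a|}=\pi^\mcT_{0,\delta^*}(a)\in(F^*_{\kappa,\lambda})_{(\pi^\mcT_{0,\delta^*}\rst\lambda)[a]}$. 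This is exactly the condition $(\pi^\mcT_{0,\delta^*}\rst(W\cut\delta^*),\pi^\mcT_{0,\delta^*}\rst\lambda):(W\cut\delta^*,F_{\kappa,\lambda})\to(N_{\delta^*}\cut\delta^*,F^*_{\kappa,\lambda})$ in the sense defined above, establishing (a).

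For (b), having (a) in hand I would invoke the shift-lemma statement proved in the paragraph preceding the lemma: since $(\sigma,\mfk):(Q,G)\to(Q',G')$ with $\sigma=\pi^\mcT_{0,\delta^*}\rst(W\cut\delta^*)$ fully elementary and $\mfk=\pi^\mcT_{0,\delta^*}\rst\lambda$ order preserving, there is a unique $\Sigma_0$-preserving (in fact fully elementary, since $\sigma$ is) embedding
\[
\sigma':\ult(W\cut\delta^*,F_{\kappa,\lambda})\to\ult(N_{\delta^*}\cut\delta^*,F^*_{\kappa,\lambda})
\]
with $\sigma'\rst\lambda=\mfk$ and $\sigma'\circ\pi_{F_{\kappa,\lambda}}=\pi_{F^*_{\kappa,\lambda}}\circ\sigma$. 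By Lemma~\ref{u.l.iterability-end}, $\ult(N_{\delta^*}\cut\delta^*,F^*_{\kappa,\lambda})$ is well-founded and iterable. Well-foundedness of the target together with the existence of the embedding $\sigma'$ gives well-foundedness of $\ult(W\cut\delta^*,F_{\kappa,\lambda})$; and an iteration tree on $\ult(W\cut\delta^*,F_{\kappa,\lambda})$ that had a last ill-founded model or no cofinal well-founded branch could be copied via $\sigma'$ to such a tree on $\ult(N_{\delta^*}\cut\delta^*,F^*_{\kappa,\lambda})$ (the copying construction works since the target picks unique branches, being $1$-small and below $0^\P$), contradicting its iterability. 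Hence $\ult(W\cut\delta^*,F_{\kappa,\lambda})$ is iterable.

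A couple of bookkeeping points I expect to be the only real friction. First, I must be careful that the sets $x\subseteq[\kappa]^{|a|}$ over which the two derived extenders are computed genuinely live in structures that $\pi^\mcT_{0,\delta^*}$ sends correctly into $N_{\delta^*}$: this is where I use that $W\cut\delta^*$ and $N_{\delta^*}\cut\delta^*$ agree below $\delta$ and that $\tau_\kappa=\kappa^{+W}=\kappa^{+N_{\delta^*}}$ (from Lemma~\ref{l4}(b) applied along $\mcU$, exactly as in the proof of Lemma~\ref{u.l.iterability-end}), so that $\pi^\mcT_{0,\delta^*}$ restricted to the $W\cut\tau_\kappa$ part behaves as the identity and the coherence set-up matches. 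Second, since we are in the regime where $0^\P$ does not exist in $\mbfM$, all iterations in sight are linear and all branches unique, so there is no subtlety in the copying argument for (b); the only mild non-uniformity is the possible truncation on $\mcU\rst[\delta,\delta^*)$ when $\delta=\omega_1$, which—just as noted in the proof of Lemma~\ref{u.l.iterability-end}—does not interfere, because the extenders $F_{\kappa,\lambda}$ and $F^*_{\kappa,\lambda}$ are defined using only $\mcT\rst\delta$ and $\mcU\rst\delta$, and the truncation happens strictly above $\lambda$. I therefore expect the main obstacle to be purely notational: keeping straight which copy of each structure ($W$-side versus $N$-side, below versus above $\delta$) a given set belongs to while unwinding the derived-extender definitions.
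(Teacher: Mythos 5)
Your proposal takes exactly the route the paper does: the paper's proof of this lemma is a single sentence, noting that clause (a) is a direct computation resting on the fact that $\crp(\pi^\mcT_{\lambda,\delta^*})\ge\lambda$, and that (b) then follows immediately by the shift-lemma paragraph together with Lemma~\ref{u.l.iterability-end}; your unwinding of the derived extenders and the copy argument for (b) supply the intended details correctly.

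A few slips are worth repairing, though none of them is a gap in substance. You assert that ``$\pi^\mcT_{0,\delta^*}\rst(W\cut\tau_\lambda)=\mathrm{id}$'' and later that ``$\pi^\mcT_{0,\delta^*}$ restricted to the $W\cut\tau_\kappa$ part behaves as the identity''; both are false, since by clause (iii) on $C^*$ the critical points on $b^\mcT$ are cofinal below $\kappa$, so $\crp(\pi^\mcT_{0,\delta^*})<\kappa$. The map whose near-identity behavior actually drives the computation is $\pi^\mcT_{\lambda,\delta^*}$ (critical point $\ge\lambda$), which is what yields $\mfk=\pi^\mcT_{0,\delta^*}\rst\lambda=\pi^\mcT_{0,\lambda}\rst\lambda$ and, for $x\subseteq[\kappa]^{|a|}$, that $\sigma(x)\cap[\kappa]^{|a|}=\pi^\mcT_{0,\kappa}(x)$. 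In the displayed conclusion of your calculation you also have $\pi^\mcT_{0,\delta^*}(a)$ where $\pi^\mcT_{0,\delta^*}(x)$ was meant (as written, $\pi^\mcT_{0,\delta^*}(a)\cap[\kappa]^{|a|}$ is not the right kind of object). Finally, $\tau_\kappa=\kappa^{+W}$ is not in general equal to $\kappa^{+N_{\delta^*}}=\tau'_\kappa$; the agreement you actually need is that $\pi^\mcT_{0,\kappa}$ sends $\ptm(\kappa)\cap W$ into $\ptm(\kappa)\cap W_\kappa=\ptm(\kappa)\cap N_\kappa$, which is what Lemma~\ref{l4} provides.
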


\begin{proof}
Clause (a) follows by a straightforward computation using the fact that
$\crp(\pi^\mcT_{\lambda,\delta^*})\ge\lambda$. Recall that we are using
Jensen's indexing of non-overlapping extenders. This 
computation uses elementary facts about this indexing. Clause (b) follows from 
(a) and Lemma~\ref{u.l.iterability-end}. \end{proof} 

Now let $S=\ptm_\delta(\delta^+)\cap\mbfM$, pick $\theta$ large regular in
$\mbfV$, and let $S^\theta$ be as in Lemma~\ref{u.l.s-theta}. If we pick $X\in
Y$ such that both $X,Y\in S^\theta$, and let $\kappa=\kappa_X$ and
$\lambda=\kappa_Y$, then $F_{\kappa,\lambda}\in\mbfM$. From now on, we work
entirely in $\mbfM$. By Lemma~\ref{u.l.iterability-front},
$\ult(\mbfK^\mbfM\cut\delta^*,F_{\kappa,\lambda})$ is well-founded and
iterable. We show that this conclusion can be extended to $\mbfK^\mbfM$. 

\begin{lemma}\label{u.l.iterability-k}
$\ult(\mbfK^\mbfM,F_{\kappa,\lambda})$ is well-founded and iterable. 
\end{lemma}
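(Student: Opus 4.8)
The plan is to reduce the well-foundedness and iterability of $\ult(\mbfK^\mbfM,F_{\kappa,\lambda})$ to that of $\ult(\mbfK^\mbfM\cut\delta^*,F_{\kappa,\lambda})$, which is already granted by Lemma~\ref{u.l.iterability-front}. The key point is that we are working under the assumption that $0^\P$ does not exist in $\mbfM$, so premice with Jensen indexing admit linear comparison, and $\mbfK^\mbfM$ enjoys the condensation properties of extender models below $0^\P$ as developed in Zeman \cite[Chapter 8]{imlc}. Since the extender $F_{\kappa,\lambda}$ has critical point $\kappa$ and support $\lambda<\delta$, and since $\mbfK^\mbfM$ and $\mbfK^\mbfM\cut\delta^*$ agree below $\delta^*$, the ultrapower $\ult(\mbfK^\mbfM,F_{\kappa,\lambda})$ agrees with $\ult(\mbfK^\mbfM\cut\delta^*,F_{\kappa,\lambda})$ on a sufficiently long initial segment, namely up to (at least) the image of $\lambda^{+\mbfK^\mbfM}$ under the ultrapower map.

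First I would set $W=\mbfK^\mbfM$, let $\pi:W\to\ult(W,F_{\kappa,\lambda})$ be the ultrapower embedding, and similarly $\bar\pi:W\cut\delta^*\to\ult(W\cut\delta^*,F_{\kappa,\lambda})$. Since $F_{\kappa,\lambda}$ coheres to $W$ by Lemma~\ref{u.l.coherence} and has all its generators below $\lambda<\delta^*$, the natural factor map $k:\ult(W\cut\delta^*,F_{\kappa,\lambda})\to\ult(W,F_{\kappa,\lambda})$, defined by $k([a,f]_{F})=[a,f]_F$ (viewing $f\in W\cut\delta^*\subseteq W$), is $\Sigma_0$-preserving and is the identity on a long initial segment; in particular $\ult(W,F_{\kappa,\lambda})$ end-extends $\ult(W\cut\delta^*,F_{\kappa,\lambda})$. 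So the well-foundedness of the former follows from the well-foundedness of the latter together with the fact that $W$ itself is well-founded (the ultrapower of a well-founded model by an extender with bounded support, agreeing up to its support with a well-founded ultrapower, is well-founded — the only possible ill-foundedness would already appear below the support). Hence $\ult(W,F_{\kappa,\lambda})$ is transitive.

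For iterability, I would invoke the theory below $0^\P$: since $0^\P$ does not exist in $\mbfM$, every iteration tree on an extender model in $\mbfM$ is (equivalent to) a linear iteration, and iterability of a premouse is detected by iterability of its sufficiently tall initial segments together with condensation. More precisely, $\ult(W,F_{\kappa,\lambda})$ is a proper class premouse all of whose set-sized initial segments past $\pi(\lambda^{+W})$ either sit inside $\ult(W\cut\delta^*,F_{\kappa,\lambda})$ (which is iterable) or are obtained from $W$ by an application of $\pi$ above the support; in the latter case the iterability is inherited from the iterability of $W=\mbfK^\mbfM$ in $\mbfV$ (and hence in $\mbfM$) by copying any putative tree back via $\pi$ on the part above $\lambda$. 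Formally, given a linear iteration tree $\mcV$ on $\ult(W,F_{\kappa,\lambda})$, its restriction to the part using extenders with critical points $\ge\lambda$ can be copied onto a tree on $W$ via the ultrapower embedding $\pi$ (which has critical point $\kappa<\lambda$, so it is cofinal enough above $\lambda$), while the initial part using extenders with critical points $<\lambda$ is a tree on $W\cut\delta^*$ (or rather on $\ult(W\cut\delta^*,F_{\kappa,\lambda})$, which is iterable by Lemma~\ref{u.l.iterability-front}). Composing the strategies handles $\mcV$.

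The main obstacle I expect is the bookkeeping at the ``seam'' $\lambda$: one must check that extenders in a tree on $\ult(W,F_{\kappa,\lambda})$ with critical point exactly $\lambda$, or straddling the support, are handled correctly — i.e.\ that the copying via $\pi$ and the appeal to iterability of $\ult(W\cut\delta^*,F_{\kappa,\lambda})$ glue into a single coherent strategy. This is exactly the kind of situation the condensation-theoretic machinery of Zeman \cite[Chapter 8]{imlc} is designed to handle in the linear setting, so I would cite the relevant condensation and pull-back lemmas there rather than reprove them, and note (as the surrounding text already does) that linearity of the comparison process below $0^\P$ is what makes this reduction available without the frequent extension argument used in part~(a).
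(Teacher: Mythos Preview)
Your proposal has a genuine gap in both halves. For well-foundedness, the parenthetical principle ``the only possible ill-foundedness would already appear below the support'' is false: the ultrapower $\ult(W,F_{\kappa,\lambda})$ is built from \emph{all} functions $f:[\kappa]^{|a|}\to W$ in $W$, including those with range unbounded in $W$, and an ill-foundedness witnessed by such functions can sit arbitrarily high above the support. The well-foundedness of $\ult(W\cut\delta^*,F_{\kappa,\lambda})$ tells you nothing about the part of $\ult(W,F_{\kappa,\lambda})$ above the image of $\delta^*$. For iterability, your copying is pointed the wrong way: $\pi$ maps $W$ \emph{into} $\ult(W,F_{\kappa,\lambda})$, so it cannot be used to pull a tree on the ultrapower back to $W$. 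Splitting a tree at $\lambda$ and feeding the pieces to two different strategies does not yield a coherent strategy, even in the linear case.

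The paper's argument supplies the missing idea: a reflection to a set-sized premouse, followed by an appeal to universality of $\mbfK^\mbfM\cut\delta^*$. Assuming failure, one passes to a large initial segment $Q=\mbfK^\mbfM\cut\nu$ witnessing it, puts a witness tree into a hull $Z\prec H^\mbfM_{\theta^*}$ of size $<\delta$ with $\bar\delta=Z\cap\delta>\lambda$, and collapses to obtain $\bar Q$ with $\ult(\bar Q,F_{\kappa,\lambda})$ still bad. Since $\delta^*\ge\omega_2^\mbfM$, the model $\mbfK^\mbfM\cut\delta^*$ is universal in $\mbfM$ for premice of size $<\delta^*$, so it iterates past $\bar Q$; a condensation fact specific to Jensen indexing below $0^\P$ (Zeman \cite[Lemma~8.2.2]{imlc}) shows the critical points on the $\bar Q$-side of this comparison lie above $\bar\delta>\lambda$, so the comparison map commutes with the $F_{\kappa,\lambda}$-ultrapower. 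One then copies the comparison onto $\ult(\mbfK^\mbfM\cut\delta^*,F_{\kappa,\lambda})$ --- iterable by Lemma~\ref{u.l.iterability-front} --- and embeds $\ult(\bar Q,F_{\kappa,\lambda})$ (via the last comparison model) into an iterate of that iterable model, a contradiction. The reflection step is precisely what bridges the gap between the set-sized case, where Lemma~\ref{u.l.iterability-front} applies, and the proper-class case; your direct approach attempts to skip it and does not go through.
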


\begin{proof}
Assume the contrary. Working in $\mbfM$, let $\nu$ be a $\mbfK^\mbfM$-cardinal
large enough such that, letting $Q=\mbfK^\mbfM\cut\nu$, the ultrapower
$\ult(Q,F_{\kappa,\lambda})$ is either ill-founded or not iterable,
and let $\mcV$ be a linear iteration tree on $Q$ in $\mbfM$ witnessing
this. Again, if $\ult(Q,F_{\kappa,\lambda})$ is ill-founded, we let $\mcV$ be
the tree of length $1$ consisting of this ill-founded ultrapower, in order to
treat the two cases uniformly. Let $\theta^*$ be a large regular
cardinal such that $\mcV\in H^\mbfM_{\theta^*}$, and let $Z$ be an elementary
substructure of $H^\mbfM_{\theta^*}$ of size less than $\delta$ such that
$F_{\kappa,\lambda},\mcV\in Z$ and $\bar{\delta}=Z\cap\delta\in\delta$. Also,
let $H$ be the transitive collapse of $Z$, and let $\sigma:H\to
H^\mbfM_{\theta^*}$ 
be the inverse to the Mostowski collapsing isomorphism. Notice that
$\bar{\delta}>\lambda$ and $\sigma(F_{\kappa,\lambda})=F_{\kappa,\lambda}$. 
Letting $(\bar{Q},\bar{\mcV})=\sigma^{-1}(Q,\mcV)$, the linear iteration tree
$\bar{\mcV}$ witnesses the ill-foundedness/non-iterability of
$\ult(\bar{Q},F_{\kappa,\lambda})$. One can of course focus merely on the
proof of well-foundedness, as for linear iterability, well-foundedness implies
iterability (as already mentioned above). However, the proof is the same for
both, and does not need to rely on this fact. We will use the following
observation which, as was pointed out to us by the referee, can also be proved
for premice with Mitchell-Steel indexing:   

\begin{claim}
Let $\mcR,\bar{\mcR}$ be the linear iteration trees on $Q,\bar{Q}$
respectively, coming from the comparison of $Q$ against $\bar{Q}$. Then the
critical points in $\bar{\mcR}$ are above~$\bar{\delta}$. 
\end{claim}

\begin{proof}
If $\bar{\delta}$ is not overlapped by an extender in $\bar{Q}$ then this is
immediate. So suppose $\bar{\delta}$ is overlapped. Here we apply a condensation
result particular for the indexing of the extenders we are using here; recall
this is the indexing described in Zeman \cite[Chapter 8]{imlc}. It follows that
either $\bar{Q}$ is a proper initial segment of $Q$ or else there is
$\mu<\bar{\delta}$ and $\beta>\bar{\delta}$ such that 
$E^{\bar{Q}}_\beta=\varnothing$, $\crp(E^{\bar{Q}}_\alpha)=\mu$ whenever
$\mu^{+\bar{Q}}\le\alpha<\beta$ and $\alpha$ indexes an extender in $\bar{Q}$,
and for every $\alpha>\beta$ that indexes an extender in $\bar{Q}$ we have
$\crp(E^{\bar{Q}}_\alpha)>\beta$. In the former case the comparison of $Q$
against $\bar{Q}$ is trivial, so let us focus on the latter case. 

Since $\sigma:\bar{Q}\to Q$ is elementary
and $\crp(\sigma)=\bar{\delta}$ then, by Zeman \cite[Lemma 8.2.2]{imlc},
$E^{\bar{Q}}\rst\beta=E^Q\rst\beta$. It follows that in the coiteration of $Q$
against $\bar{Q}$, all iteration indices used in $\bar{\mcR}$ are larger than
$\beta$, but by the discussion above, also the corresponding critical points
are above $\beta$, and hence above $\bar{\delta}$. 
\end{proof} 

We can now complete the proof of the lemma. Since $\delta^*\ge\omega_2^\mbfM$,
the initial segment $\mbfK^\mbfM\cut\delta^*$ is universal for all premice in
$\mbfM$ of size less than $\delta^*$; this follows by adjusting the proof  
for Steel's core model in Schimmerling-Steel~\cite{mcm} to the indexing we are
currently using. Hence $\mbfK^\mbfM\cut\delta^*$ iterates past $\bar{Q}$, and 
the coiteration terminates after less than $\delta^*$ steps. Let $\mcR'$ be
the linear iteration tree on the $\mbfK^\mbfM\cut\delta^*$-side of the
coiteraton of $\bar{Q}$ against $\mbfK^\mbfM\cut\delta^*$. Since
$\mbfK^\mbfM\cut\delta^*\unlhd Q$ 
and $\delta^*$ is a regular cardinal, the iteration tree on the $\bar{Q}$-side
of this coiteration is precisely $\bar{\mcR}$, and the iteration tree $\mcR'$
is essentially the same as $\mcR$ with the only difference that
$M^{\mcR'}_\alpha=M^\mcR_\alpha\cut\delta^*$ for all $\alpha<\lht(\mcR)$. 
By the above discussion, there is an iteration map
$\pi^{\bar{\mcR}}_\infty:\bar{Q}\to M^{\bar{\mcR}}_\infty$  where
$M^{\bar{\mcR}}_\infty$ is the last model on $\bar{\mcR}$. Moreover,  
$M^{\bar{\mcR}}_\infty\unlhd M^{\mcR'}_\infty$. By the Claim,
$\crp(\pi^{\bar{\mcR}}_\infty)\ge\bar{\delta}>\lambda$. It follows that
$\ult(\bar{Q},F_{\kappa,\lambda})$ can be embedded into
$\ult(M^{\bar{\mcR}}_\infty,F_{\kappa,\lambda})$ by
$[a,f]_{F_{\kappa,\lambda}}\mapsto[a,\pi^{\bar{\mcR}}_\infty(f)]$, which shows
that $\ult(M^{\bar{\mcR}}_\infty,F_{\kappa,\lambda})$ is either ill-founded, or
else not iterable. It follows that $\ult(M^{\mcR'}_\infty,F_{\kappa,\lambda})$
is ill-founded or not iterable. 

On the other hand, $Q^*=\ult(\mbfK^\mbfM\cut\delta^*,F_{\kappa,\lambda})$
is well-founded and iterable by Lemma~\ref{u.l.iterability-front}(b).  Let
$\rho$ be the associated ultrapower map. Use $\rho$ to copy the linear
iteration tree $\mcR'$ onto a linear iteration tree $\mcR^*$ on $Q^*$ via the
map $\rho$. Let $\rho^*:M^{\mcR'}_\infty\to M^{\mcR^*}_\infty$ be the copy map
between the last two models in the copy construction. Since the iteration
indices of $\mcR'$ are above $\bar{\delta}$, the map $\rho^*$ agrees with
$\rho$ below $\bar{\delta}$. It follows that the extender on
$M^{\mcR'}_\infty$ at $(\kappa,\lambda)$ derived from $\rho^*$ is precisely
$F_{\kappa,\lambda}$, hence $\ult(M^{\mcR'}_\infty,F_{\kappa,\lambda})$ can be
embedded into $M^{\mcR^*}_\infty$ via the natural map
$[a,f]_{F_{\kappa,\lambda}}\mapsto\rho^*(f)(a)$. As $\mcR^*_\infty$, being an
iterate of an iterable premouse, is itself iterable, this proves the
iterability of $\ult(M^{\mcR'}_\infty,F_{\kappa,\lambda})$. Now we have
reached a contradiction with the conclusion of the previous paragraph, thereby
completing the proof of Lemma~\ref{u.l.iterability-k}.  
\end{proof}

We can now finish the proof of Theorem~\ref{t0.1}(b). Here we could use a
criterion on absorption of extenders by $\mbfK$ which can be found on page~274
in Chapter~8 in Zeman~\cite{imlc} and which, as pointed out by the referee,
needs some corrections. However, to keep the text more uniform, we use a
variant of Lemma~\ref{cmt:abs} for the current indexing. Let
$\mbfK'=\ult(\mbfK^\mbfM,F_{\kappa,\lambda})$. Then $\mbfK^\mbfM$ coiterates
with $\mbfK'$ above $\lambda$, as otherwise $\mbfK$ would have an extender
overlapping $\lambda$ which would imply that also $N_\delta$ has such an
extender; see the paragraph immediately above
Lemma~\ref{u.l.iterability-end} for notation. Our indexing however does not
allow this, as 
this would mean that in at least one of the premice $N_i$ there is an overlap
of extenders. We now coiterate $\mbfK^\mbfM$ against $\mbfK'$ and, as the
critical points of the coiteration are above $\lambda$, similarly as
in the proof of Lemma~\ref{cmt:abs} we conclude that
$F_{\kappa,\lambda}\in\mbfK^M$. As in that proof, this gives a contradiction
since $F_{\kappa,\lambda}$ codes a map which collapses 
$\lambda^{+\mbfK^\mbfM}$ in $\mbfK^\mbfM$. This completes the proof of
Theorem~\ref{t0.1}(b).  

\subsection{An improvement suggested by the referee}\label{ss.referees-version}

The following improvement of Theorem~\ref{t0.1}, which we record as
Theorem~\ref{u.t.main-imp}, was suggested by the referee. We first introduce
some terminology, also suggested by the referee. 

Let $\delta$ be regular and $P$ be a premouse of height $\delta$. We say that
$P$ is neatly universal with respect to a class of premice $\mcS$ iff for
every $N\in\mcS$ there is a terminal comparison $(\mcT,\mcU)$ of $P$ against
$N$ such that $\mcM^\mcU_\infty$ is an initial segment of $\mcM^\mcT_\infty$,
there is no drop on the main branch of $\mcU$, the comparison uses only
extenders of length $<\delta$, and if the comparison is of length $\delta$
then $i^\mcU[\delta]\subseteq\delta$ where $i^\mcU$ is the iteration map along
the main branch of $\mcU$. 

We say that a premouse $N$ is short iff $N$ is $1$-small and, for some
$n\in\omega$, $N$ is $n$-sound and for
every $n$-maximal normal $Q$-structure guided iteration tree $\mcT$ on $N$ of
limit length, the $Q$-structure for $\mcT$ exists and is of the form
$J_\alpha(\mcM(\mcT))$ for some ordinal $\alpha$. We say that a short premouse
is short tree normally iterable iff $N$ is normally iterable with respect to
the $Q$-structure guided strategy.  

Let $\delta$ be regular. A premouse $R$ of height $\delta$ is anomalous iff
$R$ has a largest cardinal $\mu$ which is singular in $R$ and, letting
$\kappa=\cof^R(\mu)$, there are unboundedly many $\alpha<\kappa^{+R}$ such
that $\crp(E^R_\alpha)=\kappa$. The point of introducing the notion of
anomalous premouse is that if $P$ is an anomalous premouse which has a total
extender on its sequence with critical point $\cof^P(\mu_P)$ where $\mu_P$
witnesses that $P$ is anomalous then the ultrapower by such an extender has
height strictly larger than $\delta$. This may cause potential problems with
standard arguments when attempting a proof of universality of $R$.  

\begin{theorem}\label{u.t.main-imp}
Let $\delta>\omega$ be regular, and let $\mbfM$ be a proper class inner model
of $\zfc$. Then the following hold.
\begin{itemize}
\item[(a)] Assume that there is no proper class inner model with a Woodin
           cardinal from the point of view of $\mbfM$, and $S_\delta$ is
	   stationary in the sense of $\mbfV$.  Then
	   $\mbfK^\mbfM\cut\delta$ is neatly universal with respect to all
	   $(\delta+1)$-short tree normally iterable short premice of height
	   $<\delta$ in $\mbfV$. Moreover, if $\mbfK^\mbfM\cut\delta$ is not
	   anomalous, then ``$<\delta$" can be replaced with ``$\le\delta$".  
\item[(b)] Assume $\mbfM$ believes that $0^\P$ does not exist, and
           $\ptm_\delta(\delta^+)\cap\mbfM$ is stationary. If
	   $\delta>\omega_1^\mbfM$ then $\mbfK^\mbfM\cut\delta$ is neatly
	   universal with respect to all $(\delta+1)$-iterable premice of height
	   $<\delta$, and if $\mbfK^\mbfM\cut\delta$ is not anomalous then
	   ``$<\delta$" can be replaced with ``$\le\delta$". If
	   $\delta=\omega_1^\mbfM$ (hence $=\omega_1^\mbfV$) then
	   $\mbfK^\mbfM\cut\omega_2^M$ is universal with respect to all
	   countable premice, and if $\mbfK^\mbfM\cut\omega_2^M$ is not
	   anomalous then ``countable" can be replaced with ``of height
	   $\le\omega_1$".  
\end{itemize}
\end{theorem}

In the case of (a), our argument can be run as the comparison of $N$ against
$\mbfK^\mbfM\cut\delta$ is successful for premice as in (a) under the weaker
hypothesis that $\mbfM\models\mbox{``There is no proper class inner model with
a Woodin cardinal"}$, and a closer inspection of our frequent extension
argument reveals that it merely used $\delta>\omega_1^\mbfM$ rather than
$\delta>\omega_1$. 

In the case of (b), if $\delta>\omega_1$ all we really used in the proof of
Theorem~\ref{t0.1}(b) was that $\delta^*>\omega_1^\mbfM$. (See the paragraph
immediately preceding Lemma~\ref{u.l.iterability-end} for the definition of
$\delta^*$.)  If $\delta=\omega_1^\mbfM$ we similarly only used that
$\delta^*\ge\omega_2^\mbfM$. There was another place in the proof where we
referred to $\omega_2$, namely when we argued that $N$ iterates past
$\mbfK^\mbfM\cut\omega_2$, but one can easily check that the argument goes
through if we merely know that $N$ iterates past
$\mbfK^\mbfM\cut\omega_2^\mbfM$.

\section{Discussion}\label{sec:dis}

In this section we discuss some issues concerning why we are unable to
prove our main result in its full generality, and outline a possible approach
that may lead to removing all our restricting assumptions, at least in the
absence of Woodin cardinals. Our conjecture thus can be formulated as follows: 

\begin{conjecture}\label{d.conj}
Assume that there is no proper class inner model with a Woodin cardinal. Let
$\delta$ be an uncountable regular cardinal, and let $\mbfM$ be a proper class
inner model such that $\ptm_{\delta}(\delta^+)\cap\mbfM$ is stationary. Then
the following hold:
\begin{itemize}
\item[(a)] If $\delta>\omega_1^\mbfM$, then $\mbfK^\mbfM\cut\delta$ is
           universal for  
           all iterable premice in $\mbfV$ of cardinality less than $\delta$. 
\item[(b)] If $\delta=\omega_1^\mbfM$ (hence $\omega_1^\mbfM=\omega_1^\mbfV$),
           then $\mbfK^\mbfM\cut\omega_2$ is 
           universal for all countable iterable premice in $\mbfV$. 
\end{itemize}
\end{conjecture}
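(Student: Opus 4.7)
The plan is to follow the skeleton of the proof of Theorem~\ref{t0.1}.(a) in both clauses, setting $\delta^* := \delta$ in (a) and $\delta^* := \omega_2$ in (b). Assuming the conclusion fails, the coiteration of $W = \mbfK^\mbfM$ against an iterable mouse $N \in \mbfV$ of cardinality less than $\delta^*$ yields iteration trees $(\mcT, \mcU)$ of length $\delta^* + 1$ with $\mcU$ past $W \cut \delta^*$. Part (A) of Section~\ref{sect:univ}, up through Lemma~\ref{u.l.s-theta}, nowhere invokes the cofinality clause defining $S_\delta$ and goes through verbatim under the weaker stationarity hypothesis: for stationarily many pairs $X \in Y$ in $\ptm(H_\theta)$ with $X \cap \delta^+ \in \mbfM$, one obtains a $W$-extender $F_{X,Y} \in \mbfM$ cohering to $W$ whose induced ultrapower map sends $\tau_X$ cofinally into $\tau_Y$. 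It therefore suffices to show, inside $\mbfM$, that $\ult(W, F_{X,Y})$ is well-founded and that the phalanx $(W, \ult(W, F_{X,Y}), \kappa_Y)$ is normally iterable, for by Lemma~\ref{cmt:abs} applied in $\mbfM$ this forces $F_{X,Y} \in W$, contradicting the cofinality of its induced ultrapower map.

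I would attempt the frequent extension argument of Part~(B) verbatim, introducing countable-in-$\mbfM$ substructures $Z_X \prec H^\mbfM_{\theta^*}$ containing $F_{X,Y}$ and the relevant data, collapsing to $\bar N_X$ and $\bar{\mcT}_X$, and seeking a pressing-down stabilization $X^*$. The sole obstruction is the step in which one finds, for each candidate $X$, an ordinal $\alpha < \kappa_X$ with $\sigma_X[Z_X \cap \tau_X] \subseteq f[\alpha]$ where $f \colon \delta \to \tilde\tau$ is a surjection in $\mbfM$; this step uses $\cof^\mbfM(\kappa_X) > \omega$ essentially, and without it $\sigma_X[Z_X \cap \tau_X]$ need not be bounded in the $f$-enumeration below $\kappa_X$. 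To sidestep this I would enlarge the $Z_X$ to absorb a cofinal $\omega$-sequence $\langle \kappa_X^n : n<\omega \rangle$ in $\kappa_X$ witnessing $\cof^\mbfM(\kappa_X) = \omega$ when it holds, press down piecewise on the sets $\sigma_X[Z_X \cap (\kappa_X^n)^{+W}]$ to obtain $\mbfM$-regressive functions indexed by $n$, and diagonalize to produce a single stabilizing $X^*$, replacing the single interpolant $N^*_X$ with a direct limit along the resulting $\omega$-chain of partial interpolants.

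The main obstacle is verifying that the interpolant so constructed still carries the required properties — fully elementary $\tilde\sigma_X$, $\Sigma_0$- and cardinal-preserving $\sigma'_X$ viewed as a map into $W$, agreement $\sigma'_X \circ \tilde\sigma_X = \rho_X \rst \bar N_X$ — and in particular that the copied extender on $\bar N'_X$ remains total over $\bar N_X$ in the sense needed to reproduce (\ref{u.e.reflection}). Here the invocation of Steel~\cite[Lemma 6.13]{cmip}, which requires $\theta_0$ to be a successor cardinal in $W$ for the copy construction to witness non-iterability downstairs, becomes especially delicate in the countable-cofinality case. The negative results of R\"asch-Schindler~\cite{ncp} strongly suggest that no naive piecewise variant of the frequent extension will suffice, and that a genuinely new ingredient is required. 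A plausible alternative route is to exploit the full iterability of $W$ in $\mbfV$ furnished by Corollary~\ref{cmt:c2}: establish iterability of the relevant phalanx in $\mbfV$ directly, and then descend to $\mbfM$ via the absoluteness of iterability of Lemma~\ref{cmt:l1}, applied to a proper class end-extension of the phalanx. In clause (b) every $\kappa_X < \omega_1$ has countable cofinality even in $\mbfV$, so the countable-cofinality case is the entire difficulty there; it is in this setting that a $\mbfV$-side iterability argument, transferred back to $\mbfM$, seems the most promising path.
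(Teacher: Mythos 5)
This statement is Conjecture~\ref{d.conj}, which the paper explicitly leaves open: the Discussion section states ``Unfortunately, we do not know how to extend our proof of Theorem~\ref{t0.1}.(b) to the more general situation with no inner models with Woodin cardinals,'' and then sketches partial progress and obstacles rather than a proof. You have correctly sensed this — your write-up is candid that the frequent extension step using $\cof^\mbfM(\kappa_X)>\omega$ is the genuine obstruction, and you correctly invoke R\"asch--Schindler~\cite{ncp} as evidence that no naive repair works. So there is no proof here to compare against; what can be compared is your obstruction analysis against the paper's.

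On that comparison, your diagnosis differs from the paper's in a way worth noting. Your proposed repairs attack the frequent extension argument head-on (diagonalizing a piecewise pressing-down over an $\omega$-sequence cofinal in $\kappa_X$; or proving phalanx iterability in $\mbfV$ and transferring to $\mbfM$). The paper instead follows a different decomposition, adapted from the proof of Theorem~\ref{t0.1}.(b): it shows that, \emph{without} the cofinality assumption, one can still establish what it calls~(\ref{d.e.bottom}) — well-foundedness and iterability of the small phalanx $(\mbfK^\mbfM\cut\delta,\,\ult(\mbfK^\mbfM\cut\delta,F_{X,Y}),\,\kappa_Y)$ — via a carefully arranged copying construction that embeds $\mcU\rst\kappa$ into $\mcU$ along a system of maps. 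The missing ingredient the paper identifies is how to \emph{lift} iterability from this small phalanx to the phalanx $(W,\ult(W,F_{X,Y}),\kappa_Y)$, where $W$ is the full soundness witness; that lifting is exactly where the frequent extension argument was doing its work in the proof of Theorem~\ref{t0.1}.(a), and where, below $0^\P$, linearity of trees made the lift essentially free. Your $\mbfV$-side alternative does not obviously help here: Lemma~\ref{cmt:l1} as proved is about a single proper class extender model $R$, not a phalanx with a parameter $\kappa_Y$, and even granting a phalanx version, one would still be facing exactly the question of why $(W,\ult(W,F_{X,Y}),\kappa_Y)$ is iterable anywhere — the transfer lemma does not manufacture iterability, it only moves it between universes. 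Likewise, the $\omega$-diagonalization sketch founders on the step you flag yourself: after unioning the partial interpolants, the resulting $N^*_X$ must still embed into $W$ by a single $\Sigma_0$- and cardinal-preserving map $\sigma'_X$ satisfying $\sigma'_X\circ\tilde\sigma_X=\rho_X\rst\bar N_X$, and R\"asch--Schindler's counterexamples are precisely about the failure of such covering without the cofinality hypothesis. In short: your instincts about where the difficulty lies are sound, but the paper's own partial progress (isolating the lifting problem as the residual open step once~(\ref{d.e.bottom}) is in hand) is sharper and suggests the right place to look is not a repaired frequent extension argument but a new lifting mechanism — possibly exploiting the structure of the coiteration trees above $\delta$, as the paper speculates in its remarks on the $\delta=\omega_1$ case and the branch $[0,\delta^+]_\mcT$.
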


Theorem~\ref{t0.1}(b) says that the above conclusions (a) and (b) hold under
the rather restrictive anti-large cardinal hypothesis that a sharp for an
inner model with a strong cardinal does not exist. Unfortunately, we do not
know how to extend our proof of Theorem~\ref{t0.1}(b) to the more general 
situation with no inner models with Woodin cardinals. We do, however, have
some partial results along these lines that we proceed to describe.

\subsection{The case $\delta>\omega_1^\mbfM$}\label{ss.dgo1}
 
One drawback of our proof of Theorem~\ref{t0.1}(a) is the direct use of the
frequent extension argument. As we have already explained above, the use of
this argument requires us to work with the set $S_\delta$ from (\ref{i.e.sd})
in place of $\ptm_\delta(\delta^+)\cap\mbfM$, as it relies on the fact that
countable subsets of 
structures in $S_\delta$ can be internally covered. And, as explained before,
there is no way around, by the results in R\"asch-Schindler \cite{ncp} and
references 
therein. Our proof of Theorem~\ref{t0.1}(b) avoids the direct use of
the frequent extension argument by showing that for suitable $\delta$ and
$X,Y$, (i) $\ult(\mbfK^\mbfM\cut\delta,F_{X,Y})$ is well-founded and iterable,
and (ii) the question of well-foundedness and iterability of
$\ult(\mbfK^\mbfM,F_{X,Y})$ can be reduced to that of the well-foundedness and
iterability of $\ult(\mbfK^\mbfM\cut\delta,F_{X,Y})$. Here, (ii) relies on the
result from Schimmerling-Steel \cite{mcm} that $\mbfK^\mbfM\cut\delta$ is
universal for iterable premice of size less than $\delta$ in $\mbfM$ whenever
$\delta\ge\omega_2^\mbfM$. Under the assumption ``there is no proper class
inner model with a Woodin cardinal", (ii) needs to be replaced with a stronger
statement, namely  that $\ult(W,F_{X,Y})$ is well-founded and the phalanx
$(W,\ult(W,F_{X,Y}),\kappa_Y)$ is iterable, where $W$ is the extender model
witnessing the soundness of a suitably long initial segment of
$\mbfK^\mbfM$. 

In the following, we outline a way of generalizing the approach from the proof
of Theorem~\ref{t0.1}(b) for $\delta>\omega_1$ to the situation where we
assume there are no proper class inner models with a Woodin cardinal. The
argument from the proof of Theorem~\ref{t0.1}(b) can be generalized to obtain
the following:   

\begin{equation}\label{d.e.bottom}
\begin{array}{l}\ult(\mbfK^\mbfM\cut\delta,F_{X,Y})\mbox{ is well-founded, and }\\
(\mbfK^\mbfM\cut\delta,\ult(\mbfK^\mbfM\cut\delta,F_{X,Y}),\kappa_Y)\mbox{ is
      iterable.}\end{array}
\end{equation}

However, we do not know how to use (\ref{d.e.bottom}) along with the
universality of $\mbfK^\mbfM\cut\delta$ to prove the iterability
of the phalanx $(W,\ult(W,F_{X,Y}),\kappa_Y)$. 

The conclusion (\ref{d.e.bottom}) is established using a copying construction
that generalizes that in the proof of Theorem~\ref{t0.1}(b). More precisely,
letting $\kappa=\kappa_X$, if $\mcU$ is the normal iteration tree on $N$
coming from the coiteration with $W$ as described below (\ref{u.e.s}), we can
``embed" $\mcU\rst\kappa$ into $\mcU$ using a system of maps $\sigma_i$ where 
$\sigma_\alpha:M^\mcU_\alpha\to M^\mcU_\alpha$ is the identity map for
$\alpha<\kappa$, and $\sigma_\kappa:M^\mcU_\kappa\to M^\mcU_\delta$ is the
iteration map $\pi^\mcU_{\kappa,\delta}$. A straightforward, but a bit tedious 
computation shows that we can copy the iteration tree $\mcU$ onto a normal
iteration tree $\mcU'$ of length $\delta+\delta$ extending $\mcU$, where the 
copying construction uses the maps $\sigma_i$ instead of a single map. The
point here is to verify that there are no ``conflicts" 
when we copy, that is, $\mcU'$ is indeed a normal iteration tree. The
situation can be then depicted by the following diagram; we write $N_\alpha$
for $M^\mcU_\alpha$ and $N'_\alpha$ for $M^{\mcU'}_\alpha$.

\begin{displaymath}
\xymatrix{
\mcU' & N  \ar@{.>}[r] & 
                N'_h \ar@{.>}[r]  &
		N'_\kappa \ar@{.>}[r]  &  
                N'_\delta \ar@{.>}[r]  & 
                N'_{\delta+\alpha} \ar@{.>}[r]  &   
	        N'_{\delta+\delta} \\
\mcU & N \ar@{.>}[r] \ar[u]|{\sigma_0=\idm} & 
               N_h \ar@{.>}[r]  \ar[u] |{\sigma_h=\idm} & 
               N_\kappa \ar@{.>}[r] 
               \ar[ur]|{\sigma_\kappa=\pi^\mcU_{\kappa,\delta}} & 
               N_i \ar@{.>}[r] & 
               N_{\kappa+\alpha} \ar@{.>}[r]  \ar[u]|{\sigma_{\kappa+\alpha}}
	       & N_\delta \ar[u]|{\sigma_\delta} \ar[r]|{\pi_{F^*_{X,Y}}} &
	       \tilde{N}\ar[ul]|{\tilde{\sigma}} 
              }
\end{displaymath}

As the iteration indices in $\mcU\rst[\kappa,\delta)$
are larger than $\kappa$, the copy map $\sigma_\delta$ agrees with
$\sigma_\kappa$ on $\ptm(\kappa)\cap N_\kappa=\ptm(\kappa)\cap N_\delta$. It
follows that the two derived extenders agree, and in fact they agree with the
map $\sigma_X\rst(\ptm(\kappa)\cap N_\delta)$. (See the
settings above (\ref{u.e.indep})). So if we pick $Y$ such that $X\in Y$, and
let $\lambda=\kappa_Y$ and $F^*_{X,Y}$ be the $(\kappa,\lambda)$-extender
derived from $\sigma_{X,Y}\rst(\ptm(\kappa)\cap N_\delta)$, we see that
$\tilde{N}=\ult(N_\delta,F^*_{X,Y})$ is well-founded, and the factor map 
$\tilde{\sigma}:\tilde{N}\to N'_{\delta+\delta}$ is the identity
below $\lambda$, and sends $\lambda$ to $\delta$. 

It follows that the pair
$(\idm,\tilde{\sigma})$ is an embedding of the phalanx
$(N'_{\delta+\delta},\tilde{N},\lambda)$ into 
$N'_{\delta+\delta}$, which proves the iterability of the phalanx. This
of course yields the iterability of the phalanx
$(N'_{\delta+\delta}\cut\delta,\tilde{N}\cut\delta,\lambda)$. 
Note that $N'_{\delta+\delta}\cut\delta=N_\delta\cut\delta$. Recall that
$\mcT$ is the iteration tree on the $W$-side of the coiteration of $N$ with
$W$; see again the settings below (\ref{u.e.s}). 
As in the proof of Theorem~\ref{t0.1}(b), we can then show that the phalanx
$(\mbfK^\mbfM\cut\delta,\ult(\mbfK^\mbfM\cut\delta,F_{X,Y}),\lambda)$ can be
elementarily embedded into
$(N'_{\delta+\delta}\cut\delta,\tilde{N}\cut\delta,\lambda)$ using the pair of
maps $(\pi^\mcT_{0,\delta}\rst(\mbfK^\mbfM\cut\delta),k)$ where   
$k:\ult(\mbfK^\mbfM\cut\delta,F_{X,Y})\to\tilde{N}\cut\delta$ is the unique
elementary map that agrees with $\pi^\mcT_{0,\delta}$ below $\lambda$ and
satisfies $k\circ\pi_{X,Y}=\pi^*_{X,Y}\circ\pi^\mcT_{0,\delta}$, and
$\pi^*_{X,Y}$ and $\pi_{X,Y}$ are the ultrapower embeddings coming from
$\ult(N_\delta\cut\delta,F^*_{X,Y})$ and
$\ult(\mbfK^\mbfM\cut\delta,F_{X,Y})$, respectively. This completes
the sketch of the proof of (\ref{d.e.bottom}). 

\subsection{The case $\delta=\omega_1^\mbfM=\omega_1^\mbfV$} 

Although superficially this case may appear to be very special, it is
interesting in many respects. The main difference between this case and the
one discussed in Subsection~\ref{ss.dgo1} 
is that, as we already mentioned in the introduction, we cannot expect
$\mbfK^\mbfM\cut\delta$ to be universal for countable mice, by the result of
Jensen \cite{spmwv}. Suppose we try to prove Conjecture~\ref{d.conj} using the
strategy from Subsection~\ref{ss.dgo1}. By the fact that $\mbfK^\mbfM\cut\delta$
is no longer universal for countable premice, establishing (\ref{d.e.bottom})
is not sufficient to run the argument. It seems that what we would need here
is the variant of (\ref{d.e.bottom}) where $\delta$ is replaced with
$\delta^+$, and this in turn requires to use the coiteration of $W$
against $N$ which has length $\delta^+$. However, the extender $F_{X,Y}$ is
determined already by the coiteration of length $\delta$. If we are in the 
situation where no sharp for an inner model with a strong cardinal exists, all
iteration trees can be made linear, so once the well-foundedness of
$\ult(N_{\delta^+},F^*_{X,Y})$ has been established, one can easily embed
$\ult(\mbfK^\mbfM\cut\delta^+,F_{X,Y})$ into $\ult(N_{\delta^+},F^*_{X,Y})$
since
$\pi^\mcT_{0,\delta^+}=\pi^\mcT_{\delta,\delta^+}\circ\pi^\mcT_{0,\delta}$ and
the critical point of $\pi^\mcT_{\delta,\delta^+}$ is at least $\delta$. In
the situation with many strong cardinals, we have non-linear iteration trees,
so even if we succeeded in running an argument similar to that outlined in
Subsection~\ref{ss.dgo1} and proved that
$(N_{\delta^+},\ult(N_{\delta^+},F^*_{X,Y}),\kappa_Y)$ is 
iterable, the argument for linear iterations for ``pulling" back to
$\mbfK^\mbfM$ we have just mentioned can be mimicked here only if we 
know (among other things) that $\delta$ is on the branch $[0,\delta^+]_\mcT$,
or, more generally, if some sufficiently large element of the set $C^*$
defined below Lemma~\ref{u.l.term} is on that 
branch. It is also easy to see that if $\delta$ is not on the branch
$[0,\delta^+]_\mcT$ then there is a sharp for an inner model with a strong
cardinal, but we do not know if a substantially stronger large cardinal
hypothesis can be extrected in this situation. (For instance an inner model
with two strong cardinals.) 

\subsection{No inner models of ${\mbfM}$ with Woodin cardinals}

Finally, it is natural to wonder whether the anti-large cardinal assumption in
Theorem \ref{t0.1}(a) that there are no inner models with Woodin cardinals can
be localized to~${\mbfM}$. 

In this setting, we do not know if proper class extender models of $\mbfM$ are
iterable in $\mbfV$. We can however prove a weaker form of iterability for
such models, and establish a weak form of our main theorem. The problem is
that this weak form does not seem to suffice to deduce any interesting
applications, so we omit its proof. The precise statement of the result we
have in this case is as follows:  

\begin{theorem}
Assume that $\mbfM$ is a proper class inner model, and $\delta>\omega_1$ is a regular cardinal in 
$\mbfV$ such that $S_\delta$ is stationary. Granting that in $\mbfM$ there is no proper class inner 
model with a Woodin cardinal, the initial segment $\mbfK^{\mbfM}\cut\delta$ is universal for all iterable 
$1$-small premice in $\mbfV$ of cardinality less than $\delta$ in the following weak sense:

If $N$ is a $1$-small iterable premouse in $\mbfV$ of cardinality less than $\delta$, then there is a pair 
of iteration trees $(\mcT,\mcU)$ of length less than $\delta$, where $\mcT$ is on $\mbfK^{\mbfM}\cut
\delta$ and $\mcU$ is on $N$, such that $\mcU$ has a last model $M^\mcU_\infty$, there is no 
truncation on the main branch of $\mcU$, and one of the following holds: 

\begin{itemize} 
\item[(a)] 
The tree $\mcT$ has a last well-founded model $M^\mcT_\infty$, 
and $M^\mcU_\infty$ is an initial segment of $M^\mcT_\infty$. 
\item[(b)] 
The tree $\mcT$ has a limit length, does not have a cofinal 
well-founded branch, and $\delta(\mcT)=\delta(\mcU)$. Moreover, no ordinal larger than $\delta(\mcU)$ 
indexes an extender on $M^\mcU_\infty$ and, in some generic extension of $\mbfV$, there is a cofinal 
branch $b$ through $\mcT$ such that the direct limit along $b$ has well-founded part of length at least 
$\On\cap M^\mcU_\infty$. 
\end{itemize} 
\end{theorem}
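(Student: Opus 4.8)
\emph{Setup and weak iterability.} The plan is to follow the proof of Theorem~\ref{t0.1}.(a) as closely as possible, the only genuinely new ingredient being a replacement for the (unavailable) iterability of $W$ in $\mbfV$. As in that proof, and since $\mbfM$ has no proper class inner model with a Woodin cardinal, I would work in $\mbfM$ to fix a proper class, fully $\mbfM$-iterable, $A_0$-thick extender model $W$ witnessing the $A_0$-soundness of $\mbfK^{\mbfM}\cut\delta^+$; then $W$ is $1$-small and $W\cut\delta=\mbfK^{\mbfM}\cut\delta$. I would then rerun the proof of Lemma~\ref{cmt:l1} for $R=W$, keeping every step except the concluding contradiction --- by the remark following that lemma this is the only place the anti-large-cardinal hypothesis on $\mbfV$ is used, everything else needing merely that $W$ is $1$-small. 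The output is the following \emph{weak iterability} of $W$ in $\mbfV$: whenever $\mcT$ is a normal iteration tree of limit length on a set-sized initial segment of $W$ for which the uniqueness strategy is undefined in $\mbfV$, then $\delta(\mcT)$ is a Woodin cardinal in $\mbfL[E(\mcT)]$, and in the $\col(\omega,\card(\trcl(\mcT)))$-generic extension of $\mbfV$ the tree $\mcT$ acquires cofinal branches whose direct limits are well-founded past any prescribed set ordinal. (There is no clash with $1$-smallness here, as $\mbfL[E(\mcT)]$ need not be $\mbfV$-iterable.)

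\emph{The coiteration.} Given $N$ as in the statement, I would attempt, in $\mbfV$, to coiterate $W$ against $N$ by least disagreement, using $N$'s strategy on the $N$-side and the uniqueness strategy on the $W$-side where defined. Since $\card(N)<\delta$ and $\delta$ is a cardinal, the $N$-iterates occurring before stage $\delta$ have ordinal height below $\delta$, so all iteration indices used before stage $\delta$ lie below $\delta$; in particular any tree produced on the $W$-side before stage $\delta$ is really a tree on $W\cut\delta=\mbfK^{\mbfM}\cut\delta$. A routine absoluteness argument from $\mbfM$ rules out an ill-founded ultrapower at a successor stage, and, since $W$ is a proper class, a set mouse cannot strictly out-last it in a terminal coiteration; so the standard termination analysis (as for Lemma~\ref{u.l.term}) leaves exactly three outcomes: (i) the coiteration terminates with $\mcT,\mcU$ of length $<\delta$, well-founded last models $M^\mcT_\infty,M^\mcU_\infty$, and $M^\mcU_\infty\unlhd M^\mcT_\infty$; (ii) the $N$-side reaches length $\delta+1$, i.e., $N$ iterates past $\mbfK^{\mbfM}\cut\delta$; or (iii) at some limit $\lambda<\delta$ the $W$-side tree $\mcT\rst\lambda$ has no cofinal well-founded branch in $\mbfV$.

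\emph{Reading off the alternatives.} Outcome (ii) is impossible: it is exactly the hypothesis of the proof of Theorem~\ref{t0.1}.(a), whose Part~(A) uses only the coiteration of $W$ against $N$ of $N$-side length $\delta+1$, and whose Part~(B), together with the closing appeal to Lemma~\ref{cmt:abs}, is carried out entirely inside $\mbfM$ and needs only that $\mbfM$ has no proper class inner model with a Woodin cardinal; these yield $F_{X,Y}\in W$, contradicting Lemma~\ref{u.l.coherence}. So outcome~(i) holds or outcome~(iii) holds. In outcome~(i), together with the standard fact that no truncation occurs on the main branch of the losing side, we are in alternative~(a). In outcome~(iii), least-disagreement bookkeeping gives $E(\mcT\rst\lambda)=E(\mcU\rst\lambda)$, hence $M(\mcT\rst\lambda)=M(\mcU\rst\lambda)$ and $\delta(\mcT\rst\lambda)=\delta(\mcU\rst\lambda)=:\eta$; I then set $\mcT=\mcT\rst\lambda$, continue $\mcU\rst\lambda$ by one $N$-strategy branch $d$, and put $M^\mcU_\infty=M^\mcU_d$. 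Since $\mcT$ and $M(\mcT\rst\lambda)$ lie in $\mbfV$, were $M^\mcU_d$ to carry a $Q$-structure over $M(\mcU\rst\lambda)$ that branch-and-structure could be recovered inside $\mbfV$, producing a cofinal well-founded branch of $\mcT$ there against~(iii); hence $\eta$ is Woodin in $M^\mcU_\infty$, and because $M^\mcU_\infty$ is an iterable $1$-small premouse this forces that no ordinal above $\eta$ indexes an extender on it and that $d$ drops nowhere. A putative $\mbfV$-branch $b$ of $\mcT$ would, by the same $1$-smallness/$Q$-structure dichotomy, have to be unique (again against~(iii)) or violate $1$-smallness, so $\mcT$ genuinely has no cofinal well-founded branch in $\mbfV$; and weak iterability supplies, in a generic extension, a cofinal branch of $\mcT$ whose direct limit is well-founded past $\On\cap M^\mcU_\infty$. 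This is alternative~(b).

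\emph{Main obstacle.} The crux is outcome~(iii): squeezing the entire content of alternative~(b) --- $\delta(\mcT)=\delta(\mcU)$, no extender above $\delta(\mcU)$ on $M^\mcU_\infty$, non-dropping of its main branch, true absence of a $\mbfV$-branch for $\mcT$, and a generically available branch of long well-founded direct limit --- out of the bare failure of the uniqueness strategy, via the interaction of the ``no $Q$-structure'' phenomenon with the $1$-smallness of $N$ and of $W$ and with the weak iterability of Step~1. It is precisely this unresolved indeterminacy of $\mcT$ in $\mbfV$ that blocks the applications, which is why we only record the statement.
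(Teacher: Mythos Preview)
The paper explicitly omits the proof of this theorem (``we omit its proof''), so there is nothing to compare your outline against directly. Your plan is the natural one and lines up with what the paper hints at: salvage from the proof of Lemma~\ref{cmt:l1} a weak iterability statement for $W$ in $\mbfV$, run the coiteration as far as it goes, and reuse the machinery of Section~\ref{sect:univ} to kill the case where the coiteration reaches length $\delta+1$. The $Q$-structure analysis you sketch for outcome~(iii) is also the expected one.

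There is, however, a genuine gap in your case split. Your outcome~(ii) is ``the $N$-side reaches length $\delta+1$'', and you dispose of it by invoking Part~(A) of the proof of Theorem~\ref{t0.1}.(a). But that argument needs more than the $N$-side: the club $C^*$ is constructed inside $b^\mcU\cap b^\mcT$, and conditions~(ii) and~(iii) for membership in $C^*$ (see the list just below Lemma~\ref{u.l.term}) as well as the proof of Lemma~\ref{l4} explicitly use the $W$-side branch $b^\mcT$ at stage $\delta$. If the uniqueness strategy happens to succeed at every limit $\lambda<\delta$ but fails precisely at $\delta$, you are simultaneously outside your outcome~(iii) (where you required $\lambda<\delta$) and unable to run Part~(A) as written. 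Nor does this case fall directly under alternative~(b) of the theorem, since the trees now have length $\delta$, not length less than $\delta$. You need either to show this boundary case cannot occur, or to run a modified Part~(A) using a generic branch of $\mcT$ at $\delta$ (noting that the individual maps $\pi^\mcT_{0,\xi}$ for $\xi<\delta$ and the resulting extenders $F_{\xi,\xi'}$ live in $\mbfV$ regardless), while checking that enough of the pressing-down and $S_\delta$-stationarity survives the passage to the extension to push the extender into $\mbfM$.
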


The interest here, of course, would be to replace this weak universality with its genuine version or, if this 
is not possible, to see whether this version can actually be useful. For example, this version seems 
irrelevant for $\Sigma^1_3$-correctness, for which one needs to assume that there are no inner models
with a Woodin cardinal {\em in ${\mbfV}$}.


\end{document}